\title{A Direct Algorithm to Compute the Topological Euler Characteristic and Chern-Schwartz-MacPherson Class of Projective Complete Intersection Varieties}
\author{Martin Helmer$^\dagger$  \\ \normalsize
        Appendix:  Martin Helmer$^\dagger$ and \'Eric Schost$^{\dagger\dagger}$ \\ \tiny
             $^\dagger$Department of Applied Mathematics, University of Western Ontario,  London, Canada. \texttt{martin.helmer2@gmail.com} \vspace{-2mm}\\\tiny $^{\dagger\dagger}$ Cheriton School of Computer Science, University of Waterloo, Waterloo, Canada. \texttt{eric.schost@gmail.com }}   
\date{\today}
 \definecolor{Ftitle}{RGB}{11,46,108}
\definecolor{line}{RGB}{87,39,117}
\definecolor{chameleongreen2}{HTML}{5B1280} 
\colorlet{tableheadcolor}{Ftitle!25} 
\colorlet{tablerowcolor}{gray!10} 
\newcommand{\codim}{\mathrm{codim} }
\newcommand{\Proj}{\mathrm{Proj}}
\newcommand{\ZZ}{ \mathbb{Z}}
\newcommand{\oo}{\mathcal{O}}
\newcommand{\pp}{\mathbb{P}}
\newtheorem{algorithm}{Algorithm}
\numberwithin{algorithm}{section} 
\newtheorem{theorem}{Theorem}[section]
\newtheorem{propn}[theorem]{Proposition}
\newtheorem{corr}[theorem]{Corollary}
\newtheorem{lemma}[theorem]{Lemma}
\newtheorem{defn}[theorem]{Definition}
\newtheorem{example}[theorem]{Example}
\newtheorem{remark}[theorem]{Remark}
\begin{document}
\maketitle
\begin{abstract} \noindent
Let $V$ be a possibly singular scheme-theoretic complete intersection subscheme of $\pp^n$ over an algebraically closed field of characteristic zero. Using a recent result of \textit{Fullwood (``On Milnor classes via invariants of singular subschemes'', Journal of Singularities)} we develop an algorithm to compute the Chern-Schwartz-MacPherson class and Euler characteristic of $V$. This algorithm complements existing algorithms by providing performance improvements in the computation of the Chern-Schwartz-MacPherson class and Euler characteristic for certain types of complete intersection subschemes of $\pp^n$. 

\end{abstract}
\section{Introduction}
Beginning with Euler's Polyhedral Formula (circa 1750) the Euler characteristic has developed into an important invariant for the study of topology and geometry in a wide variety of settings. In addition to providing a mechanism to enable the classification of orientable surfaces, the Euler characteristic is an important component in many results in geometry. More recently several authors have noted applications of the Euler characteristic of projective varieties to problems in statistics and physics. Specifically the Euler characteristic is used for problems of maximum likelihood estimation in algebraic statistics by Huh in \cite{huh2012maximum} as well as in string theory by Aluffi and Esole in \cite{aluffi2010new} and by Collinucci, Denef, and Esole in \cite{collinucci2009d}. 

Let $V$ be a subscheme of a projective space $\pp^n$ (over an algebraically closed field of characteristic zero $k$). One of the first computational approaches to the calculate the Euler characteristic of $V$, $\chi(V)$, was to do so by computing Hodge numbers and using the fact that the Euler characteristic is an alternating sum of Hodge numbers. This approach is implemented in Macaulay2 \cite{M2} as the function euler, where the Hodge numbers are found by computing the ranks of the appropriate cohomology rings. This approach, however, has significant drawbacks in both applicability and performance. Specifically, this method is only applicable for smooth subschemes and the computation of Hodge numbers is computationally expensive.

Alternatively, one may obtain the Euler characteristic of $V$ directly from the Chern-Schwartz-MacPherson class of $V$, $c_{SM}(V)$. In particular, when we consider $c_{SM}(V)$ as an element of the Chow ring of $\pp^n$, $A^*(\pp^n)$, we have that $\chi(V)$ is equal to the zero dimensional component of $c_{SM}(V)$. This is the method we shall use to obtain the Euler characteristic. This technique has been used by several authors (e.g.\ \cite{aluffi2003computing}, \cite{jost2013algorithm}, \cite{helmer2014algorithm}) to construct different algorithms which are capable of calculating Euler characteristics of complex projective varieties. These previous methods will be discussed below. 

In addition to containing the Euler characteristic, $c_{SM}$ classes are an important invariant in algebraic geometry, providing a generalization of the Chern class to singular schemes. While there are several other generalizations of the Chern class to singular schemes (i.e.\ the Chern-Fulton and Chern-Fulton-Johnson classes, see \cite{aluffi2005characteristic} for a discussion of these), the $c_{SM}$ class is the only generalization which preserves the relation between Chern classes and the Euler characteristic. Additionally the $c_{SM}$ class has unique functorial properties (see Def.\ \ref{defn:csm_natural_transform}) and relationships to other common invariants. The $c_{SM}$ class has also found direct applications to problems from string theory in physics, see for example Aluffi and Esole \cite{aluffi2009cherntadpole}. 


The Chow ring of the projective space $\pp^n$ may be expressed as the quotient ring $A^*(\pp^n)=\ZZ[h]/(h^{n+1})$ where $h$ is the rational equivalence class of a general hyperplane in $\pp^n$. Consider the hypersurface $V(f) \subset \pp^n$ defined by the homogeneous polynomial $f$. All previous methods to compute $c_{SM}(V(f))$ employ Theorem 2.1 of Aluffi \cite{aluffi2003computing}, which may be expressed as \begin{equation}
c_{SM}(V(f))=(1+h)^{n+1}-\sum_{j=0}^{n}g_j (-h)^j(1+h)^{n-j} \mathrm{\; in \;} A^*(\pp^n) \cong \ZZ[h]/(h^{n+1}), \label{eq:csm_hyper}
\end{equation} where $g_i \in \ZZ$ are integers which may be understood several different ways. In fact, the differences between these methods to compute $c_{SM}$ classes lay in how the $g_j$'s are understood and computed. The first algorithm to compute $c_{SM}(V(f))$ was that of Aluffi \cite{aluffi2003computing}. To compute the $g_j$'s this algorithm requires the computation of the blowup of $\pp^n$ along the singularity subscheme of $V(f)$ (that is the scheme defined by the partial derivatives of $f$). Hence the cost of computing the $c_{SM}$ class of a hypersurface using the method of Aluffi is that of computing the Rees algebra of the ideal defining the singularity subscheme of the hypersurface. This can be a quite expensive operation, making this algorithm impractical for many examples. 

Another algorithm to compute the $c_{SM}$ class of a hypersurface was given by Jost in \cite{jost2013algorithm}. This method makes use of Fulton's residual intersection theorem (Theorem 9.2 of Fulton \cite{fulton}) which allows Jost to consider the $g_j$'s in (\ref{eq:csm_hyper}) as the degrees of Fulton's residual scheme. Jost also shows that in the context of $c_{SM}$ (and Segre) class computations these residual schemes can be computed by finding a particular saturation. Hence the computation of the saturation to find the residual scheme and the computation of its degree are the main costs of Jost's algorithm. The algorithm of Jost is probabilistic and yields the correct result for a choice of objects lying in an open dense Zariski set of the corresponding parameter space, see Jost \cite{jost2013algorithm} or Eklund, Jost, and Peterson \cite{Jost}.

In \cite{helmer2014algorithm}, the author of this note considers the $g_j$'s as the projective degrees of a rational map defined by the partial derivatives of $f$ and gives a method to compute these projective degrees by finding the degree of a certain zero dimensional ideal (see Theorem \ref{theorem:projective_deg_theorem} below). The method given in \cite{helmer2014algorithm} to compute the projective degrees is probabilistic and yields the correct result for a choice of objects lying in an open dense Zariski set of the corresponding parameter space. This method is implemented in \cite{helmer2014algorithm} using both Gr\"{o}bner bases methods and polynomial homotopy continuation (via Bertini \cite{Bertini} and PHCpack \cite{PHCpack}); it provides a performance improvement over previous methods in many cases. A detailed comparison of these methods can be found in \cite{helmer2014algorithm}. 

For $V$ a possibly singular subscheme of $\pp^n$ all these methods require the use of the inclusion-exclusion property of $c_{SM}$ classes when $V$ has codimension higher than one. Specifically for $V_1,V_2 $ subschemes of $\pp^n$ the inclusion-exclusion property for $c_{SM}$ classes states \begin{equation}
c_{SM}(V_1 \cap V_2)=c_{SM}(V_1)+c_{SM}(V_2)-c_{SM}(V_1 \cup V_2).
\label{eq:csm_inclusion_exclusion}
\end{equation} From this we may directly deduce the following. \begin{propn}
Let $	V$ be a subscheme of $\pp^n$. Write the polynomials defining $V$ as $F=(f_1,\dots , f_m)$ and let $F_{ \left\lbrace S \right\rbrace } = \prod_{i \in S} f_i $ for $S \subset \left\lbrace 1, \dots , m\right\rbrace$. Then, $$
c_{SM}(V)= \sum_{S \subset \left\lbrace 1, \dots , m\right\rbrace} (-1)^{|S|+1}c_{SM} \left(V( F_{\left\lbrace S \right\rbrace } )\right),
$$  where $|S|$ denotes the cardinality of the integer set $S$. \label{propn:csm_higher_codim}
\end{propn}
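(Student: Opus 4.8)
The plan is to prove this by induction on $m$, the number of defining polynomials, using the inclusion-exclusion property (\ref{eq:csm_inclusion_exclusion}) as the engine. The base case $m=1$ is trivial: the only nonempty subset is $S=\{1\}$, giving $c_{SM}(V)=c_{SM}(V(f_1))$, which is exactly the claim since $V=V(f_1)$ in that case. For the inductive step I would set $V=V(f_1,\dots,f_m)$, write $W=V(f_1,\dots,f_{m-1})$ and $V(f_m)$ for the last hypersurface, so that $V=W\cap V(f_m)$ as schemes. Applying (\ref{eq:csm_inclusion_exclusion}) with $V_1=W$ and $V_2=V(f_m)$ gives
\begin{equation}
c_{SM}(V)=c_{SM}(W)+c_{SM}(V(f_m))-c_{SM}(W\cup V(f_m)). \label{eq:plan_step}
\end{equation}

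Next I would observe that $W\cup V(f_m)$ is cut out set-theoretically — and, more to the point for $c_{SM}$, in the sense needed — by the products $f_i f_m$ for $i=1,\dots,m-1$; that is, $W\cup V(f_m)=V(f_1 f_m, f_2 f_m,\dots,f_{m-1}f_m)$, a scheme defined by $m-1$ polynomials. Both $W$ and $W\cup V(f_m)$ are then subschemes defined by $m-1$ polynomials, so the induction hypothesis applies to each: to $W$ with the tuple $(f_1,\dots,f_{m-1})$, and to $W\cup V(f_m)$ with the tuple $(f_1 f_m,\dots,f_{m-1}f_m)$. Expanding $c_{SM}(W)$ gives a sum over subsets $S\subseteq\{1,\dots,m-1\}$ of terms $(-1)^{|S|+1}c_{SM}(V(F_{\{S\}}))$, i.e.\ exactly the terms in the desired formula indexed by subsets of $\{1,\dots,m\}$ that do not contain $m$. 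Expanding $c_{SM}(W\cup V(f_m))$ gives a sum over subsets $S\subseteq\{1,\dots,m-1\}$ of terms $(-1)^{|S|+1}c_{SM}(V(\prod_{i\in S}f_i f_m))=(-1)^{|S|+1}c_{SM}(V(F_{\{S\cup\{m\}\}}))$; together with the standalone term $c_{SM}(V(f_m))$ (which is the $S=\{m\}$ term) and the sign flip from $-c_{SM}(W\cup V(f_m))$ in (\ref{eq:plan_step}), these account for precisely the terms indexed by subsets of $\{1,\dots,m\}$ that do contain $m$, each with the correct sign $(-1)^{|S|+1}$. Combining the two families yields the full sum over all nonempty $S\subseteq\{1,\dots,m\}$.

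The one point that needs genuine care — and which I expect to be the main obstacle — is justifying that the scheme structures behave correctly, so that inclusion-exclusion and the induction hypothesis genuinely apply. The $c_{SM}$ class of a scheme depends only on its support (it is defined via constructible functions on the underlying variety), so the identities $V=W\cap V(f_m)$ and $W\cup V(f_m)=V(f_1f_m,\dots,f_{m-1}f_m)$ need only hold at the level of closed subsets of $\pp^n$, which is elementary; one must simply note that $c_{SM}$ is insensitive to the possibly non-reduced structure and that a union of two subschemes $V(g_1,\dots,g_a)\cup V(h)$ has support $V(g_1h,\dots,g_ah)$. A secondary bookkeeping task is the bijection between nonempty subsets of $\{1,\dots,m\}$ and the disjoint union of (nonempty subsets of $\{1,\dots,m-1\}$), (the singleton $\{m\}$), and (sets $S\cup\{m\}$ with $\emptyset\neq S\subseteq\{1,\dots,m-1\}$), together with the check that the sign $(-1)^{|S|+1}$ is preserved under $S\mapsto S\cup\{m\}$ after the extra minus sign from (\ref{eq:plan_step}) is absorbed — a routine but necessary verification. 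Once these are in place the argument closes immediately.
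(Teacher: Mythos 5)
Your induction is correct and is exactly how the paper intends Proposition \ref{propn:csm_higher_codim} to follow from the two-set inclusion-exclusion identity \eqref{eq:csm_inclusion_exclusion} (the paper simply asserts the deduction is direct). The only delicate point, that $\prod_{i\in S}(f_i f_m)$ equals $F_{\{S\cup\{m\}\}}$ only up to the extra factor $f_m^{|S|-1}$ and that the relevant unions need only be identified set-theoretically, is properly handled by your observation that $c_{SM}(V)=c_{SM}(V_{red})$ depends only on the support.
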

While the use of this property allows for the computation of $c_{SM}(V)$ for $V$ of any codimension, it requires exponentially many $c_{SM}$ computations relative to the number of generators of $I$. Additionally some of the schemes considered while performing inclusion-exclusion may have significantly higher degree than the original scheme $V$. 

Below we discuss an algorithm that will allow for the direct computation of the $c_{SM}$ classes of arbitrary, possibly singular, globally complete intersection subschemes of $\pp^n$ defined by a homogeneous polynomial ideal $ I=(f_0,\dots,f_m)$ where the scheme defined by $(f_0,\dots ,f_{m-1})$ is smooth (allowing for a possible rearrangement of the generators of $I$). We also give an extension of this method to all globally complete intersection subschemes of $\pp^n$ via a form of the inclusion-exclusion property of $c_{SM}$ classes which considered only the generators of $I$ which define a singular subscheme of $\pp^n$. This new method can be implemented symbolically using Gr\"{o}bner bases methods or numerically using polynomial homotopy continuation via a package such as Bertini \cite{Bertini}. We see that this new method complements existing methods for computing $c_{SM}$ classes by providing performance improvements, particularly when the input ideal has relatively few generators which define singular schemes.   

In Section \ref{section:background} we review several important definitions which will be used throughout this note. In particular we define several different characteristics classes including the Segre and Chern-Schwartz-MacPherson classes and explore more closely the relationships between the $c_{SM}$ class and the Euler characteristic using a recent result of Aluffi \cite{aluffi2013euler}. 

In Section \ref{section:algorithm} we give a new expression for the $c_{SM}$ class of a complete intersection subscheme $V(f_0,\dots,f_m)$ of $\pp^n$ such that $V(f_0,\dots,f_{m-1})$ is smooth in Theorem \ref{theorem:MainTheorem}. This result is based on an expression for the Milnor class of a scheme of this type due to Fullwood \cite{fullwood2014milnor}. This expression allows us to state an algorithm to compute the $c_{SM}(V)$ for a complete intersection $V$ in $\pp^n$. This new algorithm offers performance improvements over the standard inclusion-exclusion method when only a few of the generators of the ideal defining the scheme $V$ are singular. We give some running time results for this method in Table \ref{table:csmResults} and Table \ref{table:csmResultsFiniteFeild}. The algorithms resulting from Theorem \ref{theorem:MainTheorem} are presented in Algorithms \ref{algorithm:mainAlg} and \ref{algorithm:hybrid}. 

We note that Algorithms \ref{algorithm:mainAlg} and \ref{algorithm:hybrid} constructed below are probabilistic algorithms. This is due to the fact that these algorithms make use of Algorithm 2 of the author \cite{helmer2014algorithm} which is a probabilistic algorithm to compute the Segre class of a subscheme of projective space. In \S\ref{subsection:probabilityTheoretic} we obtain a theoretical bound on the probability that Algorithm 2 of \cite{helmer2014algorithm} (and hence Algorithm \ref{algorithm:mainAlg} below) will give a correct result when working over a finite parameter space. In \S\ref{subsection:probExper} we consider the probability of success for the Segre class computations using Algorithm 2 of \cite{helmer2014algorithm} (and hence the $c_{SM}$ class computations of Algorithm \ref{algorithm:mainAlg} below) when preformed on several examples using our test computation environment.  

The Macaulay2 \cite{M2} and Sage \cite{sage} implementations of Algorithms \ref{algorithm:mainAlg} and \ref{algorithm:hybrid} (as well as the implementations of Algorithms 1, 2 and 3 of the author \cite{helmer2014algorithm}) used for testing in \S\ref{subsection:Runing_time_comparison} can be found online at \url{https://github.com/Martin-Helmer/char-class-calc}. 


\section{Background}
\label{section:background}
In this section we review the definitions of the characteristic classes that will be needed to describe the algorithm presented in Section \ref{section:algorithm}. In particular we give the definition of the Chern-Shwartz-MacPherson class in Definition \ref{defn:csm_natural_transform} and discuss its relationship with the Euler characteristic. We also define the Segre class in (\ref{eq:segre_def}), and the Chern-Fulton-Johnson class in (\ref{eq:fulton_johnson_def}). 

The algorithm given in Section \ref{section:algorithm} will rely on an expression due to Aluffi \cite{aluffi2003computing} for the Segre class in terms of the projective degrees of a rational map. We give this relation in Proposition \ref{prop:SegreProjectiveDegreeAluffi} and give the definition of the projective degrees of a rational map in \eqref{eq:projective_degrees_def}. In Theorem \ref{theorem:projective_deg_theorem} we give a result of the author's \cite{helmer2014algorithm} which provides a means to compute the projective degrees using a computer algebra system. 

All characteristics classes considered here will be understood to be elements of some Chow ring. We will express the Chow ring of a $n$-dimensional nonsingular variety $M$ as $$A^*(M)=\bigoplus_{i=0}^n A^{i}(M),$$ where $A^{\ell}(M)$ is the Chow group of $M$ having codimension $\ell$ in $M$, that is $A^{\ell}(M) $ is the group of codimension $\ell$-cycles modulo rational equivalence. Where convenient we will also write $A_{j}(M)$ for the Chow group of dimension $j$, that is the group of dimension $j$-cycles modulo rational equivalence. All computations of characteristic classes will take place in the Chow ring of $\pp^n$, $A^{*}(\pp^n)$. Recall that $A^*(\pp^n)\cong \ZZ[h]/(h^{n+1})$ where $h=c_1\left( \oo_{\pp^n}(1) \right)$ is the rational equivalence class of a hyperplane in $\pp^n$ ($c_1$ denotes the first Chern class), so that a hypersurface $W$ of degree $d$ will be represented by $[W]=d\cdot h$ in $A^*(\pp^n)$. For more details see Fulton \cite{fulton}.

Given $V$ a proper closed subscheme of a variety $W$, the Segre class of $V$ in $W$ may be expressed as \begin{equation}
s(V,W)= \sum_{j\geq 1}(-1)^{j-1}\eta_*(\tilde{V}^j), \label{eq:segre_def}
\end{equation}
 where $\tilde{V}$ is the exceptional divisor of the blow-up of $W$ along $V$, $\eta: \tilde{V} \to V$ is the projection and the class $\tilde{V}^k$ is the $k$-th self intersection of $\tilde{V}$. Thus $\eta_*(\tilde{V}^j)$ denotes the pushforward of the $j$-th self intersection class of the exceptional divisor $\tilde{V}$ in the Chow ring $A^*(\tilde{V})$ to the Chow ring $A^*(V)$. For a more detailed description, see Fulton \cite[\S 4.2.2]{fulton}. We also note that in all cases considered here we will have $W=\pp^n$, allowing us to use the more concrete expression for the Segre class given in Proposition \ref{prop:SegreProjectiveDegreeAluffi}.

For a smooth scheme $X $ let $T_X$ denote the tangent bundle to $X$. For a vector bundle $E$ on $X$ let $c(E)$ denote the total Chern class of $E$, see Fulton \cite[\S 3.2]{fulton}. We will write $c(X)=c(T_X) \cap [X]$ for the total Chern class of $X$ in the Chow ring of $X$, $A^*(X)$. As a consequence of the Hirzebruch-Riemann-Roch theorem, we have that the degree of the zero dimensional component of the total Chern class of a smooth projective variety is equal to the Euler characteristic, that is \begin{equation}
\int c(T_X) \cap [X]=\chi(X). \label{eq:chern_euler_non_singular}
\end{equation}Here $\int \alpha$ denotes the degree of the zero dimensional component of the class $\alpha \in A^*(X)$, i.e.\ the degree of the part of $\alpha$ in the dimension zero Chow group $A_0(X)$. Note that we will frequently abuse notation and, given a scheme $V$ in $\pp^n$ we will write $c(V)$, $s(V,\pp^n)$ and $c_{SM}(V)$ for the pushforwards to $\pp^n$ of each characteristic class, i.e.\ we will consider the various characteristic classes as their pushforwards in $A^*(\pp^n)$ rather than in $A^*(V)$.

There exist several different generalizations of the total Chern class to singular schemes and all of these notions agree with $c(T_V) \cap [V]$ for nonsingular $V$. The Chern-Schwartz-Macpherson class is, however, unique in the sense that it is the only generalization which satisfies a property analogous to (\ref{eq:chern_euler_non_singular}) for any $V$, i.e. \begin{equation}
\int c_{SM}(V)=\chi(V). \label{eq:csm_euler}
\end{equation} A recent result of Aluffi \cite{aluffi2013euler}, which we illustrate in Example \ref{ex:AluffiInvolution}, shows that the $c_{SM}$ class has an even stronger relation to the Euler characteristic in the case of projective varieties. 

We now briefly review the definition of the $c_{SM}$ class, given in the manner of MacPherson \cite{macpherson1974chern}. For a scheme $V$, denote by $\mathcal{C}(V )$ the abelian group of finite linear combinations $\sum_W m_W \mathfrak{1}_W$, with the $W$ being (closed) subvarieties of $V$, and $m_W \in \ZZ$; $\mathfrak{1}_W$ denotes the function that is $1$ in $W$, and $0$ outside of $W$. We refer to elements $f\in \mathcal{C}(V )$ as constructible functions and write $\mathcal{C}(V )$ for the group of constructible functions on $V$. $\mathcal{C}$ can be turned into a functor by letting $\mathcal{C}$ map a scheme $V$ to the group of constructible functions on $V$ and map a proper morphism $f: V_1 \to V_2$  to $$\mathcal{C}(f)(\mathfrak{1}_W)(p)=\chi(f^{-1}(p) \cap W), \;\;\; W \subset V_1, \; p\in V_2 \; \mathrm{a \; closed \; point}.$$ 

The Chow group functor $\mathcal{A}_*$ is also a functor from algebraic varieties to Abelian groups. The $c_{SM}$ class may be realized as a natural transformation between these two functors.   
\begin{defn}
The Chern-Schwartz-MacPherson class is the unique natural transformation between the constructible function functor and the Chow group functor, that is $c_{SM}: \mathcal{C}\to \mathcal{A}_*$ is the unique natural transformation satisfying: \begin{itemize}
\item (\textit{Normalization}) $ c_{SM}(\mathfrak{1}_V)=c(T_V) \cap [V] $ for $V $ non-singular  and complete.
\item (\textit{Naturality}) $f_{*}(c_{SM}(\phi))=c_{SM}(\mathcal{C}(f)(\phi))$, for $f:X \to Y$ a proper transform of projective varieties, $\phi$ a constructible function on $X$. 
\end{itemize} \label{defn:csm_natural_transform}
\end{defn}
For a scheme $V$ let $V_{red}$ denote the support of $V$. The notation $c_{SM}(V)$ is taken to mean $c_{SM}(\mathfrak{1}_V)$ and hence, since $\mathfrak{1}_V=\mathfrak{1}_{V_{red}} $, we have $c_{SM}(V)=c_{SM}(V_{red})$. 

When $V $ is a subscheme of $\pp^n$ the class $c_{SM}(V)$ can, in a sense, be thought of as a more refined version of the Euler characteristic since it in fact contains the Euler characteristics of $V$ and those of general linear sections of $V$ for each codimension. Specifically, if $\dim(V)=m$, starting from $c_{SM}(V)$ we may directly obtain the list of invariants $$\chi(V),\chi(V \cap L_1), \chi(V \cap L_1\cap L_2), \dots,\chi(V \cap L_1\cap \cdots \cap  L_m) $$ where $L_1,\dots, L_m$ are general hyperplanes. Conversely from the list of Euler characteristics above we could obtain $c_{SM}(V)$, i.e. there exists an involution between the Euler characteristics of general linear sections and the $c_{SM}$ class in this setting. This relationship is given explicitly in Theorem 1.1 of Aluffi \cite{aluffi2013euler}; we give an example of this below. 

\begin{example} 
Consider $V=V(x_0x_3-x_1x_2)$ in $\pp^3=\Proj(k[x_0,\dots,x_3])$ which is the variety defined by image of the Segre embedding $\pp^1 \times \pp^1 \to \pp^3$. We may compute $c_{SM}(V)=4h^3+4h^2+2h$ and obtain the Euler characteristics of the general linear sections using an involution formula given by Aluffi in \cite{aluffi2013euler}, specifically:
\begin{itemize}
\item First consider the polynomial $p(t)= 4+4t+2t^2 \in \ZZ[t]/(t^4) $ given by the coefficients of the $c_{SM}$ class above.
\item Next apply Aluffi's involution $$p(t) \mapsto \mathcal{I}(p):= \frac{t \cdot p(-t-1)+p(0)}{t+1} =2t^2-2t+4.$$ 
\end{itemize} This gives $\chi(V)=4,$ $\chi(V\cap L_1)=2,$ and $\chi(V\cap L_1 \cap L_2)=2.$ \label{ex:AluffiInvolution}
\end{example}

We will also make use of another generalization of the total Chern class to singular schemes called the Chern-Fulton-Johnson class and denoted $c_{FJ}$. For simplicity we will give the definition of $c_{FJ}$ only for the case where $X$ is a closed locally complete intersection subscheme of a smooth ambient variety $M$, since this will be sufficient for our purposes in this note. For a complete definition and an excellent discussion of the Chern-Fulton-Johnson classes and other related notions see Aluffi \cite{aluffi2005characteristic}. Let $X$ be a closed locally complete intersection subscheme of a smooth ambient variety $M$ and let $T_M$ denote the tangent bundle of $M$, define 
\begin{equation}
c_{FJ}(X)=c(T_M) \cap s(X,M) \label{eq:fulton_johnson_def}.
\end{equation} Also note that since we assume that $X$ is a locally complete intersection (meaning there exists a regular embedding $i:X\to M$) then by Proposition 4.1 of Fulton \cite{fulton} we have $$
c_{FJ}(X)=c(T_M) \cap s(X,M)= c(T_M) \cap \left( c(N_XM)^{-1} \cap [X] \right).
$$ Here $N_XM$ is the normal bundle to $X$ in $M$ (that is the vector bundle with sheaf of sections $\left(\mathcal{I}/\mathcal{I}^2 \right)$ where $\mathcal{I}$ is the ideal sheaf of $X$). Finally, let $V$ be a subscheme of $M$; we define the Milnor class of $V$ as \begin{equation}
\mathcal{M}(V)=(-1)^{\codim(V)}(c_{FJ}(V)-c_{SM}(V)). \label{eq:milnor_class_def}
\end{equation} Note that other sign conventions may be used in definition of the Milnor class, we use the sign convention used by \cite{fullwood2014milnor}, see Fullwood \cite{fullwood2014milnor} or Aluffi \cite{aluffi2005characteristic} for more details. 

All algorithms considered in this note will make use of the so-called projective degrees of a rational map to compute characteristics classes. We recall the definition of projective degrees below. Consider a rational map $\phi : \pp^n \dashrightarrow \pp^m$. In the manner of  Harris (Example 19.4 of \cite{harris1992algebraic}) we may define the \textit{projective degrees} of the rational map $\phi$ as a list of integers $(g_0, \dots , g_n)$ where \begin{equation}
g_i=\mathrm{card} \left(\phi^{-1} \left( \pp^{ m-i} \right) \cap \pp^{i} \right). \label{eq:projective_degrees_def}
\end{equation}
 Here $\pp^{ m-i} \subset  \pp^m$ and $\pp^{i} \subset \pp^n$ are general hyperplanes of dimension $m-i$ and $i$ respectively and $\mathrm{card}$ is the cardinality of a zero dimensional set. Note that points in $\left(\phi^{-1} \left( \pp^{ m-i} \right) \cap \pp^{i} \right)$ will have multiplicity one (this follows from the Bertini theorem of Sommese and Wampler \cite[\S A.8.7]{sommese2005numerical}).

To compute the projective degrees $g_i$ we may apply Theorem \ref{theorem:projective_deg_theorem} below. This computation is probabilistic and yields the correct result for a choice of objects lying in an open dense Zariski set of the corresponding parameter space.

\begin{theorem}[Theorem 4.1 of \cite{helmer2014algorithm}]
Let $I=(f_0,\dots,f_m)$ be a homogeneous ideal in $k[x_0, \dots , x_n]$ defining an $r $-dimensional scheme $V=V(I)$, and assume, without loss of generality that all the polynomials $f_i$ generating $I$ have the same degree.  The projective degrees $(g_0,\dots , g_n)$ of $ \phi : \pp^n  \dashrightarrow \pp^m$,  \begin{equation}
\phi:p \mapsto \left( f_0(p): \cdots : f_m(p) \right), \label{eq:rational_map_of_an_ideal}
\end{equation}are given by \begin{equation}
g_i= \dim_k \left( k[x_0, \dots , x_n,T]/(P_1 +\cdots +P_{i}+L_1+\cdots +L_{n-i}+L_A+S)\right).
\end{equation} Here $P_{\ell},L_{\ell},L_A$ and $S$ are ideals in $k[x_0,\dots , x_n,T]$  with \begin{align*}
P_{\ell}& = \left( \sum_{j=0}^m \lambda_{\ell,j} f_j\right), \;\;\; \lambda_{\ell,j} \mathrm{\; a \; general \; scalar \; in \;}k,\; \ell = 1,\dots,n,\\
S& =  \left( 1-T \cdot \sum_{j=0}^m \vartheta_{j} f_j \right), \;\;\; \vartheta_{j} \mathrm{\; a \; general \; scalar \; in \;}k,\\
L_{\ell}&= \left( \sum_{j=0}^n \mu_{\ell,j} x_j \right), \;\;\; \mu_{\ell,j} \mathrm{\; a \; general \; scalar \; in \;}k,\; \ell = 1,\dots,n,\\
L_{A}&= \left(1- \sum_{j=0}^n \nu_{j} x_j \right), \;\;\; \nu_{j} \mathrm{\; a \; general \; scalar \; in \;}k.\\
\end{align*}\label{theorem:projective_deg_theorem} 
Additionally $g_0=1$.  
\end{theorem}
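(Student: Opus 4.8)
The plan is to interpret each projective degree $g_i = \mathrm{card}\big(\phi^{-1}(\pp^{m-i}) \cap \pp^i\big)$ as the number of points in an explicit zero-dimensional affine variety, and then to translate that count into a $k$-vector-space dimension of a coordinate ring. First I would unwind the definition of $\phi$ in \eqref{eq:rational_map_of_an_ideal}: a general linear subspace $\pp^{m-i} \subset \pp^m$ is cut out by $i$ general hyperplanes of $\pp^m$, so $\phi^{-1}(\pp^{m-i})$ is the locus where $i$ general linear combinations $\sum_j \lambda_{\ell,j} f_j$ vanish --- this is exactly $V(P_1 + \cdots + P_i)$ away from the base locus $V(I)$. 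Similarly $\pp^i \subset \pp^n$ is cut out by $n-i$ general hyperplanes of $\pp^n$, giving $V(L_1 + \cdots + L_{n-i})$. Thus set-theoretically $\phi^{-1}(\pp^{m-i}) \cap \pp^i = V(P_1 + \cdots + P_i + L_1 + \cdots + L_{n-i}) \setminus V(I)$ inside $\pp^n$. By the Bertini-type statement cited from \cite[\S A.8.7]{sommese2005numerical}, for a general choice of the linear data this intersection is a finite set of reduced points, each of multiplicity one, and $g_i$ is its cardinality.

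Next I would pass from $\pp^n$ to an affine chart and remove the base locus. The ideal $S = (1 - T\cdot\sum_j \vartheta_j f_j)$ in the extra variable $T$ is the standard Rabinowitsch trick: in $k[x_0,\dots,x_n,T]$, the vanishing locus of $S$ together with the other ideals consists of those points of $V(P_1+\cdots+P_i+L_1+\cdots+L_{n-i})$ at which $\sum_j \vartheta_j f_j \neq 0$; for general $\vartheta_j$ this condition excludes precisely the points of $V(I)$ (a general linear combination of the $f_j$ does not vanish at a point outside the common zero locus), so $V(S + \sum P_\ell + \sum L_\ell)$ is in bijection with $\phi^{-1}(\pp^{m-i}) \cap \pp^i$, now realized as a cone in affine space. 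Adding the affine-patch equation $L_A = (1 - \sum_j \nu_j x_j)$ for general $\nu_j$ slices each punctured line of the cone in exactly one point, so $V(P_1 + \cdots + P_i + L_1 + \cdots + L_{n-i} + L_A + S)$ is a reduced zero-dimensional affine scheme with exactly $g_i$ points. The final step is the standard fact that for a zero-dimensional ideal $J \subseteq k[y_1,\dots,y_N]$ one has $\dim_k\big(k[y_1,\dots,y_N]/J\big) = \sum_{p} \dim_k \mathcal{O}_{V(J),p}$, and when $V(J)$ is reduced each local ring is just $k$, so the dimension equals the number of points, which is $g_i$. The case $g_0 = 1$ is immediate: $g_0 = \mathrm{card}(\phi^{-1}(\pp^m) \cap \pp^0) = \mathrm{card}(\pp^0) = 1$ since $\phi$ is defined at a general point.

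The main obstacle, and the part deserving genuine care, is making precise the claim that a general choice of the scalars $\lambda_{\ell,j}, \mu_{\ell,j}, \vartheta_j, \nu_j$ simultaneously achieves all the required genericity: that the $i$ forms $\sum_j \lambda_{\ell,j} f_j$ cut $\phi^{-1}$ of a general linear space (not merely \emph{a} linear space of the right dimension --- one must check the $\lambda$'s being general in $k$ suffices, via the parametrization of linear subspaces by matrices of scalars), that the combined intersection with the general $L_\ell$ is transverse and reduced so Bertini applies, that $\sum_j \vartheta_j f_j$ vanishes on no point of the finite set $\phi^{-1}(\pp^{m-i})\cap \pp^i$ lying off $V(I)$ while killing all of $V(I)$, and that $L_A$ meets each residual line once. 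Each of these is a nonempty-Zariski-open condition on the parameter space of scalar tuples, and a finite intersection of nonempty Zariski-open sets is nonempty Zariski-open; assembling these conditions carefully --- and citing Bertini in the form needed, together with the fact that the base locus $V(I)$ contributes no points to the affine count because of the $S$ equation --- is where the real work lies. I would organize this as a sequence of lemmas, one per genericity condition, and then intersect.
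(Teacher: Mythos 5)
Your outline is correct, and it is essentially the classical argument behind Theorem \ref{theorem:projective_deg_theorem} as proved in the cited source \cite{helmer2014algorithm}: pull back $i$ general hyperplanes of $\pp^m$ via the $f_j$, cut with $n-i$ general hyperplanes of $\pp^n$, use Bertini to get finitely many reduced points, remove the base locus with the Rabinowitsch-type equation $S$, dehomogenize with $L_A$, and identify $\dim_k$ of the resulting zero-dimensional quotient with the number of points. Note, though, that the present paper does not reprove the statement in its main text; the proof-level content it does contain is Appendix \ref{appendix:explicitRatMap}, whose Proposition \ref{propn:projectiveDegreeExplicit} establishes the key counting fact---that for suitable $(\ell,\hat{\ell})$ the system cut out by the $L_\ell$'s and $P_\ell$'s has exactly $g_i$ solutions off $V(f_0,\dots,f_m)$---by a genuinely different and more quantitative route: it works with the closure $\Gamma$ of the graph of $\varphi$ in $\pp^n\times\pp^m$, builds incidence varieties $\Phi_\Gamma$ and $\Phi_\Sigma$ over the parameter space of scalars, and combines a characteristic-robust Sard lemma, the constancy of fiber cardinality over regular values (Lemma \ref{lemma:fiber}), an explicit Jacobian criterion, and Heintz--B\'ezout degree bounds to produce a polynomial $F(L,\hat L)$ of degree at most $(m+n+1)2^n(d+1)^{m+1}$ whose nonvanishing certifies the count. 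Your route buys simplicity and fully suffices for the statement as phrased, since each requirement is a dense Zariski-open condition and finitely many such conditions intersect densely; the appendix's route buys the explicit degree bound on the bad locus, which is exactly what feeds the Schwartz--Zippel estimate of Proposition \ref{propn:prob_analysis} and cannot be extracted from a purely qualitative Bertini argument. Two small points of precision in your sketch, neither a gap: the locus removed by $S$ is the hypersurface $V\bigl(\sum_j\vartheta_jf_j\bigr)$, which strictly contains $V(I)$, so the correct formulation is the one you give parenthetically (for general $\vartheta$ it contains $V(I)$ and misses each of the finitely many intersection points off $V(I)$); and for $L_A$ one must also require that no such point lies on the hyperplane $\sum_j\nu_jx_j=0$, since otherwise the corresponding line of the cone would be missed entirely---a condition you do list among your genericity lemmas.
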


Finally we give an expression due to Aluffi \cite{aluffi2003computing} for the Segre class of a projective scheme in terms of the projective degrees defined above. The expression in (\ref{eq:Segre_gs}) combined with Theorem \ref{theorem:projective_deg_theorem} will allow us to compute Segre classes of projective schemes. For more details see \cite{helmer2014algorithm}. 
\begin{propn}[Proposition 3.1 of \cite{aluffi2003computing}] Let $I=(f_0, \dots , f_m) \subset k[x_0, \dots , x_n] $ be a homogeneous ideal defining a scheme $Y\subset \pp^n$ and let $h=c_1\left( \oo_{\pp^n}(1) \right)$ be the class of a hyperplane in $A^*(\pp^n)$. Since $I$ is homogeneous we may assume that the $\deg (f_i) =d$ for all $i$. Let $\phi: \pp^n \dashrightarrow \pp^m $ be the rational map specified by  $$
p \mapsto (f_0(p):\cdots: f_m(p)),$$ let $(g_0,\dots,g_n)$ be the projective degrees of $\phi$. Then we have: \begin{align}
s(Y,\pp^n) &= 1 - c(\oo(dh))^{-1} \cap \left(\sum_{i=0}^n \frac{g_ih^i}{c(\oo(dh))^i} \right) \\
&={1-\sum_{i=0}^n\frac{g_i h^i}{(1+dh)^{i+1}} \; \in A^*(\pp^n)\cong \ZZ[h]/(h^{n+1}).
\label{eq:Segre_gs} } 
\end{align} \label{prop:SegreProjectiveDegreeAluffi} \end{propn}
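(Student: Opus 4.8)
The plan is to resolve the indeterminacy locus of $\phi$ by a single blow-up and then read the Segre class off the exceptional divisor. Let $\pi\colon \tilde X := \mathrm{Bl}_Y \pp^n \to \pp^n$ be the blow-up of $\pp^n$ along the subscheme $Y = V(I)$, with exceptional divisor $E$; equivalently, $\tilde X$ may be realized as the closure of the graph of $\phi$ inside $\pp^n \times \pp^m$. Set $\tilde h = \pi^* h$. First I would record the following: since the $f_i$ generate the ideal sheaf $\mathcal I_Y$ as a subsheaf of $\oo_{\pp^n}(d)$, the defining property $\pi^{-1}(\mathcal I_Y)\cdot \oo_{\tilde X} = \oo_{\tilde X}(-E)$ of the blow-up shows that $\phi$ lifts to a morphism $\tilde\phi\colon \tilde X \to \pp^m$ with $\tilde\phi^*\oo_{\pp^m}(1) = \pi^*\oo_{\pp^n}(d)\otimes \oo_{\tilde X}(-E)$; hence, writing $\tilde H := c_1(\tilde\phi^*\oo_{\pp^m}(1))$, we have $\tilde H = d\tilde h - E$ in $A^*(\tilde X)$.

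Next I would re-express the Segre class in terms of $E$. Using the blow-up formula (\ref{eq:segre_def}) and pushing forward along $\pi$ (with $\pi_*[\tilde X] = [\pp^n]$, and adopting the convention of identifying a characteristic class with its pushforward to $\pp^n$), one obtains $s(Y,\pp^n) = 1 - \pi_*\!\left(\tfrac{1}{1+E}\cap [\tilde X]\right)$, where $\tfrac{1}{1+E} = \sum_{j\ge 0}(-1)^j E^j$ is a finite sum because $E$ is nilpotent in $A^*(\tilde X)$ (indeed $A^{>n}(\tilde X)=0$, as $\tilde X$ is an $n$-dimensional variety). Now substitute $E = d\tilde h - \tilde H$ and factor $1 + E = (1+d\tilde h) - \tilde H = (1+d\tilde h)\bigl(1 - \tfrac{\tilde H}{1+d\tilde h}\bigr)$; since $1 + d\tilde h$ is a unit and $\tilde H$ is nilpotent, this yields the finite expansion $\tfrac{1}{1+E} = \sum_{i\ge 0}\tfrac{\tilde H^i}{(1+d\tilde h)^{i+1}}$ in $A^*(\tilde X)$.

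The last step is a pushforward computation via the projection formula. Because $\tfrac{1}{(1+d\tilde h)^{i+1}} = \pi^*\bigl(\tfrac{1}{(1+dh)^{i+1}}\bigr)$, one gets $\pi_*\!\left(\tfrac{1}{1+E}\right) = \sum_{i\ge 0}\tfrac{1}{(1+dh)^{i+1}}\,\pi_*(\tilde H^i)$. For each $i$, the class $\pi_*(\tilde H^i)$ lies in $A^i(\pp^n) = \ZZ\, h^i$, so it equals $\bigl(\textstyle\int_{\tilde X}\tilde H^i \tilde h^{\,n-i}\bigr)\, h^i$; and the intersection number $\int_{\tilde X}\tilde H^i \tilde h^{\,n-i}$ is precisely the projective degree $g_i$ of $\phi$. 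Granting this, substitution gives $s(Y,\pp^n) = 1 - \sum_{i=0}^n \tfrac{g_i h^i}{(1+dh)^{i+1}}$ (the sum truncating at $i=n$ since $h^{n+1}=0$), which, using $c(\oo(dh)) = 1+dh$, is exactly the two displayed expressions. The hard part will be this identification of $g_i$ with $\int_{\tilde X}\tilde H^i \tilde h^{\,n-i}$: since a general $\pp^i\subset\pp^n$ appearing in (\ref{eq:projective_degrees_def}) may meet the base locus $Y$ of $\phi$, the cleanest route is to carry out the intersection on the graph closure $\tilde X\subset\pp^n\times\pp^m$ and use Kleiman transversality — together with the Bertini theorem of \S A.8.7 of \cite{sommese2005numerical} quoted above — to see that $\phi^{-1}(\pp^{m-i})\cap\pp^i$ is a reduced set of $\int_{\tilde X}\tilde H^i\tilde h^{\,n-i}$ points.
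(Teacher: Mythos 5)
Your argument is correct, but note that the paper does not prove this proposition at all---it is quoted verbatim as Proposition 3.1 of Aluffi's \emph{Computing characteristic classes of projective schemes} \cite{aluffi2003computing}, and your blow-up computation (lifting $\phi$ to the graph closure, writing $E = d\tilde h - \tilde H$, expanding $1/(1+E)$, and pushing forward via the projection formula) is essentially Aluffi's original proof of that result. The one step you flag as delicate, the identification $g_i = \int_{\tilde X}\tilde H^i\tilde h^{\,n-i}$, is exactly what the paper's own Appendix (Proposition \ref{propn:projectiveDegreeExplicit}) establishes quantitatively, so your appeal to Kleiman/Bertini there is consistent with how the paper itself handles it.
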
  
\section{The Algorithm to Compute the $c_{SM}$ Class of a Projective Complete Intersection} \label{section:algorithm}

In this section we describe our new algorithm to compute the $c_{SM}$ class (and hence the Euler characteristic) of a complete intersection subscheme of $\pp^n$ over an algebraically closed field of characteristic zero.

Let $V=V(f_0,\dots,f_m)$ be a complete intersection subscheme of $\pp^n$ such that the scheme $ V(f_0,\dots,f_{m-1})$ is non-singular (allowing for a possible reordering of the generators) and let $J$ be the ideal generated by the $(m+1)\times (m+1)$ minors of the Jacobian matrix of partial derivatives of $f_0,\dots,f_m$. The primary result needed for the algorithms described below is given in Theorem \ref{theorem:MainTheorem} which gives a formula for $c_{SM}(V)$ in terms of the Segre class of $ s(Y,\pp^n)$ where $Y=V(J)\cap V$ is the singularity subscheme of $V$. This Segre class can then be computed using \eqref{eq:Segre_gs} and a method to compute the projective degrees such as Theorem \ref{theorem:projective_deg_theorem}. Theorem \ref{theorem:MainTheorem} follows from Theorem 1.1 of Fullwood \cite{fullwood2014milnor}. We summarize this method in Algorithm \ref{algorithm:mainAlg}. 

In Proposition \ref{propn:Inclusion_exclusion_singular_part_only} and Corollary \ref{corr:inculusionexclusion_ignore_smooth_big} we extend the result of Theorem \ref{theorem:MainTheorem} to any (global) complete intersection subscheme of $\pp^n$ with a type of inclusion-exclusion which considers only the singular generators of the ideal. Hence the number of required Segre class computations is exponential in the number of singular generators. At worst, if all generators define singular schemes, this reduces to inclusion-exclusion as in Proposition \ref{propn:csm_higher_codim}. We present this generalized version of Algorithm \ref{algorithm:mainAlg} in Algorithm \ref{algorithm:hybrid} below.  

In Section \ref{subsection:Runing_time_comparison} we compare the running time of Algorithm \ref{algorithm:hybrid} described below to other algorithms to compute $c_{SM}$ classes for complete intersection varieties in $\pp^n$. We see that for many of the cases considered the new algorithm does indeed provide a performance improvement. While the new method to compute $c_{SM}$ classes is not applicable in all cases it does seem to complement existing methods by providing an efficient approach for a certain subset of problems, particularly those where the ideal defining a complete intersection $V$ has only a few generators which define a singular scheme.  

In \S\ref{subsection:probabilityTheoretic} we obtain a theoretical bound on the probability that Algorithm \ref{algorithm:mainAlg} (or equivalently Algorithms 1 and 2 of the author \cite{helmer2014algorithm}) will give a correct result when working over a finite parameter space. In \S\ref{subsection:probExper} we consider the probability of success for Algorithm \ref{algorithm:mainAlg} (or equivalently Algorithms 1 and 2 of \cite{helmer2014algorithm}) when preformed on several examples using our test computation environment. 
\subsection{The Main Result}
 Let $M$ be a smooth algebraic variety and let $V$ be a subscheme of $M$. From the definition of the Milnor class in \eqref{eq:milnor_class_def} we have the following formula for the class $c_{SM}(V)$ in $A^*(M)$:{\begin{equation}
c_{SM}(V)=c_{FJ}(V)-(-1)^{\codim(V)} \mathcal{M}(V). \label{eq:CsmCfjMilnorRelation}
\end{equation}}
 We now define several notations of Aluffi \cite[\S 1.4]{aluffi1995singular} for operations in the Chow ring. Let $\alpha = \sum_{i\geq 0}\alpha^{(i)}$ be a cycle class in $A^*(M)$ with $\alpha^{(i)}$ denoting the piece of $\alpha$ of codimension $i$ in $A^*(M)$, that is $\alpha^{(i)} \in A^i(M)$. Also let $\mathcal{L}$ be some line bundle on $M$. Define the following notations,
 \begin{equation}
  \alpha^{\vee} = \sum_{i \geq 0} (-1)^i \alpha^{(i)}, \mathrm{\; \;\; and \; \;\;} \alpha \otimes_M \mathcal{L} = \sum_{i \geq 0} \frac{\alpha^{(i)}}{c(\mathcal{L})^i}. \label{eq:ALuffi_chow_ring_tensor_notations}
\end{equation} 

In \cite[\S 1.1]{fullwood2014milnor}, Fullwood gives a new formula for the Milnor class of a subscheme $V\subset M$ which is a global complete intersection of any codimension with an additional assumption on the structure of $V$. 

\begin{theorem}[Theorem 1.1 of Fullwood \cite{fullwood2014milnor}] Let $M$ be a smooth algebraic variety over an algebraically closed field of characteristic zero. Let $V$ be a possibly singular global complete intersection corresponding to the zero scheme of a vector bundle $\mathcal{E} \to M$. Let $j=\mathrm{rk}(\mathcal{E})$. Additionally assume that $ V=M_1\cap \cdots \cap M_j$ for some hypersurfaces $M_1, \dots , M_j$ and assume that, for some ordering of the hypersurfaces, $M_1\cap \cdots \cap  M_{j-1}$ is smooth. Let $\mathcal{L}\to M$ denote the line bundle associated to the divisor $ M_j$ and let $Y$ denote the singularity subscheme of $V$. Then we have \begin{equation}
\mathcal{M}(V)=\frac{c(T_M)}{c(\mathcal{E}) } \cap \left( c( \mathcal{E}^{\vee} \otimes \mathcal{L}) \cap \left( s(Y,M)^{\vee } \otimes_M \mathcal{L} \right) \right).  \label{eq:MilnorOneSingGen}
\end{equation}
\end{theorem}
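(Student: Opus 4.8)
\noindent\emph{Proof proposal.} The plan is to reduce to the case of a Cartier divisor in a smooth ambient variety, where the corresponding formula is due to Aluffi, and then to transport the resulting identity along the regular embedding $\iota\colon W\hookrightarrow M$, with $W:=M_1\cap\cdots\cap M_{j-1}$ the smooth variety singled out by the hypothesis. Write $\mathcal{E}=\mathcal{E}'\oplus\mathcal{L}$ with $\mathcal{E}'=\mathcal{O}(M_1)\oplus\cdots\oplus\mathcal{O}(M_{j-1})$, so that $\mathcal{E}'|_W=N_WM$, the scheme $V$ is the zero scheme in $W$ of a section of $\mathcal{L}|_W=\mathcal{O}_W(V)$ — hence a Cartier divisor in the smooth variety $W$ — and $c(T_W)=\iota^{*}\bigl(c(T_M)/c(\mathcal{E}')\bigr)$, $c(\mathcal{E}')\,c(\mathcal{L})=c(\mathcal{E})$.

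Two preliminary observations are needed. First, $V$ is a local complete intersection both in $M$ (of codimension $j$) and in $W$ (of codimension $1$), and a short computation with the conormal sequences of $V\subset W\subset M$ yields $[T_M|_V]-[N_VM]=[T_W|_V]-[N_VW]$ in $K(V)$; hence $c_{FJ}(V)$ is the same whether computed in $M$ or in $W$, and since $c_{SM}(V)$ is intrinsic it follows from \eqref{eq:milnor_class_def} that $\mathcal{M}(V)=(-1)^{j-1}\iota_{*}\mathcal{M}_W(V)$, where $\mathcal{M}_W(V)$ denotes the Milnor class of $V$ in $W$, formed with $\codim(V,W)=1$. Second, the singularity subscheme $Y$ of $V$ — cut out in $M$ by $\mathcal{I}_V$ together with the maximal minors of the Jacobian matrix of defining equations of $M_1,\dots,M_j$ — coincides scheme-theoretically with the singularity subscheme of the divisor $V\subset W$; this is local, since near a point of $W$ one completes local equations of $M_1,\dots,M_{j-1}$ to a regular system of parameters on $M$ and in those coordinates the only nonzero maximal minors of the Jacobian are the partial derivatives of a defining equation of $M_j$ in the directions along $W$. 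Smoothness of $W$ is essential here, and it is also what makes $s(Y,W)$ available.

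Applying Aluffi's formula for the Milnor class of the Cartier divisor $V$ in the smooth variety $W$ — equivalently, the already known case $j=1$ of the present theorem (\cite{aluffi1995singular}; see also \cite{aluffi2003computing}) — gives, in $A^{*}(W)$,
$$\mathcal{M}_W(V)=\frac{c(T_W)}{c(\mathcal{L}|_W)}\cap\bigl(s(Y,W)^{\vee}\otimes_W\mathcal{L}|_W\bigr),$$
where $(\,\cdot\,)^{\vee}$ and $\otimes_W$ are the operations of \eqref{eq:ALuffi_chow_ring_tensor_notations} taken with respect to the grading of $A^{*}(W)$. The task is then to push this forward by $\iota_{*}$ and rewrite it on $M$. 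I would substitute $c(T_W)=\iota^{*}(c(T_M)/c(\mathcal{E}'))$ and $c(\mathcal{L}|_W)=\iota^{*}c(\mathcal{L})$, invoke the comparison of Segre classes under the regular embedding $\iota$, namely $s(Y,W)=c(\mathcal{E}'|_Y)\cap s(Y,M)$ in $A_*(Y)$ (a standard consequence of the blow-up description \eqref{eq:segre_def} of the Segre class, obtained by comparing $\mathrm{Bl}_Y W$ with $\mathrm{Bl}_Y M$), and push forward using the projection formula; what then remains is a calculation inside this calculus of $(\,\cdot\,)^{\vee}$ and $\otimes$. The pushforward along the codimension-$(j-1)$ embedding $\iota$ shifts codimension by $j-1$, so on classes supported on $Y$ it sends $(\,\cdot\,)^{\vee_W}$ to $(-1)^{j-1}(\,\cdot\,)^{\vee_M}$ and $(\,\cdot\,)\otimes_W\mathcal{L}|_W$ to $c(\mathcal{L})^{j-1}\cap\bigl((\,\cdot\,)\otimes_M\mathcal{L}\bigr)$; the identity $(c(\mathcal{F})\cap\alpha)^{\vee}=c(\mathcal{F}^{\vee})\cap\alpha^{\vee}$ moves $c(\mathcal{E}'|_Y)$ out past the $\vee$; and the one-step identity $c(\mathcal{L})\cap\bigl((c(\mathcal{O}(-M_k))\cap\alpha)\otimes_M\mathcal{L}\bigr)=c\bigl(\mathcal{O}(M_j)\otimes\mathcal{O}(M_k)^{\vee}\bigr)\cap(\alpha\otimes_M\mathcal{L})$, applied $j-1$ times, absorbs the $j-1$ copies of $c(\mathcal{L})$ together with $c((\mathcal{E}')^{\vee})$ into $c(\mathcal{E}^{\vee}\otimes\mathcal{L})$ (the summand $\mathcal{L}^{\vee}\otimes\mathcal{L}=\mathcal{O}$ of $\mathcal{E}^{\vee}\otimes\mathcal{L}$ contributing a trivial factor). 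The two occurrences of $(-1)^{j-1}$ — one from relating $\mathcal{M}(V)$ to $\mathcal{M}_W(V)$, one from converting $\vee_W$ to $\vee_M$ — cancel, and \eqref{eq:MilnorOneSingGen} results.

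I expect the conceptual content to lie entirely in the reduction to the hypersurface case; the hard part will be this last step, where the single most error-prone point is keeping the grading shifts and the signs consistent while manipulating $(\,\cdot\,)^{\vee}$ and $\otimes$. The one ingredient not already recorded in the excerpt is the Segre comparison $s(Y,W)=c(N_WM|_Y)\cap s(Y,M)$ for the regular embedding $W\hookrightarrow M$.
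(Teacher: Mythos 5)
The paper does not actually prove this statement---it is quoted from Fullwood \cite{fullwood2014milnor}---but your argument is correct and is essentially the proof given there: restrict to the smooth intersection $W=M_1\cap\cdots\cap M_{j-1}$, identify $Y$ with the singular subscheme of the divisor $V\subset W$ via the local Jacobian computation, apply Aluffi's hypersurface formula in $W$, and push forward using the comparison $s(Y,W)=c(N_WM|_Y)\cap s(Y,M)$ together with Aluffi's $\vee$/$\otimes$ calculus. The bookkeeping you describe does assemble correctly (the two factors of $(-1)^{j-1}$ cancel, the $j-1$ copies of $c(\mathcal{L})$ from the codimension shift are absorbed into $c(\mathcal{E}^{\vee}\otimes\mathcal{L})$ since $c(\mathcal{L}^{\vee}\otimes\mathcal{L})=1$), and the Segre-class comparison you flag as the one external ingredient is indeed the standard lemma from Aluffi's work on singular schemes of hypersurfaces, correctly stated and with the correct direction.
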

Note that if $V$ is non-singular we will have that $\mathcal{M}(V)=0$. 

\begin{remark}
We also note that if $V=V(I)$ is a non-singular subscheme of $\pp^n$ (even if it is not a complete intersection) we may simply write the following in $A^*(\pp^n)\cong \ZZ[h]/(h^{n+1})$: \begin{equation}
c_{SM}(V)=c(T_{\pp^n})\cap s(V,\pp^n)=(1+h)^{n+1} s(V,\pp^n).
\end{equation} Hence we need compute only the Segre class $s(V,\pp^n)$; this can be done directly by calculating the projective degrees of the rational map specified by the ideal $I$ using Theorem \ref{theorem:projective_deg_theorem} and then applying the result of Proposition \ref{prop:SegreProjectiveDegreeAluffi} to obtain the Segre class. Thus, in particular, inclusion-exclusion is not required in the smooth case. See Fulton \cite[\S4.2.6]{fulton} or Aluffi \cite{aluffi2005characteristic} for more details. \label{remark:SmoothCSM}
\end{remark}

Combining the relation (\ref{eq:CsmCfjMilnorRelation}), the result of Fullwood \cite{fullwood2014milnor} given in \eqref{eq:MilnorOneSingGen}, and the expression for the $c_{FJ}$ class of a locally complete intersection of Suwa \cite{suwa1997classes} we obtain Theorem \ref{theorem:MainTheorem}. This result combined with Proposition \ref{propn:Inclusion_exclusion_singular_part_only} will allow us to devise a more efficient algorithm to compute $c_{SM}$ classes of possibly singular complete intersection varieties. 

\begin{theorem}
Let $k$ be an algebraically closed field of chacteristic zero and let $I=(f_0,\dots, f_m)$ be a homogeneous ideal in $k[x_0,\dots,x_n]$. Assume that $V=V(I)$ is a complete intersection subscheme of $\pp^n$ and let $Y$ be the singularity subscheme of $V$. Let $\deg(f_i)=d_i$, and further assume that $V(f_0,\dots,f_{m-1})$ is smooth scheme theoretically. Let $$A^*(\pp^n)\cong \ZZ[h]/(h^{n+1})$$ denote the Chow ring of $\pp^n$ where $h=c_1(\oo_{\pp^n}(1))$ is the hyperplane class in $\pp^n$. Then we have the following relation in $A^*(\pp^n)$ :  { \begin{align*}
&c_{SM}(V )= (1+h)^{n+1} \cdot \prod_{i=0}^{m} \frac{d_ih}{1+d_ih}- \\
&\frac{(-1)^m(1+h)^{n+1}}{\prod_{i=0}^m (1+d_i h)} \left( \sum_{p=0}^{m} h^p \sum_{i=0}^p {m -i \choose p-i} (-1)^i d_m^{p-i}\cdot \tilde{c}_i  \right) \cdot \left( \sum_{i=0}^n \frac{(-1)^i s_i h^i}{(1+d_m)^i}\right),
\end{align*}
}\normalsize  where we write $$\prod_{i=0}^m (1+d_i h)=\sum_{i=0}^m \tilde{c}_i h^i, \;\;\; \mathrm{and} \;\;\; s(Y,\pp^n)=\sum_{i=0}^n s_i h^i.$$ \normalsize \label{theorem:MainTheorem}
\end{theorem}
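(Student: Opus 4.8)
The plan is to feed the projective setting $M=\pp^n$ into Fullwood's Milnor-class formula \eqref{eq:MilnorOneSingGen}, combine it with the expression $c_{FJ}(V)=c(T_{\pp^n})\cap s(V,\pp^n)$ for the Chern--Fulton--Johnson class of a local complete intersection (Suwa \cite{suwa1997classes}, cf.\ Proposition 4.1 of \cite{fulton}), and then carry out the resulting arithmetic in $A^*(\pp^n)\cong\ZZ[h]/(h^{n+1})$. First I would record the data required by Fullwood's theorem: as $V=V(f_0,\dots,f_m)$ is a complete intersection, $\codim(V)=m+1$ and $V$ is the zero scheme of the rank-$(m+1)$ bundle $\mathcal{E}=\bigoplus_{i=0}^{m}\oo_{\pp^n}(d_ih)$; in the notation of that theorem $M_i=V(f_{i-1})$, the line bundle $\mathcal{L}$ attached to the last divisor $M_{m+1}=V(f_m)$ is $\oo_{\pp^n}(d_mh)$, and $M_1\cap\cdots\cap M_m=V(f_0,\dots,f_{m-1})$ is smooth by hypothesis, so the theorem applies. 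Here $c(T_{\pp^n})=(1+h)^{n+1}$, $c(\mathcal{E})=\prod_{i=0}^{m}(1+d_ih)$, and $N_V\pp^n$ is the restriction of $\mathcal{E}$ to $V$. Starting from \eqref{eq:CsmCfjMilnorRelation} in the form $c_{SM}(V)=c_{FJ}(V)-(-1)^{\codim(V)}\mathcal{M}(V)$, I would compute the two summands separately.

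For the first summand, $V$ is both a local and a global complete intersection, so $c_{FJ}(V)=c(T_{\pp^n})\cap s(V,\pp^n)$ with $s(V,\pp^n)=c(N_V\pp^n)^{-1}\cap[V]$ and $[V]=\prod_{i=0}^{m}d_ih$ in $A^*(\pp^n)$, which yields $(1+h)^{n+1}\prod_{i=0}^{m}\frac{d_ih}{1+d_ih}$. For the Milnor term I would substitute the above data into \eqref{eq:MilnorOneSingGen}: the prefactor $c(T_{\pp^n})/c(\mathcal{E})$ becomes $(1+h)^{n+1}/\prod_{i=0}^{m}(1+d_ih)$; passing to Chern roots, $\mathcal{E}^{\vee}\otimes\mathcal{L}=\bigoplus_{i=0}^{m}\oo_{\pp^n}((d_m-d_i)h)$ has a trivial $i=m$ summand, so $c(\mathcal{E}^{\vee}\otimes\mathcal{L})$ is a polynomial of degree $m$ in $h$, and expanding it via the standard formula for the Chern classes of a line-bundle twist produces exactly the inner double sum over $p$ and $i$ displayed in the statement; finally, writing $s(Y,\pp^n)=\sum_i s_ih^i$ and applying the operations recalled in \eqref{eq:ALuffi_chow_ring_tensor_notations}, the operation $(\,\cdot\,)^{\vee}$ multiplies the codimension-$i$ piece by $(-1)^i$ and $\otimes_{\pp^n}\mathcal{L}$ divides it by $c(\mathcal{L})^i=(1+d_mh)^i$, so $s(Y,\pp^n)^{\vee}\otimes_{\pp^n}\mathcal{L}=\sum_i\frac{(-1)^is_ih^i}{(1+d_mh)^i}$. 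Multiplying these three factors and incorporating the overall sign $-(-1)^{\codim(V)}$ gives the second summand.

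The manipulations inside $\ZZ[h]/(h^{n+1})$ are mechanical; the steps needing real care, and where I expect the main difficulty, are twofold. First, one must check that the scheme-theoretic smoothness of $V(f_0,\dots,f_{m-1})$ is exactly the hypothesis that allows Fullwood's theorem to be invoked, and that $\mathcal{E}$ and $\mathcal{L}$ are correctly identified in the projective setting. Second, the sign and index bookkeeping in passing from the intrinsic formula \eqref{eq:MilnorOneSingGen}, stated through the $(\,\cdot\,)^{\vee}$ and $\otimes_M$ operations, to the explicit polynomial identity is delicate: the dual, the line-bundle twist, and the factor $(-1)^{\codim(V)}$ each introduce signs that must combine correctly, and the binomial coefficients coming out of the twist must be tracked accurately. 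As a consistency check I would confirm the smooth case: if $V$ is nonsingular then $Y=\emptyset$, so $\mathcal{M}(V)=0$ and the formula collapses to $c_{SM}(V)=(1+h)^{n+1}s(V,\pp^n)$, matching Remark \ref{remark:SmoothCSM}.
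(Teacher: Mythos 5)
Your proposal follows the paper's proof essentially verbatim: both feed $M=\pp^n$, $\mathcal{E}=\bigoplus_{i}\oo(d_ih)$ and $\mathcal{L}=\oo(d_mh)$ into Fullwood's formula \eqref{eq:MilnorOneSingGen}, expand $c(\mathcal{E}^\vee\otimes\mathcal{L})$ and $s(Y,\pp^n)^\vee\otimes_{\pp^n}\mathcal{L}$ via \eqref{eq:ALuffi_chow_ring_tensor_notations}, take $c_{FJ}(V)=(1+h)^{n+1}\prod_{i}\frac{d_ih}{1+d_ih}$ from Suwa, and assemble the result through \eqref{eq:CsmCfjMilnorRelation}. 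The one point of divergence is the final sign: you apply $-(-1)^{\codim(V)}=-(-1)^{m+1}$ literally, whereas the paper's proof and theorem statement use $-(-1)^{m}$ even though $\codim(V)=m+1$ --- this is an internal inconsistency of the paper between \eqref{eq:CsmCfjMilnorRelation} and the last line of its proof, not a gap in your argument, but you should resolve it explicitly before asserting the displayed formula.
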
 
\begin{proof}
 First consider the result of \eqref{eq:MilnorOneSingGen}, taking $M=\pp^n$. Since $V$ is a complete intersection it may be defined as the zero scheme of a rank $m+1$ vector bundle $\mathcal{E}$. Let $\mathcal{L} \to \pp^n $ be the line bundle associated to $V(f_m)$. Then we have that $\mathcal{L}=\oo(d_m h)$, $c(\mathcal{E})=\prod_{i=0}^m (1+d_i h)$ and $c(T_{\pp^n})=(1+h)^{n+1}$. Combining this with \eqref{eq:MilnorOneSingGen} we have \begin{align*}
\mathcal{M}(V ) &=\frac{c(T_{\pp^n})}{c(\mathcal{E}) } \cap \left( c( \mathcal{E}^{\vee} \otimes \mathcal{L}) \cap \left( s(Y,{\pp^n})^{\vee } \otimes_{\pp^n} \mathcal{L} \right) \right) \\
&= \frac{(1+h)^{n+1}}{\prod_{i=0}^m (1+d_i h)} \sum_{p=0}^{m} \sum_{i=0}^p {m -i \choose p-i} c_i(\mathcal{E}^{\vee}) c_1(L)^{p-i} \cap \left( s(Y,{\pp^n})^{\vee } \otimes_{\pp^n} \oo(d_mh) \right)
\end{align*}

Let $$c(\mathcal{E})= \prod_{i=0}^m (1+d_i h)=\sum_{i=0}^m \tilde{c}_i h^i, \; \mathrm{and}\; s(Y,\pp^n)=\sum_{i=0}^n s_i h^i,$$
using \eqref{eq:ALuffi_chow_ring_tensor_notations} we may expand the expression $\left( s(Y,{\pp^n})^{\vee } \otimes_{\pp^n} \oo(d_mh) \right)$ as, \begin{align*}
\left( \sum_{i=0}^n s_i h^i  \right)^{\vee } \otimes_{\pp^n} \oo(d_mh) &= \left(\sum_{i=0}^n (-1)^is_i h^i  \right) \otimes_{\pp^n} \oo(d_mh) \\
&=\sum_{i=0}^n \frac{(-1)^is_i h^i}{c \left( \oo(d_mh) \right)^i } \\
&= \sum_{i=0}^n \frac{(-1)^is_i h^i}{ \left(1 +d_mh\right)^i } .
\end{align*} We may now write, \small{ $$
\mathcal{M}(V )=\frac{(1+h)^{n+1}}{\prod_{i=0}^m (1+d_i h)} \left( \sum_{p=0}^{m} h^p \sum_{i=0}^p {m -i \choose p-i} (-1)^id_m^{p-i}\cdot \tilde{c}_i  \right) \cdot \left( \sum_{i=0}^n \frac{(-1)^i s_i h^i}{(1+d_m)^i}\right).
$$} \normalsize

Since $V$ is a complete intersection in $\pp^n$ from Suwa \cite{suwa1997classes} we have $$
c_{FJ}(V)=(1+h)^{n+1} \cdot \prod_{i=0}^{m} \frac{d_ih}{1+d_ih},
$$ and applying the relation $c_{SM}(V)=c_{FJ}(V)-(-1)^{m} \mathcal{M}(V)$ gives the desired result. \end{proof}

Hence we may conclude that the computation of $c_{SM}$ classes in the case of the theorem above requires only the computation of $s(Y,\pp^n)$ (where $Y $ is the singularity subscheme of $V$), which can be accomplished by means of the projective degree calculation of Theorem \ref{theorem:projective_deg_theorem} for the rational map specified by the ideal corresponding to $Y$ and an application of the formula (\ref{eq:Segre_gs}). 

The singularity subscheme $Y$ of $V$ as given above will be $Y=V(J)\cap V$ where $J$ is the ideal in $k[x_0,\dots,x_n]$ generated by the $(m+1) \times (m+1) $ minors of the $(m+1) \times (n+1) $ Jacobian matrix of partial derivatives, i.e.\ the matrix $a_{i,j}=\left(\frac{df_i}{dx_j} \right)$ for $i=0,\dots,m$, $j=0,\dots, n$ (here we index the first row and column of the Jacobian matrix by $0$). In practice we will use the ideal $(I+J):(x_0,\dots,x_n)^{\infty}$ as the ideal of the singularity subscheme $Y$. 

Since the only unknown in the expression of Theorem \ref{theorem:MainTheorem} is the Segre class $s(Y,\pp^n)$ we may obtain an Algorithm to compute $c_{SM}$ classes (in the setting of the theorem) by combining Theorem \ref{theorem:MainTheorem} with the method to compute Segre classes using the projective degree of a rational map given by the author in \cite{helmer2014algorithm}. We summarize this below. 

Let $J=(w_0, \dots , w_m) \subset R=k[x_0, \dots , x_n] $ be a homogeneous ideal defining a scheme $Y\subset \pp^n$ and let $h=c_1\left( \oo_{\pp^n}(1) \right)$ be the class of a hyperplane in $A_*(\pp^n)\cong \ZZ[h]/(h^{n+1})$. Since $J$ is homogeneous we may assume that the $\deg (w_i) =d$ for all $i$. Also let $(g_0,\dots,g_n)$ be the projective degrees of the map $\phi:\pp^n \dashrightarrow \pp^m $, \begin{equation*}
\phi: p \mapsto (w_0(p): \cdots : w_m(p)).
\end{equation*} To compute the projective degrees $g_i$ in the case where $\phi $ is specified by a homogeneous ideal we may apply Theorem \ref{theorem:projective_deg_theorem}. Once we have obtained the projective degrees then we may apply Proposition \ref{prop:SegreProjectiveDegreeAluffi} to obtain the Segre class $s(Y,\pp^n)$. 

To extend the result of Theorem \ref{theorem:MainTheorem} to any complete intersection subscheme of $\pp^n$ we will use Proposition \ref{propn:Inclusion_exclusion_singular_part_only} below. For a scheme $V=V(I) \subset \pp^n$ this proposition describes a type of inclusion-exclusion for $c_{SM}$ class which considers only the generators of $I$ which define singular subschemes. If the majority of generators of $I$ define a non-singular subscheme of $\pp^n$ this result combined with Theorem \ref{theorem:MainTheorem} can sometimes offer a speed advantage in comparison to methods which use only inclusion-exclusion.  
\begin{propn}
Let $Z\subset \pp^n$ be smooth (scheme-theoretically) and let $ X_1=V(f_1),\; X_2=V(f_2)$ be singular hypersurfaces in $\pp^n$. If $V=Z\cap X_1\cap X_2$, then we have \begin{equation}
c_{SM}(V)=c_{SM}(Z\cap X_1) + c_{SM}(Z\cap X_2)-c_{SM}(Z\cap (X_1 \cup X_2)), \label{eq:IE_to_extend_on_sing_gen_CSM}
\end{equation}
 here $X_1 \cup X_2$ is the scheme generated by $f_1 \cdot f_2$. Additionally, when $V$ is a complete intersection each of the terms in (\ref{eq:IE_to_extend_on_sing_gen_CSM}) can be computed using Theorem \ref{theorem:MainTheorem}. \label{propn:Inclusion_exclusion_singular_part_only}
\end{propn}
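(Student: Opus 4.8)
The plan is to recognize that \eqref{eq:IE_to_extend_on_sing_gen_CSM} is nothing but the inclusion-exclusion relation \eqref{eq:csm_inclusion_exclusion} for the pair of subschemes $V_1=Z\cap X_1$ and $V_2=Z\cap X_2$, and then to check, by bookkeeping, that the three schemes appearing on the right-hand side each satisfy the hypotheses of Theorem \ref{theorem:MainTheorem} when $V$ is a complete intersection.

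First I would establish the identity. As functions on the closed points of $\pp^n$ one has the tautology $\mathfrak{1}_{Z\cap X_1}+\mathfrak{1}_{Z\cap X_2}=\mathfrak{1}_{(Z\cap X_1)\cup(Z\cap X_2)}+\mathfrak{1}_{(Z\cap X_1)\cap(Z\cap X_2)}$. Scheme-theoretically $(Z\cap X_1)\cap(Z\cap X_2)=Z\cap X_1\cap X_2=V$, while, as sets, $(Z\cap X_1)\cup(Z\cap X_2)=Z\cap(X_1\cup X_2)$, since intersection distributes over union of sets and $V(f_1f_2)$ has support $|X_1|\cup|X_2|$. Since $c_{SM}$ factors through the constructible-function functor (Definition \ref{defn:csm_natural_transform}) it is additive, and $c_{SM}(\mathfrak{1}_W)=c_{SM}(W)=c_{SM}(W_{red})$ depends only on the support of $W$; applying $c_{SM}$ to the displayed identity of constructible functions and rearranging therefore yields exactly \eqref{eq:IE_to_extend_on_sing_gen_CSM}. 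This is of course precisely \eqref{eq:csm_inclusion_exclusion} with $V_1=Z\cap X_1$ and $V_2=Z\cap X_2$; the only points to observe are the two identifications $V_1\cap V_2=V$ and $|V_1\cup V_2|=|Z\cap(X_1\cup X_2)|$.

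For the second assertion I would place myself in the situation of the algorithm, writing the ideal of $V$ as $I=(g_1,\dots,g_r,f_1,f_2)$ where $Z=V(g_1,\dots,g_r)$ is the smooth subscheme cut out by the generators of $I$ other than the two singular ones, so that $g_1,\dots,g_r,f_1,f_2$ is a regular sequence in $k[x_0,\dots,x_n]$ and $\codim_{\pp^n}V=r+2$. Because $k[x_0,\dots,x_n]$ is Cohen-Macaulay, any sub-collection of these polynomials generates an ideal whose codimension equals the number of polynomials in the collection, and hence is itself a complete intersection. Applying this to $Z\cap X_1=V(g_1,\dots,g_r,f_1)$ and $Z\cap X_2=V(g_1,\dots,g_r,f_2)$ shows these are complete intersections of codimension $r+1$ whose first $r$ generators cut out the smooth scheme $Z$, so Theorem \ref{theorem:MainTheorem} applies to both (reordering generators if necessary, which the theorem permits). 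For the last term, $Z\cap(X_1\cup X_2)=V(g_1,\dots,g_r,f_1f_2)$ is generated by $r+1$ polynomials, while its support $|Z\cap X_1|\cup|Z\cap X_2|$ is a union of two sets of codimension $r+1$ and hence has codimension exactly $r+1$; so by Cohen-Macaulayness again $g_1,\dots,g_r,f_1f_2$ is a regular sequence, $Z\cap(X_1\cup X_2)$ is a complete intersection, and its first $r$ generators once more cut out the smooth scheme $Z$. Thus each of the three $c_{SM}$ classes on the right of \eqref{eq:IE_to_extend_on_sing_gen_CSM} is computable via Theorem \ref{theorem:MainTheorem}.

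The identity itself is essentially formal, so the step I expect to need the most care is the complete-intersection check for the third term: confirming that passing from the two hypersurfaces $X_1,X_2$ to the single hypersurface $X_1\cup X_2=V(f_1f_2)$ does not spoil the complete-intersection property of $Z\cap(X_1\cup X_2)$. This is where one genuinely invokes unmixedness (Cohen-Macaulayness of the polynomial ring) rather than a mere manipulation of regular sequences; one must also keep in mind that the scheme structures on $V_1\cup V_2$ and on $Z\cap(X_1\cup X_2)$ need not agree, so that the reduction of the $c_{SM}$ identity is legitimate only because $c_{SM}$ sees a scheme through its support, equivalently through its constructible function.
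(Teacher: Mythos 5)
Your proof is correct and takes essentially the same route as the paper: the displayed identity is just the inclusion-exclusion property \eqref{eq:csm_inclusion_exclusion} applied to $V_1=Z\cap X_1$ and $V_2=Z\cap X_2$, using that $V_1\cap V_2=V$ and that $c_{SM}$ depends only on the support, so $c_{SM}(V_1\cup V_2)=c_{SM}(Z\cap(X_1\cup X_2))$, which is all the paper's one-line proof invokes. Your further check that each term on the right-hand side is again a complete intersection whose initial generators cut out the smooth scheme $Z$ (in particular that $g_1,\dots,g_r,f_1f_2$ is a regular sequence, which also follows more directly since a product of non-zerodivisors modulo $(g_1,\dots,g_r)$ is a non-zerodivisor) is sound and supplies a detail the paper asserts without proof.
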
 \begin{proof}
This result follows directly from the inclusion-exclusion property of the $c_{SM}$ class, see \eqref{eq:csm_inclusion_exclusion}.
\end{proof}
\begin{corr}
Let $	V = Z \cap V(f_1)\cap  \cdots \cap V(f_r)$ be a subscheme of $\pp^n$, with the subscheme $Z$ being non-singular. Write the polynomials defining $W=V(f_1) \cap \cdots \cap V(f_r)$ as $F=(f_1,\dots , f_r)$ and let $F_{ \left\lbrace S \right\rbrace } = \prod_{i \in S} f_i $ for $S \subset \left\lbrace 1, \dots , r\right\rbrace .$ Then, $$
c_{SM}(Z\cap W)= \sum_{S \subset \left\lbrace 1, \dots , r\right\rbrace} (-1)^{|S|+1}c_{SM} \left(Z\cap V( F_{\left\lbrace S \right\rbrace } )\right)
$$  where $|S|$ denotes the cardinality of the integer set $S$. The expressions \newline $c_{SM} \left(W\cap V( F_{\left\lbrace S \right\rbrace } )\right)$ can be computed using Theorem \ref{theorem:MainTheorem} when $V$ is a complete intersection.\label{corr:inculusionexclusion_ignore_smooth_big}
\end{corr}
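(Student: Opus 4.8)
The plan is to prove this by induction on $r$, the number of singular hypersurfaces being intersected with the smooth scheme $Z$. The base case $r=1$ is trivial: the sum on the right-hand side has the single term $S=\{1\}$, giving $c_{SM}(Z\cap V(f_1))$, which is exactly the left-hand side; and the case $r=2$ is precisely the content of Proposition \ref{propn:Inclusion_exclusion_singular_part_only}, which has already been established from the inclusion-exclusion property \eqref{eq:csm_inclusion_exclusion}. So the real work is the inductive step.

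For the inductive step I would write $W = V(f_1)\cap\cdots\cap V(f_r)$ and split off the last factor: set $W' = V(f_1)\cap\cdots\cap V(f_{r-1})$, so that $Z\cap W = (Z\cap W')\cap V(f_r)$. Applying the inclusion-exclusion property \eqref{eq:csm_inclusion_exclusion} to the two subschemes $Z\cap W'$ and $Z\cap V(f_r)$ inside $\pp^n$ gives
\begin{equation*}
c_{SM}(Z\cap W) = c_{SM}(Z\cap W') + c_{SM}(Z\cap V(f_r)) - c_{SM}\bigl(Z\cap (W'\cup V(f_r))\bigr).
\end{equation*}
Here $W'\cup V(f_r)$ is the union of $V(f_r)$ with the intersection $V(f_1)\cap\cdots\cap V(f_{r-1})$, which by distributivity equals $\bigl(V(f_1)\cup V(f_r)\bigr)\cap\cdots\cap\bigl(V(f_{r-1})\cup V(f_r)\bigr) = V(f_1 f_r)\cap\cdots\cap V(f_{r-1}f_r)$. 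Each of the three terms on the right is now of the form "$Z$ intersected with $r-1$ (or fewer) hypersurfaces", so the induction hypothesis applies to each of them, expanding each as an alternating sum over subsets.

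The last step is a bookkeeping argument: substitute the three expansions from the inductive hypothesis and check that, after collecting, one recovers exactly $\sum_{S\subseteq\{1,\dots,r\}}(-1)^{|S|+1}c_{SM}(Z\cap V(F_{\{S\}}))$. The subsets not containing $r$ come from the expansion of $c_{SM}(Z\cap W')$; the single subset $S=\{r\}$ comes from $c_{SM}(Z\cap V(f_r))$; and the subsets $S$ that properly contain $r$ come from the expansion of $c_{SM}(Z\cap(W'\cup V(f_r)))$, where a subset $T\subseteq\{1,\dots,r-1\}$ contributes the product $\prod_{i\in T}(f_i f_r)$, which has the same zero scheme as $F_{\{T\cup\{r\}\}}$ and hence (since $c_{SM}$ depends only on the reduced scheme, i.e.\ $c_{SM}(V)=c_{SM}(V_{red})$) the same $c_{SM}$ class — the sign works out because of the extra minus in front of that term together with the shift $|T|+1 \mapsto |T\cup\{r\}|+1 = |T|+2$. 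I expect this sign-and-indexing reconciliation to be the only genuinely fiddly part; it is the same combinatorial identity that underlies Proposition \ref{propn:csm_higher_codim}, just carried out relative to the fixed smooth factor $Z$. Finally, the concluding sentence — that each $c_{SM}(Z\cap V(F_{\{S\}}))$ is computable via Theorem \ref{theorem:MainTheorem} when $V$ is a complete intersection — needs only the observation that $Z\cap V(F_{\{S\}})$ is cut out by the generators of the ideal of $Z$ together with the single additional polynomial $F_{\{S\}}$, so the smooth-on-all-but-one-generator hypothesis of Theorem \ref{theorem:MainTheorem} is met (with $Z$ playing the role of $V(f_0,\dots,f_{m-1})$), provided $Z\cap V(F_{\{S\}})$ is itself a complete intersection, which follows from the complete intersection hypothesis on $V$.
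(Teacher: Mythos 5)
Your proposal is correct and follows essentially the same route as the paper, which treats the corollary as an immediate consequence of the $c_{SM}$ inclusion-exclusion property \eqref{eq:csm_inclusion_exclusion} (via Proposition \ref{propn:Inclusion_exclusion_singular_part_only}) without spelling out the iteration; your induction on $r$, with the observation that $c_{SM}$ depends only on the reduced scheme so $\prod_{i\in T}(f_if_r)$ and $F_{\{T\cup\{r\}\}}$ give the same class, simply makes that bookkeeping explicit. The closing remark on applicability of Theorem \ref{theorem:MainTheorem} also matches the paper's intent.
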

 This result allows us to extend the application of Theorem \ref{theorem:MainTheorem} to complete intersections $V=V(I) \subset \pp^n$ where several of the generators of the ideal $I$ define a singular scheme. At worst, when all of the generators are singular, this will reduce to inclusion-exclusion. However if only a few of the generators are singular this could offer a significant computational speed boost by lowering the degrees considerably. 

In Algorithm \ref{algorithm:mainAlg} we summarize the algorithm to compute $c_{SM}$ classes for projective varieties $V$ satisfying the assumptions of Theorem \ref{theorem:MainTheorem}. In Algorithm \ref{algorithm:hybrid} we give an algorithm which is applicable for any subscheme $V$ of $\pp^n$ defined by a homogeneous ideal. This algorithm takes advantage of the result of Corollary \ref{corr:inculusionexclusion_ignore_smooth_big} combined with Theorem \ref{theorem:MainTheorem} when $V$ is a complete intersection. If $V$ is smooth the result of Remark \ref{remark:SmoothCSM} is used. If $V$ is neither smooth nor a complete intersection then inclusion-exclusion is used. 
Below we present Algorithm \ref{algorithm:mainAlg}, a probabilistic algorithm to compute $c_{SM}(V)$ for $V=V(f_0,\dots,f_m)$ where $V(f_0,\dots,f_{m-1})$ is smooth (scheme theoretically).
\begin{algorithm}


 \textbf{Input:} A homogeneous ideal $I=(f_0,\dots, f_m)$ in $k[x_0,\dots,x_n]$ defining a complete intersection scheme $V=V(I)\subset \pp^n$ such that $V(f_0,\dots,f_{m-1})$ is smooth (scheme theoretically). \newline 
 \textbf{Output:} $c_{SM}(V)$ in $A^*(\pp^n)\cong \ZZ[h]/(h^{n+1})$ and/or $\chi(V)$. 
\begin{itemize}
\item Find the singularity subscheme $Y=V(J)$, of $X$
\begin{itemize}
\item Set $K$ equal to the $(m+1)\times (m+1)$ minors of the Jacobian matrix of $I$, that is the matrix with entries $a_{i,j}=\left(\frac{df_i}{dx_j} \right)$ for $i=0,\dots,m$, $j=0,\dots, n$. 
\item $J=(K+I):(x_0,\dots,x_n)^{\infty}$.
\item $Y=V(J)$.
\end{itemize} 
\item Apply Theorem \ref{theorem:projective_deg_theorem} with the rational map defined by the ideal $J$ to compute the projective degrees $g_0,\dots, g_n$.
\item Compute $s(Y,\pp^n)$ by using (\ref{eq:Segre_gs}) and the projective degrees $g_0,\dots, g_n$ computed above. 
\item Apply Theorem \ref{theorem:MainTheorem} to obtain $c_{SM}(V)$. 
\end{itemize}\label{algorithm:mainAlg}
\end{algorithm} 

Below we present Algorithm \ref{algorithm:hybrid}, a probabilistic algorithm to compute $c_{SM}(V)$ for $V=V(I)$ any subscheme of $\pp^n$. This algorithm takes advantage of the result of Corollary \ref{corr:inculusionexclusion_ignore_smooth_big} combined with Theorem \ref{theorem:MainTheorem} when $V$ is a complete intersection. If $V$ is smooth the result of Remark \ref{remark:SmoothCSM} is used.
\begin{algorithm}
 \textbf{Input:} a homogeneous ideal $I=(f_0,\dots, f_m)$ in $k[x_0,\dots,x_n]$ defining a scheme $V=V(I)\subset \pp^n$. \newline 
\textbf{Output:} $c_{SM}(V)$ in $A^*(\pp^n)\cong \ZZ[h]/(h^{n+1})$ and/or $\chi(V)$. 
\begin{itemize}
\item \textbf{if} $V$ is non-singular (i.e.\  if the singularity subscheme  $Y$ of $V$ is empty):
\begin{itemize}
\item \textbf{if} $\codim(V)=m+1$ (i.e.\ $V$ is a complete intersection):\begin{itemize}
\item $V$ is smooth so $s(Y,\pp^n)=0$ in Theorem \ref{theorem:MainTheorem}, let $d_i=\deg(f_i)$.
\item $c_{SM}(V)=(1+h)^{n+1} \cdot \prod_{i=0}^{m} \frac{d_ih}{1+d_ih} $. 
\item \textbf{Return} $c_{SM}(V)$ and/or $ \chi(V)$.
\end{itemize}
\item Compute the projective degrees $(g_0,\dots,g_n)$ of the rational map defined by the ideal $I$ using Theorem \ref{theorem:projective_deg_theorem}.
\item Compute $s(V,\pp^n)$ by using Eq.\ (\ref{eq:Segre_gs}) and the projective degrees $(g_0,\dots,g_n)$ obtained above. 
\item Compute $c_{SM}(V)=(1+h)^{n+1}s(V,\pp^n)$.
\item \textbf{Return} $c_{SM}(V)$ and/or return $ \chi(V)$.
\end{itemize}
\item \textbf{else if} $\codim(V)=m+1$ (i.e.\ $V$ is a complete intersection):\begin{itemize}
\item \textbf{for} $j=1,..,m$ and \textbf{for} each subset $f_{\ell_0},\dots,f_{\ell_{m-j}}$ of $f_1,\dots, f_m$ containing $m+1-j$ elements:
\begin{itemize}
 \item \textbf{if} $V(f_{\ell_0},\dots,f_{\ell_{m-j}})$ is non-singular:
 \begin{itemize}
 \item Let $Z=V(f_{\ell_0},\dots,f_{\ell_{m-j}})$.
 \item Let $F$ be the set $f_{\ell_{m-j+1}},\dots,f_{\ell_{m}} $ and let $F_{ \left\lbrace S \right\rbrace } = \prod_{i \in S} f_i $ for $S \subset \left\lbrace \ell_{m-j+1}, \dots , \ell_m \right\rbrace$. 
 \item Apply Corollary \ref{corr:inculusionexclusion_ignore_smooth_big} to obtain $$
c_{SM}(V)= \sum_{S \subset \left\lbrace \ell_{m-j+1}, \dots , \ell_m \right\rbrace} (-1)^{|S|+1}c_{SM} \left(Z\cap V( F_{\left\lbrace S \right\rbrace } )\right)
 $$ and compute each $c_{SM}$ class in the summation using Theorem \ref{theorem:MainTheorem} as presented in Algorithm \ref{algorithm:mainAlg}.
\item \textbf{Return} $c_{SM}(V)$ and/or $\chi(V)$.
 \end{itemize}
\end{itemize}
\end{itemize}
\item \textbf{else}: Compute $c_{SM}(V)$  using Algorithm 3 of \cite{helmer2014algorithm}, that is using inclusion/exclusion.
\end{itemize} \label{algorithm:hybrid}

\end{algorithm} 

\subsection{Running Time Comparison} \label{subsection:Runing_time_comparison}
\begin{table}[h]\resizebox{\linewidth}{!}{
\begin{tabular}{@{} l *8c @{}}
\toprule 
 \multicolumn{1}{c}{{ \textbf{INPUT}}}    & {CSM (Aluffi \cite{aluffi2003computing}) [M2]}  &  {CSM (Jost \cite{jost2013algorithm}) [M2]}  &  {\textbf{ \color{chameleongreen2} csm\textunderscore dir (Th.\ \ref{theorem:MainTheorem})[M2]} } &  { csm\textunderscore I\textunderscore E (\cite{helmer2014algorithm})[M2]} &  {{  csm\textunderscore dir (Th.\ \ref{theorem:MainTheorem})[Sa]} } &  { csm\textunderscore I\textunderscore E (\cite{helmer2014algorithm})[Sa]} \\ 
 \midrule 
   $V_1 \subset \pp^7$ & - & - [-] & \color{chameleongreen2} 0.3s (0.2s) [4.8s] & 580.9s (116.5s) [-]& 0.5s [18.1s] &-[-]\\ 
  $V_2 \subset \pp^4$ & -&1.7s [-] & \color{chameleongreen2} 0.3s (0.1s) [1.3s]& 1.2s (1.2s) [44.1s]& 1.4s [1.3s] &6.3s [-]\\ 
   $V_3 \subset \pp^6$ & -& 27.7s [-] & \color{chameleongreen2} 7.2s (2.2s) [-] & 33.2s (53.2s) [-]& 90.4s [164.3s] & 215.3s[-]\\ 
    $V_4 \subset \pp^5$  & -& - [-]& \color{chameleongreen2} 4.6s (0.7s) [5.5s] & - (-) [-]& 31.9s [3.3s] &- [591.6s]\\
    $V_5 \subset \pp^6$ & -& - [-]& \color{chameleongreen2}  19.9s (7.9s) [24.9s] & - (-) [-]& 145.0s [61.2s] &- [-]\\
\bottomrule
 \end{tabular}} \caption{Run times (over $\mathbb{Q}$) of different algorithms for computing $c_{SM}(V)$ and $\chi(V)$ for $V$ a complete intersection subscheme of $\pp^n$. The timings in [\dots] are those of numeric implementations using Bertini \cite{Bertini} or PHCpack \cite{PHCpack}, the numeric timings in the [M2] column use Bertini while the numeric timings in the [Sa] column use PHCpack. The timings in (\dots) are from an implementation of the result of Proposition \ref{theorem:projective_deg_theorem} which uses a saturation rather than computing the degree of the zero dimensional ideal to find the projective degree. For Algorithm 3 of the author \cite{helmer2014algorithm} (denoted csm\textunderscore I\textunderscore E) and for csm\textunderscore dir (Th.\ \ref{theorem:MainTheorem}) we include timings for the Macaulay2 \cite{M2} and Sage \cite{sage} implementations; these are denoted [M2] and [Sa], respectively. 
 \label{table:csmResults}}
\end{table}The main computational cost of Algorithm \ref{algorithm:mainAlg} is the computation of the projective degrees $g_0,\dots, g_n$. This can be accomplished in a number of different ways. The method we will use for this computation consists of finding the degree of the zero dimensional ideal described in Theorem \ref{theorem:projective_deg_theorem}. This can be accomplished symbolically using Gr\"{o}bner bases calculations, or numerically using homotopy continuation via a package such as PHCpack \cite{PHCpack} or Bertini \cite{Bertini}. 

For the examples considered the symbolic methods run over a finite field are in general much faster. If, however, one needs to work over $\mathbb{Q}$ the situation is more complicated; for the majority of the examples considered the symbolic methods are still faster but it is less clear that this is always the case for any example. We note, specifically, that when working over $\mathbb{Q}$ if we consider examples with rational coefficients with very large numerators and denominators the numeric methods performance changes very little while the symbolic methods get progressively slower as the size of the integers in the numerators and denominators increases. We also note that the numeric implementations are not necessarily completely optimized and hence with further work on optimizing the implementations their performance could potentially improve, additionally the numeric versions may be run in parallel when required. 

In any case, regardless of the size of the numerator and denominator the relative speeds of the different methods of computing the $c_{SM}$ class do not seem to change in our experience, and hence we give examples with integer coefficients for comparison and because in practice we would prefer to work over a finite field whenever possible.  
  
In Table \ref{table:csmResults} and Table \ref{table:csmResultsFiniteFeild} we give the running times of the algorithm discussed here in comparison to several other algorithms which use inclusion-exclusion to compute the $c_{SM}$ class and Euler characteristic. All methods shown in the tables are implemented and run in Version 1.7 of Macaulay2 \cite{M2} unless otherwise noted, the numeric implementations use Bertini \cite{Bertini} (in the case of the Macaulay2 \cite{M2} version) and PHCpack \cite{PHCpack} (in the case of Sage \cite{sage} versions). Timings for a Sage \cite{sage} implementation of Algorithm \ref{algorithm:hybrid} (denoted csm\textunderscore dir (Th.\ \ref{theorem:MainTheorem}) in Tables \ref{table:csmResults} and \ref{table:csmResultsFiniteFeild}) and Algorithm 3 of \cite{helmer2014algorithm} (that is the inclusion-exclusion only algorithm, denote csm\_I\_E in Tables \ref{table:csmResults} and \ref{table:csmResultsFiniteFeild}) are also given. All test computations were performed on a computer with an Intel i5-450M processor and 4 GB of RAM.

In the tables in this section we take  
\begin{align*}
&V_1=V \left( {21} x_{0}^{2} + {5} x_{1}^{2} - {24} x_{2}^{2}+ 13 x_{3}^{2} + 8 x_{4}^{2} - {10}{6} x_{5}^{2} + 2 x_{6}^{2} +{14} x_{7}^{2},x_1^2x_5-x_0^2x_4 \right), \\
&V_2=V \left( 3x_0^2+19x_1^2+{8}x_2^2+12x_3^2+13x_4^2,{3}{4}x_0+{5}x_1+{19}x_2+{127}x_3-15x_4,\right. \\ &\left.{27}x_0^2-x_4^2 \right), \\
&V_3=V\left(3x_0^2+19x_1^2+{8}x_2^2+12x_3^2+9x_4^2+{3}x_5^2+{2}{5}x_6^2,x_2^3x_3-x_3x_5^3\right), \\
&V_4=V\left(5x_0^2+9x_1^2+{7}{9}x_2^2+2x_3^2+{3}{5}x_4^2+{7}{3}x_5^2,23x_0+9x_1+7x_2+2x_3+4x_4 \right.\\
&+\left. {3}{2}x_5,x_2x_0x_3-x_3x_5x_4 \right), \\
&V_5=V \left({3} x_{0}^{2} + {17} x_{1}^{2} - 47 x_{2}^{2}+ 3 x_{3}^{2} + 38 x_{4}^{2} - {727} x_{5}^{2} + 12 x_{6}^{2},x_0x_6-x_0^2,{43} {x}_{0}^{2}+ \right.\\ 
&{52} {x}_{0} {x}_{1}+94 {x}_{1}^{2}+{5} {x}_{0} {x}_{2}+{13} {x}_{1} {x}_{2}+{x}_{2}^{2}+{x}_{0} {x}_{3}+4 {x}_{1}
     {x}_{3}+{98} {x}_{2} {x}_{3}+{x}_{3}^{2}+{x}_{0} {x}_{4}+ \\
     &{74} {x}_{1} {x}_{4}+{13} {x}_{2} {x}_{4}+{71} {x}_{3} {x}_{4}+{23} {x}_{4}^{2}+{12}
     {x}_{0} {x}_{5} +{2}{x}_{1} {x}_{5}+{x}_{2} {x}_{5}+{65} {x}_{3} {x}_{5}+{92} {x}_{4} {x}_{5}+\\ &\left.{27} {x}_{5}^{2}+{5} {x}_{0} {x}_{6}+{103} {x}_{1}
     {x}_{6}+38 {x}_{2} {x}_{6}+{x}_{3} {x}_{6}+{6} {x}_{4} {x}_{6}+2 {x}_{5} {x}_{6}+{95} {x}_{6}^{2} \right).
\end{align*} 
$V_6$ is a smooth variety of degree eight and codimension three in $\pp^{10}$ defined by three random quadratic forms. $V_7$ is a variety of degree eight and codimension three in $\pp^{10}$ defined by two random quadratic forms and one random degree two polynomial which defines a singular scheme.
\begin{align*}
&V_8=V(-2x_0^3+24x_1^3+x_2^3+x_3^3-7x_4^3,-9x_0^2+43x_1^2+x_2^2-98x_3^2-73x_4^2, x_1x_4 \\ &- x_0x_4, x_1x_0) \\
&V_9=V(-3x_0^3+4x_1^3+x_2^3+x_3^3-7x_4^3-15x_5^3, -31x_0+14x_1-9x_2+17x_3 \\ &-7x_4-15x_5,(x_1 - x_5)x_4,x_3x_0).
\end{align*}For $V_1 \subset\pp^7$ we have $\deg(V_1)=4$ and $\codim(V_1)= 2$, for  $V_2 \subset \pp^4$ we have $\deg(V_2)=4$ and $\codim(V_2)= 3$, for $V_3 \subset \pp^6$ we have $\deg(V_3)=6$ and $\codim(V_3)= 2$, for $V_4 \subset \pp^5$ we have $\deg(V_4)=2$ and $\codim(V_4)= 3$, and for $V_5 \subset \pp^6$ we have $\deg(V_5)=8$ and $\codim(V_5)= 3$. The variety $V_8$ has dimension zero in $\pp^4$ and $\deg(V_8)=24$. The variety $V_9$ has dimension one in $\pp^5$ and $\deg(V_9)=12$.


The method CSM (Aluffi \cite{aluffi2003computing}) is the implementation of Aluffi described in \cite{aluffi2003computing}, this implementation uses inclusion-exclusion and considers the projective degrees as the multi-degree of the blowup of $\pp^n$ along the subscheme defined by the partial derivatives for each hypersurface considered in the inclusion-exclusion. This method is implemented in Macaulay2 \cite{M2}. The method CSM (Jost \cite{jost2013algorithm}) is the algorithm of Jost which computes the projective degrees by finding the degrees of residual sets via saturation, this method also uses inclusion-exclusion. This method is implemented in Macaulay2 \cite{M2}. The method csm\textunderscore dir (Th.\ \ref{theorem:MainTheorem}) is the method of Algorithm \ref{algorithm:hybrid}. This method is implemented in Macaulay2 \cite{M2} and in Sage \cite{sage}. The method csm\textunderscore I\textunderscore E (\cite{helmer2014algorithm}) is the method described by the author in \cite{helmer2014algorithm}, this method uses inclusion-exclusion combined with (\ref{eq:csm_hyper}) and uses the result of Theorem \ref{theorem:projective_deg_theorem} to compute the projective degrees. This method is implemented in Macaulay2 \cite{M2} and in Sage \cite{sage}.

In Table \ref{table:csmResults} computations are performed over $\mathbb{Q}$. In Table \ref{table:csmResultsFiniteFeild} computations are performed over $\mathbb{GF}(32749)$. While the $c_{SM}$ class is only defined over fields of characteristic zero doing the computations over  $\mathbb{GF}(32749)$ yields the same $c_{SM}$ classes found by working over $\mathbb{Q}$ for all examples considered here. Previous papers on computing $c_{SM}$ classes such as Aluffi \cite{aluffi2003computing}, Jost \cite{jost2013algorithm} and the author \cite{helmer2014algorithm} have also performed test computations over a finite field.  


For the smooth variety $V_6$ the computation of $c_{SM}(V_6)$ by Algorithm \ref{algorithm:mainAlg} or Algorithm \ref{algorithm:hybrid} calculates the singularity subscheme $Y$ of $V_6$ first, but since $V_6$ is smooth then $s(Y,\pp^n)=0$ is obtained immediately after $Y$ is computed without the need to calculate the projective degrees. Hence in this case very nearly all of the time is spent computing the singularity subscheme $Y$. Similarly, for the variety $V_7$ the computation of $c_{SM}(V_7)$ using Algorithm \ref{algorithm:hybrid} spends the majority of the computation time finding the singularity subscheme of $V_2$ (approximatively $90\%$ of the $59.5$s average runtime). 

For the varieties $V_8$ and $V_9$ the result of Theorem \ref{theorem:MainTheorem} is not directly applicable and hence the method csm\textunderscore dir (Th.\ \ref{theorem:MainTheorem}), which is our implementation of Algorithm \ref{algorithm:hybrid}, must apply Corollary \ref{corr:inculusionexclusion_ignore_smooth_big}. We see that for the case of the variety $V_8$, Algorithm \ref{algorithm:hybrid} still provides a marked advantage in comparison to inclusion-exclusion only. However for $V_9$, Algorithm \ref{algorithm:hybrid} performs worse than the inclusion-exclusion method of csm\textunderscore I\textunderscore E (\cite{helmer2014algorithm}). It seems that in the cases where result of Theorem \ref{theorem:MainTheorem} is not directly applicable the benefit of using Algorithm \ref{algorithm:hybrid} as opposed to the inclusion exclusion algorithm \cite{helmer2014algorithm} is less clear, in some cases Algorithm \ref{algorithm:hybrid} is beneficial and in some cases it is not. The timings of Algorithm \ref{algorithm:hybrid} in the cases where Theorem \ref{theorem:MainTheorem} is not directly applicable are likely strongly dependent on the structure of the singularity subschemes encountered in the partial inclusion exclusion, which is not easy to determine prior to computing these singularity subschemes. It is however not clear to us, at present, precisely what properties of example $V_8$ makes it more advantageous than example $V_9$ for applying Algorithm \ref{algorithm:hybrid} to compute the $c_{SM}$ class; there are many factors at play in such a consideration and it is not clear to us which factor is the primary one.   

\begin{table}[h]\resizebox{\linewidth}{!}{
\begin{tabular}{@{} l *8c @{}}
\toprule 
 \multicolumn{1}{c}{{ \textbf{INPUT}}}    & {CSM (Aluffi) [M2]}  &  {CSM (Jost \cite{jost2013algorithm}) [M2] }  &  {\textbf{ \color{chameleongreen2} csm\textunderscore dir (Th.\ \ref{theorem:MainTheorem})} [M2]} &  { csm\textunderscore I\textunderscore E (\cite{helmer2014algorithm})[M2] } & {{  csm\textunderscore dir (Th.\ \ref{theorem:MainTheorem})} [Sa]} &  { csm\textunderscore I\textunderscore E (\cite{helmer2014algorithm}) [Sa] } \\
 \midrule 
     $V_1 \subset \pp^7$ & - & 47.6s  & \color{chameleongreen2} 0.2s &1.1s&0.2s &0.6s\\
  $V_2 \subset \pp^4$  & - & 0.3s  & \color{chameleongreen2} 0.1s &0.3s&0.1s &0.4s\\
  $V_3 \subset \pp^6$ & - & 1.5s  & \color{chameleongreen2} 0.2s &0.9s&0.2s &0.4s\\
    $V_4 \subset \pp^5$ & -& -& \color{chameleongreen2} 0.1s & 0.9s&0.2s &1.1s\\
    $V_5 \subset \pp^6$ & -& 132.6& \color{chameleongreen2}  0.5s & 1.9s&0.5s &7.0s \\
    $V_6 \subset \pp^{10}$ & -& -& \color{chameleongreen2}  21.5s &- &3.7s& -\\
    $V_7 \subset \pp^{10}$ & -& -& \color{chameleongreen2}  59.5s & 95.8s & 38.7s& 123.2s\\
    $V_8 \subset \pp^{4}$ & -& 67.9s& \color{chameleongreen2}  0.7s & 20.9s&0.8s &34.1s \\
    $V_9 \subset \pp^{5}$ & -& 311.5s& \color{chameleongreen2}  19.8s & 5.3s& 13.4s& 11.4s\\
\bottomrule
 \end{tabular}} \caption{ Run times of different algorithms for computing $c_{SM}(V)$ and $\chi(V)$ for $V $ a complete intersection subscheme of $\pp^n$ . We use - to denote computations that were stopped after ten minutes (600 s). All computations are performed over the finite field $\mathbb{GF}(32749)$. For Algorithm 3 of the author \cite{helmer2014algorithm} (denoted csm\textunderscore I\textunderscore E) and for csm\textunderscore dir (Th.\ \ref{theorem:MainTheorem}) we include timings for the Macaulay2 \cite{M2} and Sage \cite{sage} implementations; these are denoted [M2] and [Sa], respectively.
 \label{table:csmResultsFiniteFeild}}
\end{table}

Overall in Tables \ref{table:csmResults} and \ref{table:csmResultsFiniteFeild} we see that, for the types of examples for which the result of Theorem \ref{theorem:MainTheorem} is applicable it offers a performance increase over the algorithms which use inclusion-exclusion. Additionally we see that overall, at least for our examples, the symbolic implementations tend to be faster than the numeric implementations, even when the symbolic versions run over $\mathbb{Q}$. We also see that we can expect a further speed-up using the symbolic implementations when they are run over a finite field. 

From the results in the tables we can conclude that Algorithm \ref{algorithm:mainAlg} can provide a significant performance improvement for the computation of $c_{SM}(V)$ for some $V$, particularly when $ V=V(f_0,\dots,f_m)$ is a complete intersection subscheme of $\pp^n$ such that $V(f_1,\dots,f_{m-1})$ is smooth. The performance gain offered by Algorithm \ref{algorithm:hybrid} when one must remove several of the generators of $I=(f_0,\dots,f_m)$ to obtain a smooth scheme is less clear, in some cases it seems to offer a performance improvement however in some cases the cost of computing several singularity subschemes and their Segre classes is too great for us to see any benefit in using Algorithm \ref{algorithm:hybrid} over pure inclusion-exclusion. 

In any case Algorithm \ref{algorithm:mainAlg} and Algorithm \ref{algorithm:hybrid} complement other methods to compute $c_{SM}$ classes and Euler characteristics by offering an effective way to improve performance for a certain class of examples. Additionally it seems likely that, with some minor heuristic adjustments to the criterion one uses to decide whether to use the specialized inclusion-exclusion of Corollary \ref{corr:inculusionexclusion_ignore_smooth_big} or the usual inclusion-exclusion of Proposition \ref{propn:csm_higher_codim}, the method of Algorithm \ref{algorithm:hybrid} would be able to offer marked improvement in many cases, and in worst cases to perform similarly to an algorithm using only inclusion-exclusion.

\subsection{Probabilistic Analysis}
\label{subsection:probabilityTheoretic}

Here we consider the probability of correctly computing the $c_{SM}$ class using Algorithm \ref{algorithm:mainAlg} or Algorithm \ref{algorithm:hybrid} above. Note that these algorithms depend on correctly computing the projective degrees $g_i$ using Algorithm 1 of the author \cite{helmer2014algorithm} (which is constructed from the result of Theorem 4.1 of \cite{helmer2014algorithm}, stated as Theorem \ref{theorem:projective_deg_theorem} above), or equivalently on correctly computing the Segre class $s(Y,\pp^n)$ for a certain subscheme $Y$ of $\pp^n$ as described in Algorithm 2 of \cite{helmer2014algorithm}. Recall that the projective degrees $(g_0, \dots, g_n)$ of a rational map $\varphi:\pp^n \dashrightarrow \pp^m$ are defined in \eqref{eq:projective_degrees_def} above; the Segre class of a subscheme of $\pp^n$ is defined in \eqref{eq:segre_def}. 

We note that the probabilistic analysis given in this subsection requires the results proved in Appendix \ref{appendix:explicitRatMap}. We further note that for this subsection, in light of the setting considered in Appendix \ref{appendix:explicitRatMap}, we require only that our field $k$ is algebraically closed and of sufficiently large characteristic and do not require that it has characteristic zero. As mentioned above, due to analytic elements of the construction of the $c_{SM}$ class by MacPherson \cite{macpherson1974chern}, the $c_{SM}$ class is not technically defined in finite characteristic. However the projective degrees and Segre classes are constructed algebraically and are hence defined in this setting. Thus the analysis of this section gives a bound on the probability of success for the computation of the Segre class of a subscheme of $\pp^n$ using Algorithm 2 of \cite{helmer2014algorithm} (which is the only probabilistic computation required for Algorithms \ref{algorithm:mainAlg} and \ref{algorithm:hybrid}) over an algebraically closed field with either characteristic zero or sufficiently large positive characteristic, see Appendix \ref{appendix:explicitRatMap} for further details. 

We note that, speaking in the sense of algebraic geometry in the terminology of books such as Sommese and Wampler \cite{sommese2005numerical}, Algorithm 1, 2 and 3 of \cite{helmer2014algorithm}, and Algorithm \ref{algorithm:mainAlg} or Algorithm \ref{algorithm:hybrid} above yield the correct result for a choice of objects lying in an open dense Zariski set of the corresponding parameter space. In the sense of \S4.4 of Sommese and Wampler \cite{sommese2005numerical} such algorithms are said to succeed with ``algebraic probability one" (this is allowing for an infinite parameter space), see \cite[Definition 4.4.1, Theorem 4.4.2]{sommese2005numerical} for a discussion of this. 

That said, in this subsection, we will instead focus on the probability of computing the correct value for the projective degree $g_i$ using Algorithm 1 of the author \cite{helmer2014algorithm} as it would be implemented on a computer, where we must instead pick random elements from a finite subset of the parameter space. Such a random choice may not necessarily be general in the sense of algebraic geometry. From the probability bound on the correct computation of a given projective degree $g_i$ given in Proposition \ref{propn:prob_analysis} below we immediately obtain a bound on the probability of correctly computing the $c_{SM}$ class using Algorithm \ref{algorithm:mainAlg} or Algorithm \ref{algorithm:hybrid} above since the only probabilistic choices in these algorithms are those involved in computing the projective degrees $(g_0,\dots, g_n)$ of the appropriate rational map. 

\begin{propn}
Suppose we make a random choice of scalars from some finite set $\mathfrak{S}$ in a field $k$ in the steps of Algorithm 1 of \cite{helmer2014algorithm}. Also suppose we have a homogeneous ideal $I=(f_0, \dots, f_m)$ in $k[x_0,\dots,x_n]$ with $\deg(f_j)=d$ for all $j$ (since $V(I)$ in $\pp^n$ we may assume this without loss of generality). Also let $D=(m+n+1)2^n (d+1)^{m+1}$. Then the probability of correctly computing a given projective degree $g_i$ of the rational map $\phi:\pp^n\dashrightarrow \pp^m$, $\phi: p \mapsto (f_0(p):\cdots:f_m(p))$ defined by the ideal $I$ using Algorithm 1 of \cite{helmer2014algorithm} by choosing random scalars in $\mathfrak{S}$ is $$
P(g_i\; \mathrm{is\; computed\; correctly})\geq\left(1-\frac{d^{2i}}{|\mathfrak{S}|} \right)\cdot \left(1-\frac{D}{|\mathfrak{S}|} \right).
$$\label{propn:prob_analysis}
\end{propn}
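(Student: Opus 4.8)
The engine of the proof is the Schwartz--Zippel lemma: a fixed nonzero polynomial $g \in k[y_1,\dots,y_N]$ of total degree $\delta$ vanishes at a point of $\mathfrak{S}^N$ chosen uniformly at random with probability at most $\delta/|\mathfrak{S}|$. The strategy is to view the scalars $\lambda_{\ell,j}$, $\vartheta_j$, $\mu_{\ell,j}$, $\nu_j$ appearing in Theorem \ref{theorem:projective_deg_theorem} as coordinates on an affine parameter space, to recall from that theorem that the set of choices for which Algorithm 1 of \cite{helmer2014algorithm} returns the correct value of $g_i$ is a nonempty Zariski-open subset, and then to exhibit the complement of a suitable such open set as the zero locus of a product $\Delta_1 \cdot \Delta_2$ of two explicit nonzero polynomials whose degrees we can estimate by $d^{2i}$ and $D$ respectively. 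Existence of these polynomials is already guaranteed by Theorem \ref{theorem:projective_deg_theorem}; the proposition is the quantitative refinement, so the real work is the degree bookkeeping.

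I would organise the argument around the two stages of the computation of $g_i = \mathrm{card}(\phi^{-1}(\pp^{m-i}) \cap \pp^i)$, where $\phi\colon \pp^n \dashrightarrow \pp^m$ is the map attached to $I$. The first stage cuts, by the $i$ general degree-$d$ forms $P_1,\dots,P_i$ (general $k$-combinations of the $f_j$) and the $n-i$ general hyperplanes $L_1,\dots,L_{n-i}$, a zero-dimensional scheme whose part away from $V(I)$ should consist of exactly $g_i$ reduced points. The polynomial $\Delta_1$, in the scalars $\lambda_{\ell,j}$ and $\mu_{\ell,j}$ alone, witnesses this: since the condition is that a system of $i$ forms of degree $d$ on a general $\pp^i$ has the expected number of solutions, all simple, a resultant/discriminant estimate for such a system bounds $\deg\Delta_1$ by $d^{2i}$ (the exponent doubling because the reducedness condition couples the system with an auxiliary system of the same degrees). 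The second stage adjoins the affine relation $L_A = 1 - \sum_j \nu_j x_j$ and the Rabinowitsch relation $S = 1 - T\sum_j \vartheta_j f_j$, then takes the $k$-dimension of the Artinian quotient; correctness here means $L_A$ meets none of the relevant points on the hyperplane at infinity, $\sum_j \vartheta_j f_j$ is nonzero at each of the $g_i$ retained points while $S$ still excises the entire component over $V(I)$, and no spurious component survives. Tracking degrees through the elimination of $T$, the affine localisation, and the iterated passage to the irrelevant-ideal saturation, and using the explicit form of the rational map from Appendix \ref{appendix:explicitRatMap} together with effective Nullstellensatz/elimination bounds, produces a polynomial $\Delta_2$ of degree at most $(m+n+1)\,2^n(d+1)^{m+1} = D$, whose degree in the scalars $\nu_j,\vartheta_j$ is bounded by $D$ uniformly over all choices of $\lambda_{\ell,j},\mu_{\ell,j}$ with $\Delta_1 \ne 0$.

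For the conclusion I would condition on the independent groups of scalars. Applying Schwartz--Zippel to $\Delta_1$ gives that the first stage is correct with probability at least $1 - d^{2i}/|\mathfrak{S}|$; conditional on any such good choice of $\lambda_{\ell,j},\mu_{\ell,j}$, applying Schwartz--Zippel to $\Delta_2$ in the remaining, independently chosen scalars $\nu_j,\vartheta_j$ gives conditional probability at least $1 - D/|\mathfrak{S}|$; averaging over the good first-stage choices yields $P(g_i \text{ computed correctly}) \ge (1 - d^{2i}/|\mathfrak{S}|)(1 - D/|\mathfrak{S}|)$, as claimed, and this bound passes verbatim to Algorithms \ref{algorithm:mainAlg} and \ref{algorithm:hybrid} since the only random choices there are those that compute the projective degrees. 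The main obstacle is precisely the degree estimate for $\Delta_2$: ``computed correctly'' is only a Zariski-open condition a priori, and realising it as the nonvanishing of a single polynomial of degree $D$ --- rather than merely of finite degree --- requires effective versions of the Bertini transversality and Nullstellensatz inputs underlying Theorem \ref{theorem:projective_deg_theorem}, made uniform in all auxiliary parameters; this is exactly where the factors $2^n$ and $(d+1)^{m+1}$ in $D$ must be controlled, and pinning them down rather than settling for ``some constant'' is the crux.
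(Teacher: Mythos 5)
Your high-level skeleton matches the paper's: split the random scalars into two independent groups, realise the failure locus of each group as the zero set of an explicit polynomial, apply Schwartz--Zippel to each, and multiply the conditional probabilities. The final formula you write down is the correct one. However, you have attached the two degree bounds to the \emph{wrong} groups of scalars, and the justification you offer for the harder of the two bounds is a genuine gap. In the paper, the polynomial of degree at most $D=(m+n+1)2^n(d+1)^{m+1}$ lives in the variables $\lambda_{\ell,j},\mu_{\ell,j}$ (your first stage): it is exactly the polynomial $F$ produced by Proposition \ref{propn:projectiveDegreeExplicit} in the appendix, whose degree is controlled by bounding $\deg(\Psi_\Sigma)$ and $\deg(\Delta')$ via Heintz's affine B\'ezout inequality. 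The polynomial of degree at most $d^{2i}$ lives in the variables $\vartheta_j,\nu_j$ (your second stage), and it is the \emph{easy} one: once $\lambda,\mu$ are good, the relevant set $W$ consists of $s\le d^i$ points, and the condition on $\vartheta,\nu$ is just that the two linear forms $\sum_j\vartheta_j f_j$ and $\sum_j\nu_j x_j$ do not vanish at any of them, i.e.\ the nonvanishing of a product of $2s\le 2d^i\le d^{2i}$ linear factors. No effective Nullstellensatz or elimination is needed there.

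The gap in your argument is the claimed bound $\deg\Delta_1\le d^{2i}$ for the bad locus in $(\lambda,\mu)$-space. The ``resultant/discriminant estimate with exponent doubling'' you invoke is not a proof, and there is no reason to expect such a bound: the bad locus for $(\lambda,\mu)$ includes both the excess-intersection locus (where the cut meets $V(f_0,\dots,f_m)$ or the closure of the graph improperly) and the non-transversality locus, and the only bound the paper establishes for it is $D$, which for small $i$ is vastly larger than $d^{2i}$. Since the two factors in the final product happen to multiply to the same expression regardless of which group they are attached to, your formula is not visibly wrong, but the proof as structured rests on an unestablished (and most likely false) degree estimate; conversely, the bound of $D$ you assign to the $(\vartheta,\nu)$-polynomial is valid but misses that the true bound there is $d^{2i}$, which is what the statement actually requires. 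To repair the argument, swap the roles: cite Proposition \ref{propn:projectiveDegreeExplicit} for the degree-$D$ polynomial $F_1(\lambda,\mu)$, and for the second stage write down the product of the values of the two linear forms over the $s\le d^i$ points of $W$ and bound its degree by $d^{2i}$ directly.
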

\begin{proof}
Algorithm 1 of \cite{helmer2014algorithm} is based on Theorem 4.1 of \cite{helmer2014algorithm}, given as Theorem \ref{theorem:projective_deg_theorem} above. This theorem states that one may compute the projective degrees $(g_0, \dots , g_n)$ as $$
g_i= \dim_k \left( k[x_0, \dots , x_n,T]/(P_1 +\cdots +P_{i}+L_1+\cdots +L_{n-i}+L_A+S)\right).
$$Here $P_{\ell},L_{\ell},L_A$ and $S$ are ideals in $k[x_0,\dots , x_n,T]$  with \begin{align*}
P_{\ell}& = \left( \sum_{j=0}^m \lambda_{\ell,j} f_j\right), \;\;\; \lambda_{\ell,j} \mathrm{\; a \; general \; scalar \; in \;}k,\; \ell = 1,\dots,n,\\
S& =  \left( 1-T \cdot \sum_{j=0}^m \vartheta_{j} f_j \right), \;\;\; \vartheta_{j} \mathrm{\; a \; general \; scalar \; in \;}k,\\
L_{\ell}&= \left( \sum_{j=0}^n \mu_{\ell,j} x_j \right), \;\;\; \mu_{\ell,j} \mathrm{\; a \; general \; scalar \; in \;}k,\; \ell = 1,\dots,n,\\
L_{A}&= \left(1- \sum_{j=0}^n \nu_{j} x_j \right), \;\;\; \nu_{j} \mathrm{\; a \; general \; scalar \; in \;}k.
\end{align*}Let $W=V(P_{1})\cap \cdots \cap V(P_i)\cap V(L_{1})\cap \cdots \cap V(L_{n-i}) \subset k^{i \times m} \times k^{n-i \times n}  \times \pp^n.$ By Proposition \ref{propn:projectiveDegreeExplicit} below we have that the projective degree $g_i$ will be given by $$
g_i = \mathrm{card} (V(P_{1})\cap \cdots \cap V(P_i)\cap V(L_{1})\cap \cdots \cap V(L_{n-i})-V(f_0,\dots,f_m)),
$$ if we choose $\lambda \in k^{i \times m}$ and $\mu \in k^{n-i \times n}$ so that some polynomial $F_1(\lambda, \mu) \neq 0$ where $\deg(F_1)\leq D$. 

Now by the Schwartz-Zippel Lemma (see Schwartz \cite{schwartz1980fast}, Zippel \cite{zippel1979probabilistic}, or DeMillo and Lipton \cite{demillo1978probabilistic}) we have that the probability of $F_1 = 0$ for random choices of $ [\lambda_{\ell,j}]$ and $[\mu_{\ell,j} ]$ in $\mathfrak{S}$ is given by $$
P(F_1 = 0) \leq \frac{D}{|\mathfrak{S}|},
$$ this is the probability that we will randomly choose the scalars $\lambda,\mu$ incorrectly.


Now assume that we have chosen our scalars $[\lambda_{\ell,j}]$ and $[\mu_{\ell,j} ]$ correctly, so that the projective degree $g_i$ is the number of points in the zero dimensional set $W=V(P_{1})\cap \cdots \cap V(P_i)\cap V(L_{1})\cdots \cap V(L_{n-i})-V(f_0,\dots,f_m)$ in $\pp^n$. Then for the next step we must choose the scalars $\vartheta_{0},\dots, \vartheta_{m} $ which give the ideal $S$. Suppose that the set $W$ contains points $\left\lbrace p_1, \dots, p_s \right\rbrace$; then we require that our $\vartheta$ be chosen such that $\vartheta_0 f_0(p_j)+ \cdots +\vartheta_m f_m(p_j) \neq 0$ for all $p_j \in W$. Note that it is sufficient that $$
F_2(\vartheta_{0},\dots, \vartheta_{m})=(\vartheta_0f_0(p_1)+ \cdots +\vartheta_m f_m(p_1) )\cdots (\vartheta_0 f_0(p_s)+ \cdots +\vartheta_m f_m(p_s)) \neq 0,
$$ further note that, by B\'ezout's Theorem, $\deg(F_2)=s \leq d^i$. 

Our choice of scalars $\nu_0, \dots,\nu_n$ to define $L_A$ proceeds similarly. Let $y_{\nu}(x_0,\dots,x_n)=\nu_0x_0 + \cdots + \nu_nx_n$, specifically we require that our $\nu$ are chosen such that $y_{\nu}(p_j) \neq 0 $ for all $p_j \in W$. Again it is sufficient that $$
F_3=y_{\nu}(p_1) \cdots y_{\nu}(p_s) \neq 0, 
$$further note that, by B\'ezout's Theorem, $\deg(F_3)=s \leq d^i$. Thus we have that if our $\vartheta$ and $\nu$ are chosen such that $F_1 \cdot F_2 \neq 0$, given the correct choice of our first set of constants, then the projective degree will be computed correctly. 

Now consider a random choice of the scalars $\vartheta$ and $\nu$; we may again apply the Schwartz-Zippel Lemma. Using this Lemma, given that $F_1\neq 0$, we have that the probability of $F_2\cdot F_3 = 0$ for a random choice of our scalars $\vartheta_{0},\dots, \vartheta_{m} $ and $ \nu_0, \dots, \nu_n$ in $\mathfrak{S}$ is $$
P(F_2 \cdot F_3= 0 \; |\;F_1 \neq 0) \leq \frac{d^{2i}}{|\mathfrak{S}|}.
$$This is the probability that we will randomly choose the scalars $\vartheta, \nu$ incorrectly.

Putting this together we have that, for a random choice of all scalars, the probability that we will correctly compute the projective degree $g_i$ is $$P(F_1\cdot F_2 \neq 0 \; \mathrm{AND}\; F_1 \neq 0).$$ We may now write an expression for this: \begin{align*}
P(F_2\cdot F_3 \neq 0 \; \mathrm{AND}\; F_1 \neq 0)&=P(F_2\cdot F_3 \neq 0 \; |\; F_1 \neq 0) \cdot P(F_1 \neq 0) \\
&=(1-P(F_2\cdot F_3 = 0 \; |\; F_1 \neq 0) )\cdot P(F_1 \neq 0)\\
&=(1-P(F_2\cdot F_3 = 0 \; |\; F_1 \neq 0) )\cdot(1- P(F_1 = 0))\\
&\geq\left(1-\frac{d^{2i}}{|\mathfrak{S}|} \right)\cdot \left(1-\frac{D}{|\mathfrak{S}|} \right).
\end{align*}

Thus we have that the probability that we will correctly compute a given projective degree $g_i$ using Algorithm 1 of the author \cite{helmer2014algorithm} by choosing random scalars in a set $\mathfrak{S}$ is $$
P(g_i\; \mathrm{is\; computed\; correctly})\geq\left(1-\frac{d^{2i}}{|\mathfrak{S}|} \right)\cdot \left(1-\frac{D}{|\mathfrak{S}|} \right).
$$Note that as $|\mathfrak{S}|\to \infty$ the probability that $g_i$ is computed correctly goes to one.

\end{proof}

\begin{remark}
Given the result of Proposition \ref{propn:prob_analysis}, the probability that we will correctly compute a given Segre class $s(V(I),\pp^n)$ (for $I=(f_0,\dots,f_m)$ homogenous of degree $d$ in $k[x_0,\dots,x_n]$) using Algorithm 2 of \cite{helmer2014algorithm} by choosing random scalars in $\mathfrak{S} \subset k$ ($|\mathfrak{S}|$ large and finite) is the probability that we will compute all of the associated projective degrees $(g_0,\dots,g_n)$ correctly. Note that by (10) of \cite{helmer2014algorithm}  for $i < \codim(V(I))$ we have that $g_i=d^i$. Also, from \eqref{eq:projective_degrees_def} we have that $g_i=0$ for $i>\min(m,n)$. Now let $D=(m+n+1)2^n (d+1)^{m+1}$ be as in Proposition \ref{propn:prob_analysis}. Then we have that\scriptsize \begin{equation*}
P(s(V(I),\pp^n) \mathrm{\; is \; computed \; correctly }) \geq \left(1-\frac{D}{|\mathfrak{S}|} \right)^{\min(m,n)-\codim(V(I))} \cdot \prod_{i=\codim(V(I))}^{\min(m,n)} \left(1-\frac{d^{2i}}{|\mathfrak{S}|} \right)
\end{equation*}\normalsize  Further note that this directly gives us the probability of success of Algorithm \ref{algorithm:mainAlg} above by choosing random scalars in $\mathfrak{S}$ since Algorithm \ref{algorithm:mainAlg} requires the computation of one Segre class using Algorithm 2 of \cite{helmer2014algorithm}. From this one could also give an estimate on the probability of success of Algorithm \ref{algorithm:hybrid} above for a particular example once the number of required inclusion-exclusion steps was known. 
\end{remark}

\subsection{Experimental Probabilistic Tests}
\label{subsection:probExper}\label{subsection:ExpermentalProbability}
In this subsection we consider several examples to evaluate the probability, in practice, of computing the Segre class correctly using Algorithm 2 of the author \cite{helmer2014algorithm} in our test computation environment using our Macaulay2 \cite{M2} implementation over various finite fields. We note that to compute the $c_{SM}$ class using Algorithm \ref{algorithm:mainAlg} we must compute one Segre class for a subscheme of $\pp^n$; thus this probability is equivalent to the probability that the $c_{SM}$ class of a given example will be computed correctly using Algorithm \ref{algorithm:mainAlg}. When using Algorithm \ref{algorithm:hybrid} we may need to compute several Segre classes, depending on the number of partial inclusion-exclusion steps required.

This testing is, of course, by no means conclusive but we feel that it does at least provide a rough idea of the minimum sizes of finite fields one should use to have a high probability of obtaining a correct result (when doing these computations with the symbolic implementation over a finite field). Specifically we try running Segre class computations for which we know the correct value over finite fields of different sizes to get an experimental estimate on the probability of failure for the authors algorithm to compute Segre classes (Algorithm 2 of \cite{helmer2014algorithm}); this then corresponds directly to the probability of correctly computing the $c_{SM}$ class using Algorithm \ref{algorithm:mainAlg}. 

Work in $\pp^4=\Proj(k[x_0,\dots, x_4])$, and take $V_1\subset \pp^4$ to be a smooth complete intersection of degree $36$ generated by two polynomials of degree $6.$ By Chapter 4 of Fulton \cite{fulton} (see in particular Example 4.26 and others) we have that $$s(V_1,\pp^4)=48h^4-16h^3+4h^2 \in A^*(\pp^4) \cong \ZZ[h]/(h^5).$$ Now take $V_2 \subset \pp^4$ to be $$V_2=V(x_0^3(x_1x_2^2-x_1^3),x_1x_2^2(x_0^3-x_1x_4x_3),x_4^6-x_4^3x_2^2x_1) \subset \pp^4,$$ using the method of Aluffi \cite{aluffi2003computing} (which is not probabilistic) we may verify that $$
s(V_2,\pp^4)=-1944h^4+54h^3+9h^2 \in A^*(\pp^4) \cong \ZZ[h]/(h^5).
$$ Note that $V_2$ is neither smooth nor a complete intersection; $\deg(V_2)=9$. 

Also take $V_3 \subset \pp^4$ to be $$V_3=V(x_0^6,x_1x_2(x_0^2x_3x_4-x_1x_4^2x_3) ) \subset \pp^4,$$ again using the method of Aluffi \cite{aluffi2003computing} (which is not probabilistic) we may verify that $$
s(V_3,\pp^4)=3888h^4-432h^3+36h^2\in A^*(\pp^4) \cong \ZZ[h]/(h^5).
$$ Note that $V_3$ is not smooth and $\deg(V_3)=36$. 

We have tested our implementation of Algorithm 2 of \cite{helmer2014algorithm} on these examples to see how many incorrect results we can expect in 2000 attempts over finite fields of different sizes and found that, in practice, we have a high probability of obtaining a correct answer with most fields we are likely to want to use.

From the experimental results, and from our experience, we feel it is reasonable to conclude that one will obtain the correct result with high probability if one uses a finite field with more than about $25000$ elements, which is quite reasonable when working in a modern computer algebra system. For further assurance one could take a $64$ or $128$ bit prime. We note that for all examples in this subsection we have ran the test Segre class computations below more than $10000$ times working over $\mathbb{GF}(32749)$ and have obtained no incorrect results. We also note that the computation time for our Macaulay2 \cite{M2} implementations for either Algorithm \ref{algorithm:mainAlg}, or Algorithm \ref{algorithm:hybrid} (or of Algorithms 1,2, and 3 of the author \cite{helmer2014algorithm}) changes very little when one moves between the different finite fields considered in these tests.

 \begin{table}[h]\resizebox{\linewidth}{!}{
\begin{tabular}{@{} l *{10}c @{}}
\toprule 
 \multicolumn{1}{c}{{ \textbf{Number of Elements in Finite Field}}}&317 &1021 & 3301 &  5743 & 9001 & 19301& 25073 &27077  &32749   \\ 
 \midrule 
   \# of times $s(V_1,\pp^4)$ computed incorrectly in 2000 attempts& 10&1 &1 &0& 0 & 0& 0& 0&0 \\ 
     \# of times $s(V_2,\pp^4)$ computed incorrectly in 2000 attempts& 83&40 &12 &6& 4 &1& 0&0&0 \\ 
       \# of times $s(V_3,\pp^4)$ computed incorrectly in 2000 attempts&  7& 0 & 0 &1 & 0 &0& 0&0&0 \\
\bottomrule
 \end{tabular}} \caption{Number of times an incorrect value for the Segre class of a subscheme of $\pp^n$ was computed in 2000 attempts over different finite fields. For these tests our Macaulay2 \cite{M2} implementation of our algorithm (that is Algorithm 2 of \cite{helmer2014algorithm}) running in Version 1.82 of Macaulay2 \cite{M2} was used. 
 \label{table:ProbResults}}
\end{table}

\subsubsection*{Acknowledgements}This research was partially supported by the Natural Sciences and Engineering Research Council of Canada. The author would also like to thank \'Eric Schost for many helpful discussions throughout the preparation of this note. 

\newpage
\begin{appendices}
\begin{center}
\section{Appendix: An Explicit Construction of the Projective Degrees of a Rational Map} \label{appendix:explicitRatMap}
\textit{Appendix by Martin Helmer and \'Eric Schost}
\end{center}

Throughout this appendix we let $k$ denote
any algebraically closed field. Our goal in this section is to give a
quantitative version of the proof given by B{\"u}rgisser, Cucker, and
Lotz~\cite{burgisser2005counting} of the existence of projective
degrees of a rational map.


As a matter of convention, upper case letters denote variables and
lower case ones denote vectors with entries in $k$. 


\subsection{Some known results}\label{ssec:known}

Our proof is modelled on the one given by B{\"u}rgisser, Cucker, and
Lotz in~\cite{burgisser2005counting}, and will rely on the same basic
ingredients: namely a result on the cardinality of regular fibers of a
projection, and an algebraic version of Sard's lemma.

Let $f: V \to W$ be a dominant map of irreducible quasiprojective
varieties over $k$, with $V$ and $W$ of the same dimension. We say
that $x \in V$ is a {\em regular point} of $f$ if $x$ is non-singular
on $V$ and $df_x(T_x V) = T_{f(x)} W$ (which implies that $f(x)$ is
non-singular on $W$), and we say that $y \in W$ is a {\em regular
  value} of $f$ if all $x$ in $f^{-1}(y)$ are regular points of $f$.

The following version of Sard's lemma is very close to ones in the
literature, but does not rely on the characteristic of $k$ being $0$.

\begin{lemma}
  With notation as above, let also $D = [k(V):k(W)]$. If $k$ has
  characteristic $0$ or greater than $D$, then the set of regular
  values of $f$ is contained in a hypersurface of $W$.
\end{lemma}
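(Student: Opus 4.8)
The plan is to prove the lemma by controlling the set of non-regular values, that is, the complement of the set of regular values: I will produce a hypersurface $H \subset W$ such that every $y \in W \setminus H$ is a regular value, thereby confining the non-regular values to $H$. This is precisely the content of the statement, since the dense locus of honestly regular values cannot itself be forced into a proper closed subset; the set the lemma places inside a hypersurface is the non-regular one.

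First I would introduce the non-regular locus $\Sigma \subset V$, consisting of the points $x$ that are singular on $V$ together with the smooth points at which the surjectivity $df_x(T_x V) = T_{f(x)} W$ fails. This is a closed subset, being cut out by the singular locus of $V$ and a rank condition on the differential. By the very definition of a regular value, a point $y \in W$ fails to be regular exactly when $f^{-1}(y)$ meets $\Sigma$; hence the non-regular values are exactly $f(\Sigma)$, and the whole lemma reduces to showing that $f(\Sigma)$ lies in a hypersurface of $W$.

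Next I would show that $\Sigma$ is a \emph{proper} subvariety of $V$, and this is the one place the hypothesis on $\mathrm{char}\,k$ is used. In characteristic $0$ the finite extension $k(V)/k(W)$ is automatically separable; if $\mathrm{char}\,k = p > D$ then $p \nmid D$, which forces the inseparable degree — a power of $p$ dividing $D = [k(V):k(W)]$ — to equal $1$, so the extension is again separable. A dominant morphism of irreducible varieties whose function-field extension is separable is generically smooth, so the generic point of $V$ is a regular point of $f$, whence $\Sigma \neq V$ and $\dim \Sigma < \dim V = \dim W$.

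Finally I would push forward to $W$. The image $f(\Sigma)$ is constructible by Chevalley's theorem, with $\dim \overline{f(\Sigma)} \le \dim \Sigma < \dim W$, so its closure is a proper closed subset of the irreducible variety $W$; any nonzero element of the ideal of $\overline{f(\Sigma)}$ cuts out a hypersurface $H \supseteq f(\Sigma)$. Every $y \in W \setminus H$ then has $f^{-1}(y) \subseteq V \setminus \Sigma$ and so is a regular value, which is the assertion of the lemma. The main obstacle is the properness of $\Sigma$ in the middle step: in small positive characteristic the map $f$ may be inseparable, in which case $\Sigma$ contains a dense open subset of $V$, forcing $\Sigma = V$ and $\overline{f(\Sigma)} = W$, so that no confining hypersurface exists. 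The content of the lemma is exactly that the hypothesis $\mathrm{char}\,k = 0$ or $\mathrm{char}\,k > D$ excludes this pathology via separability and generic smoothness.
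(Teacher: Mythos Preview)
Your proof is correct and follows essentially the same approach as the paper: both use the characteristic hypothesis to ensure separability of $k(V)/k(W)$, invoke generic smoothness to conclude that the critical locus is a proper closed subset of $V$, and then bound the dimension of its image in $W$. You also correctly identified that the statement as printed has a slip---the set confined to a hypersurface is the \emph{non-}regular (critical) values, as both your argument and the paper's own proof make clear.
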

\begin{proof}[Proof (sketch).]
  We will follow the proof of Theorem~A.4.10
  in Sommese and Wampler~\cite{sommese2005numerical}, which holds in characteristic $0$,
  and highlight the only parts of the proof that use this
  assumption. The proof invokes Proposition~III.10.6
  in Hartshorne~\cite{Hartshorne77}, applied to the restriction of $f$ to
  Zariski-dense subsets $V'$ and $W'$ of respectively $V$ and $W$ that
  are both non-singular (the image of the singular points has
  dimension less than $s=\dim(V)=\dim(W)$, so this is harmless). In
  particular, the induced extension $k(W') \to k(V')$ has degree $D$
  as well.

  The proof of Proposition~III.10.6 in Hartshorne~\cite{Hartshorne77} proceeds as
  follows. Let $V''$ be an irreducible component of the set of all $x$
  in $V'$ such that that $df_x(T_x V')$ has dimension less than $s$
  (this set is closed); we will prove that the Zariski closure $W''$
  of $f(V'')$ has dimension less than $s$.  We proceed by
  contradiction, assuming that $W''$ has full dimension $s$; this
  implies in particular that $\dim(V'') = s$. This in turn implies
  that $[k(V'') : k(W'')]=D$, so our characteristic
  assumption shows that $k(W'') \to k(V'')$ is a separable extension.
  This separability statement is the only one in the proof
  of~\cite[Proposition~III.10.6]{Hartshorne77} that used the
  characteristic $0$ assumption; once we ensure it, the rest of 
  the proof follows directly and proves that $W''$ has dimension
  less than $s$, a contradiction.
\end{proof}

Next, we establish a result on the number of points in fibers of
regular values; see a proof in B{\"u}rgisser, Cucker, and
Lotz~\cite{burgisser2005counting} for 
the case where $k=\mathbb{C}$. In scheme-theoretic terms, this result can in
essence be deduced from known claims such as ``a quasi finite and
proper map is finite'' and ``the fibers of a finite and \'etale map at
closed points have constant cardinality''; we will give here a very
direct proof.

\begin{lemma}\label{lemma:fiber}
  Let $V \subset \pp^n \times k^N$ be an irreducible algebraic set of
  dimension $N$, and let $\pi: \pp^n \times k^N \to k^N$ be the
  projection on the second factor; suppose that the restriction $\pi_V$ of
  $\pi$ to $V$ is dominant. Then all fibers of regular values of this
  restriction have the same cardinality.
\end{lemma}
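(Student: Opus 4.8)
The idea is to mimic the complex-analytic argument of B{\"u}rgisser, Cucker and Lotz~\cite{burgisser2005counting} — where the restriction of $\pi_V$ over the regular values is a topological covering map of a connected base — but to replace ``covering map'' by ``finite étale morphism''. First I would record that the locus $R\subseteq V$ of regular points of $\pi_V$ is open: the smooth locus of $V$ is open, and the condition that $d(\pi_V)_x$ have full rank $N$ is an open condition. Since $V$ is closed in $\pp^n\times k^N$ and $\pp^n$ is complete, the projection $\pi$ is proper, hence so is $\pi_V$; therefore $\pi_V(V\setminus R)$ is closed in $k^N$, and the set $U$ of regular values, being exactly its complement, is open. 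As $k^N$ is irreducible, $U$ is irreducible, hence connected; if $U=\emptyset$ there is nothing to prove, so assume $U\neq\emptyset$.

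Next I would analyse $\pi^\circ:=\pi_V|_{V^\circ}\colon V^\circ\to U$, where $V^\circ:=\pi_V^{-1}(U)\subseteq R$. The fibers of $\pi^\circ$ are finite: at a regular point $x$ of a fiber $\pi_V^{-1}(y)$ the Zariski tangent space of the fiber is $\ker d(\pi_V)_x=0$ (the differential is an isomorphism there, both sides having dimension $N$), so $x$ is an isolated point of $\pi_V^{-1}(y)$; since every point of the Noetherian set $\pi_V^{-1}(y)\subseteq\pp^n\times\{y\}$ is isolated, the fiber is finite. Thus $\pi^\circ$ is proper (the restriction of the proper $\pi_V$ over the open set $U$) and quasi-finite, hence finite. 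Moreover $V^\circ$ is a smooth variety of dimension $N$ (an open subset of the smooth locus of the $N$-dimensional $V$), $U$ is smooth of dimension $N$, and $d(\pi^\circ)$ is an isomorphism at every closed point; so $\pi^\circ$ is unramified (the coherent sheaf $\Omega_{V^\circ/U}$ vanishes at all closed points, hence identically) and flat (miracle flatness: $V^\circ$ is Cohen--Macaulay, $U$ is regular, and the fibers are $0$-dimensional, equal to $\dim V^\circ-\dim U$). Hence $\pi^\circ$ is finite étale.

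Finally, $\pi^\circ_*\mathcal{O}_{V^\circ}$ is locally free on $U$ of some rank $\rho$, and this rank is locally constant, hence constant on the connected set $U$. For $y\in U$ the fiber $(\pi^\circ)^{-1}(y)$ is $\mathrm{Spec}$ of the $k$-algebra $(\pi^\circ_*\mathcal{O}_{V^\circ})\otimes_{\mathcal{O}_U}k(y)$, which is étale over $k=k(y)$ and has $k$-dimension $\rho$, hence is isomorphic to $k^{\rho}$ since $k$ is algebraically closed; so $\pi_V^{-1}(y)=(\pi^\circ)^{-1}(y)$ has exactly $\rho$ points, independently of the regular value $y$. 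The routine steps (openness of $R$, properness of $\pi_V$, connectedness of $U$) are easy; the crux is the passage from ``$d(\pi^\circ)$ is an isomorphism at every point'' to ``$\pi^\circ$ is finite étale'', where the flatness input (miracle flatness) does the real work and replaces the complex-analytic implicit function theorem used in~\cite{burgisser2005counting}.
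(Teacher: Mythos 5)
Your proof is correct, but it takes a genuinely different route from the one in the paper. You prove that over the (open, connected) locus $U$ of regular values the map $\pi_V$ is finite \'etale --- properness plus quasi-finiteness gives finiteness, the isomorphism of tangent spaces gives unramifiedness, and miracle flatness gives flatness --- and then read off the constant fiber cardinality from the rank of the locally free sheaf $\pi^\circ_*\mathcal{O}_{V^\circ}$. This is precisely the scheme-theoretic argument that the paper explicitly acknowledges (``a quasi finite and proper map is finite'', ``the fibers of a finite and \'etale map at closed points have constant cardinality'') and then deliberately declines to use in favour of ``a very direct proof''. The paper instead fixes two regular values $y,y'$, joins them by an affine line $L$, takes the union $C$ of the components of $\pi_V^{-1}(L)$ dominating $L$, passes to the ring of generalized power series of Rayner (an algebraically closed extension of $k(U)$ valid in any characteristic, replacing Puiseux series), and uses Newton iteration at the regular points of the fiber to set up a bijection between each fiber and the $D$ geometric points of the generic fiber over the line --- a hands-on one-dimensional argument that avoids invoking Zariski's main theorem, miracle flatness, and the structure theory of finite \'etale algebras. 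Your version is shorter and more conceptual at the price of importing that standard machinery; the paper's version is longer but self-contained and makes the characteristic-independence completely explicit. One small point to make airtight in your write-up: the properness of $\pi_V$ (hence the closedness of $\pi_V(V\setminus R)$ and the openness of $U$) requires $V$ to be closed in $\pp^n\times k^N$, which is the intended reading of ``algebraic set'' here and is satisfied by $\Phi_\Gamma$ in the paper's application.
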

\begin{proof}
  Without loss of generality, we can suppose that the set $W$ of
  regular values of $\pi_V$ is not empty (otherwise there is nothing
  to do); if we were for instance under the assumptions of the
  previous lemma, this would automatically be the case.  Let then $y$
  and $y'$ be two distinct points in $W$. The Jacobian criterion
  implies that the fibers $\varphi= \pi_V^{-1}(y)$ and $\varphi' =
  \pi_V^{-1}(y')$ are finite; we call $d$ and $d'$ their respective
  cardinalities, so that our goal is to prove that $d=d'$.

  Let $L\subset k^N$ be the affine line joining $y$ to $y'$. We let
  $U$ be a parameter for this line, such that $y$ corresponds to
  $U=0$. Consider the preimage $\pi_V^{-1}(L)$; this may not be a
  curve, so we let $C$ be the union of the irreducible components of
  this set whose image by $\pi_V$ is dominant, or equivalently equal
  to the whole line $L$ (since the projection $\pi_V$ is closed).  In
  particular, every irreducible component of $C$ meets the fiber
  $\varphi$ --- we will use this below.

  Let $I \subset k[U,X_0,\dots,X_n]$ be the defining ideal of $C$, and
  define further the extended ideal $I' = I \cdot
  k(U)[X_0,\dots,X_n]$; by construction, this ideal has dimension zero
  (since almost all fibers of $\pi_V$ above $L$ consists only of
  regular points, hence have dimension zero).


  To find the roots of $I'$, let us introduce the ring $\mathbb{L}$ of
  all ``generalized power series'' $F=\sum_{i \in \Gamma} f_i {U}^i$,
  where the index set $\Gamma \subset \mathbb{Q}$ (that depends on
  $F$) is well-ordered and all $f_i$'s are in $k$.  It turns out that
  this ring is a domain and contains an algebraic closure of
  $k(U)$, see Rayner~\cite{Rayner68}; this extends the construction of Puiseux
  series to arbitrary characteristic. Thus, we can write the zero-set
  $S$ of $I'$ as $S=(\gamma_{i,0} : \cdots : \gamma_{i,n})_{1 \le i
    \le D}$, with all $\gamma_{i,j}$ in $\mathbb{L}$. We will now
  prove that $d=D$, and we claim that this is enough to
  conclude. Indeed, we can repeat the construction above $y'$ and
  prove that the cardinality $d'$ of the fiber $\pi_V^{-1}(y')$ is
  equal to $D$ as well.

  We can assume without loss of generality that the fiber $\varphi$
  does not meet the hyperplane $X_0=0$; we write $\tilde \varphi$ for
  the affine set obtained from $\varphi$ through the dehomogenization
  $X_0=1$. Thus, it is enough to prove that $\tilde \varphi$ has
  cardinality $D$.

  Because the exponent support is well-ordered, we can define the {\em
    valuation} of a (non-zero) $F \in \mathbb{L}$ as above as the
  rational $\nu(F)=\min(i \in I,\ f_i \ne 0)$; this extends the
  $u$-adic valuation on $k(U)$. We can then rescale all elements in
  $S$ to ensure that all their entries $(\gamma_{i,0}, \dots,
  \gamma_{i,n})$ have non-negative valuations, with at least one of
  them having valuation zero. This allows us to define the {\em limit}
  $S'$ of these elements of $\pp^n(\mathbb{L})$ as the points in
  $\pp^n=\pp^n(k)$ obtained by taking the coefficient of $U^0$
  entry-wise; by construction, all these points lie in the fiber
  $\varphi$ (we call this process taking the {\em limit} of these
  points). Since all points in $\varphi$ are in the affine set $\{X_0
  \ne 0 \}$, we deduce that the elements in $S'$ can be written as
  $(\gamma_{i,0},\gamma_{i,1},\dots,\gamma_{i,n})_{1 \le i \le D}$,
  with $\gamma_{i,0}$ of valuation $0$ for all $i$.
  
  Let $Y_1,\dots,Y_N$ be coordinates in $k^N$; without loss of
  generality we can assume that $Y_1=U$.  Let then $P_1,\dots,P_s$ be
  polynomials in $k[U,Y_2,\dots,Y_N,X_1,\dots,X_n]$ defining the
  affine variety $\tilde V=V \cap \{X_0 \ne 0\}$; taking generic
  linear combinations of them, we obtain polynomials $Q_1,\dots,Q_n$
  such that $\tilde V=V(Q_1,\dots,Q_n)$ holds in an affine
  neighbourhood $\Omega$ of $\tilde \varphi$ and such that the tangent
  space $T_x \tilde V$ is the nullspace of the Jacobian of
  $Q_1,\dots,Q_n$ for all $x$ in $\tilde V \cap \Omega$. The
  assumption that $y$ is a regular value of $\pi_V$ then means that
  the truncated Jacobian of $Q_1,\dots,Q_n$ with respect to
  $X_1,\dots,X_n$ has full rank at every point of $\tilde \varphi$.

  Restricting $Q_1,\dots,Q_n$ to polynomials $q_1,\dots,q_n$ in
  $k[U,X_1,\dots,X_n]$ by setting $Y_2=\cdots=Y_N=0$, we obtain
  polynomials that define $\pi^{-1}(L)$ within $\Omega$; their
  Jacobian matrix with respect to $X_1,\dots,X_n$ still has full rank
  on $\tilde \varphi$. On the other hand, since we saw that all
  irreducible components of $C$ meet the fiber $\varphi$, we deduce
  the the Zariski closure of $V(q_1,\dots,q_n) \cap \Omega$ is the
  affine curve $C \cap \{X_0 \ne 0\}$. This further implies that the
  zero-set of the ideal $J=\langle q_1,\dots, q_n \rangle \cdot
  k(U)[X_1,\dots,X_n]$ is $\tilde
  S=(\gamma_{i,1}/\gamma_{i,0},\dots,\gamma_{i,n}/\gamma_{i,0})_{1 \le
    i \le D} \subset {\rm frac}(\mathbb{L})^n$. Since all $\gamma_{i,0}$'s 
  have valuation $0$, we can define the limit of such elements as we did 
  for vectors over $\mathbb{L}$.

  Newton iteration applied to the polynomials $q_1,\dots,q_n$ shows
  that for any point $g$ in $\tilde \varphi$, there exists a unique
  vector $\gamma$ in $\tilde S$ and whose limit is $g$; that vector
  $\gamma$ has entries that are actually in $k[[U]] \subset
  \mathbb{L}$ (uniqueness in Newton iteration is usually written for
  power series only, but extends readily in our context). Conversely,
  we saw that the limit of any element in $\tilde S$ is in $\varphi$,
  so that $\varphi$ is in one-to-one correspondence with $\tilde S$,
  as claimed.
\end{proof}


\subsection{An explicit construction of projective degrees}

Let $F_0, \dots, F_m$ be homogeneous of degree $d$ in $k[X_0, \dots,
  X_n]$, not all zero. Following B{\"u}rgisser, Cucker, and
Lotz~\cite{burgisser2005counting}, define
the rational map $\varphi:\pp^n \dashrightarrow \pp^m$ by $x \mapsto
(F_0(x): \cdots : F_m(x))$; it is defined over
$\pp^n-V(F_0,\dots,F_m)$.

Recall the definition of the projective degrees $(g_0,\dots,g_{\min(m,n)})$ of
the map $\varphi$ given in~\eqref{eq:projective_degrees_def}: $g_i$ is
obtained through the construction of a set of the form $\lambda \cap
\varphi^{-1}(\hat \lambda)$, where $\lambda$ and $\hat \lambda$ are
generic linear subspaces of respectively $\pp^n$ and $\pp^m$ of
respective codimensions $n-i$ and $i$, for some $i$ in
$\{0,\dots,\min(m,n)\}$.  The main purpose in this appendix is to make
the genericity requirements on $\lambda$ and $\hat \lambda$ entirely
explicit.

In all that follows, an integer $i$ in $\{0,\dots,\min(m,n)\}$ is
fixed. Although it would be natural to work all along with linear
spaces $\lambda$ and $\hat \lambda$ taken in suitable Grassmannians,
we will instead work in affine spaces, in order to be able to state
degree bounds. Thus, in what follows, we will want to identify a
vector $\ell = (\ell_{1,0}, \dots, \ell_{n-i,n})\in k^{(n-i) (n+1)}$
with $n-i$ linear forms over $\pp^n$ (so that we can view $\lambda$ as
$V(\ell(X))$), and similarly a vector $\hat \ell = (\hat{\ell}_{1,0},
\dots, \hat{\ell}_{i,m}) \in k^{i(m+1)}$ will be identified with $i$
linear forms over $\pp^m$. The corresponding hyperplanes may not
intersect properly, but the restrictions on $\ell$ and $\hat \ell$ we
will make below will ensure it. As per our convention, let $L$ denote
indeterminates $L_{1,0},\dots,L_{n-i,n}$, and similarly for $\hat L$.

Starting from
$F_0,\dots,F_m$ as above, up to reordering the $F_i$'s, we can assume
without loss of generality that $F_0$ is not identically zero. We then
define the following objects:
\begin{itemize}
\item $\Gamma_0 = \{(x,y) \in \pp^n \times \pp^m \ \mid \ x_0 F_0(x)
  \ne 0, y = \varphi(x) \}$,
\item $\Gamma \subset \pp^n \times \pp^m$ is the Zariski closure of $\Gamma_0$,
\item $\Gamma_\Sigma =\Gamma \cap (V(X_0 F_0) \times \pp^m )\subset \Gamma$.
\end{itemize}
We will naturally call $\Gamma$ the {\em graph} of $\varphi$. Remark
that although we could have taken the larger set $\{(x,y) \in \pp^n
\times \pp^m \ \mid \ x \notin V(F_0,\dots,F_m), y = \varphi(x) \}$ as
$\Gamma_0$ (as is done in~\cite{burgisser2005counting}), its Zariski
closure would have coincided with $\Gamma$ as defined above.  
\begin{lemma}\label{lemma:gamma}
  The sets $\Gamma_0$ and $\Gamma_{\Sigma}$ are disjoint,
  $\Gamma=\Gamma_0 \cup \Gamma_{\Sigma}$, $\Gamma$ is irreducible of
  dimension $n$ and $\dim(\Gamma_{\Sigma}) < n$. \label{propn:GammaDisjointUnion}
\end{lemma}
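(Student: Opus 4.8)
The plan is to analyze $\Gamma$ by examining the graph morphism on its natural domain of definition and then taking closures. First I would observe that the map $x \mapsto (x, \varphi(x))$ gives an isomorphism from the open set $U_0 = \{x \in \pp^n \mid x_0 F_0(x) \ne 0\}$ onto $\Gamma_0$, with inverse the first projection. Since $U_0$ is a nonempty open subset of the irreducible variety $\pp^n$, it is irreducible of dimension $n$, hence $\Gamma_0$ is irreducible of dimension $n$, and therefore its closure $\Gamma$ is irreducible of dimension $n$ as well. This immediately handles the irreducibility and dimension claim for $\Gamma$.

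Next I would prove $\Gamma = \Gamma_0 \cup \Gamma_\Sigma$ with the two pieces disjoint. Disjointness is essentially definitional: a point of $\Gamma_0$ satisfies $x_0 F_0(x) \ne 0$, while every point of $\Gamma_\Sigma$ lies in $V(X_0 F_0) \times \pp^m$, so the two conditions are mutually exclusive. For the decomposition, note $\Gamma_0 \subseteq \Gamma$ by definition and $\Gamma_\Sigma \subseteq \Gamma$ since $\Gamma_\Sigma$ is cut out of $\Gamma$ by an extra closed condition. Conversely, a point $(x,y) \in \Gamma$ either has $x_0 F_0(x) \ne 0$, in which case I must argue $(x,y) \in \Gamma_0$, i.e.\ that $y = \varphi(x)$; this follows because the locus $\{(x,y) : x_0F_0(x) \ne 0,\ y \ne \varphi(x)\}$ is open in $\pp^n \times \pp^m$ and disjoint from the dense subset $\Gamma_0$ of $\Gamma$, hence disjoint from $\Gamma$. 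Otherwise $x_0 F_0(x) = 0$, and then $(x,y) \in V(X_0F_0)\times\pp^m$, so $(x,y) \in \Gamma_\Sigma$. Thus $\Gamma \subseteq \Gamma_0 \cup \Gamma_\Sigma$, and combined with the reverse inclusions we get equality.

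Finally, for the dimension bound $\dim(\Gamma_\Sigma) < n$: since $\Gamma$ is irreducible of dimension $n$ and $\Gamma_\Sigma$ is a closed subset of $\Gamma$, it suffices to show $\Gamma_\Sigma \ne \Gamma$. But this is clear because $\Gamma_0 = \Gamma \setminus \Gamma_\Sigma$ is nonempty (indeed dense in $\Gamma$), so $\Gamma_\Sigma$ is a proper closed subset of the irreducible $n$-dimensional variety $\Gamma$, hence has dimension strictly less than $n$.

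The main obstacle, such as it is, lies in the middle step --- carefully justifying that a point $(x,y) \in \Gamma$ with $x_0 F_0(x) \ne 0$ actually satisfies $y = \varphi(x)$ rather than merely lying in the closure of the graph over a smaller set. The clean way to see this, as indicated above, is to use that $\Gamma_0$ is dense in $\Gamma$ and that $\{y = \varphi(x)\}$ is a closed condition on the open set where $\varphi$ is a morphism (equivalently, the graph of a morphism restricted to an open set is closed in that open set times the target); everything else is routine bookkeeping with closures and the fact that a proper closed subset of an irreducible variety has strictly smaller dimension.
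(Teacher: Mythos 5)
Your proof is correct and follows essentially the same route as the paper: the paper declares the disjointness and the decomposition ``straightforward,'' cites Harris (Chapter 7) for the fact that $\Gamma$ is irreducible and birationally equivalent to $\pp^n$, and concludes $\dim(\Gamma_\Sigma)<n$ because $\Gamma_\Sigma$ is a proper closed subset of the irreducible $n$-dimensional $\Gamma$. You simply fill in those details yourself -- identifying $\Gamma_0$ with the open set $\{x_0F_0(x)\ne 0\}$ via the graph morphism and using closedness of the graph of a morphism to verify the decomposition -- which is a valid, self-contained version of the same argument.
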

\begin{proof}
  The first two items are straightforward.  From Chapter 7
  of Harris \cite{harris1992algebraic} (see page 77-78 specifically), we get
  that $\Gamma$ is irreducible and birationally equivalent to
  $\pp^n$. Since $\Gamma_\Sigma$ is a proper algebraic subset of it,
  it has dimension less than $n$.
\end{proof}

We will define projective degrees $(g_0,\dots, g_{\min(m,n)})$
formally below. For the moment, we point out that they can then be
understood as integers such that the class of $\Gamma$ in $A^*(\pp^n
\times \pp^m) \cong \ZZ[h_1,h_2]/(h_1^{n+1},h_2^{m+1})$ is given by
$$[\Gamma]=g_0h_2^m+g_1h_1h_2^{m-1}+ \cdots + g_nh_1^nh_2^{m-n}, $$
with $g_i=0$ if $m-i <0$. In other words, $g_i$ will be the number 
of solutions of the system 
$$(x,y) \in \Gamma, \ \ell(x)=0,\ \hat \ell(y)=0,$$ for a generic
choice of $\ell$ and $\hat \ell$ in respectively $ k^{(n-i) (n+1)}$
and $ k^{i (m+1)}$, see Harris~\cite[Section~19.4]{harris1992algebraic}.

The main result in this section is then the following proposition.

\begin{propn}
  Let $i$ be in $\{0,\dots,\min(m,n)\}$ and suppose that $k$ has
  characteristic either $0$ or greater than $d^n$. Then, there exists
  a non-zero polynomial $F \in k[L,\hat L]$ of degree at most
  $(m+n+1)2^n (d+1)^{m+1}$, and a non-negative integer $g_i$, such that
  if $F(\ell,\hat \ell)$ is non-zero, then the system of equations and
  inequations
  \begin{align*}
&\left ( \ell_{j,0} x_0 + \cdots + \ell_{j,n} x_m = 0 \right )_{1 \le j \le n-i},\\    
&    \left (\hat \ell_{j,0} F_0(x) + \cdots + \hat \ell_{j,m} F_m(x) = 0 \right )_{1 \le j \le i},\\
&    x \notin V(F_0,\dots,F_m)
  \end{align*}
    has exactly $g_i$ solutions, and all these solutions satisfy $x_0 \ne 0$.
   \label{propn:projectiveDegreeExplicit}
\end{propn}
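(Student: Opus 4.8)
The plan is to express the genericity condition on $(\ell,\hat\ell)$ as the non-vanishing of a single polynomial $F$, obtained from two auxiliary incidence varieties, and to produce the integer $g_i$ by applying the Sard-type lemma and Lemma~\ref{lemma:fiber} of Section~\ref{ssec:known} to the projection, onto the parameter space, of an incidence variety built from the graph $\Gamma$ of $\varphi$. Fix $i$ as in the statement and set $N=(n-i)(n+1)+i(m+1)$, the dimension of the space $k^N$ of pairs $(\ell,\hat\ell)$; since $(n-i)+i=n$, this is $N=(n-i)n+im+n$. I would first form the incidence variety $Z=\{((x,y),(\ell,\hat\ell))\in\Gamma_0\times k^N\mid \ell(x)=0,\ \hat\ell(y)=0\}$ and let $\bar Z\subseteq\Gamma\times k^N$ be its Zariski closure. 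Projecting $Z$ onto $\Gamma_0$, the fibre over a point $(x,y)$ (which has $x\ne 0$ and $y\ne 0$) is a linear subspace of $k^N$ of dimension $(n-i)n+im$, so since $\Gamma_0$ is irreducible of dimension $n$ by Lemma~\ref{lemma:gamma}, both $Z$ and $\bar Z$ are irreducible of dimension exactly $N$.

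Via the Segre embedding I would regard $\bar Z$ as an irreducible subvariety of $\pp^{(n+1)(m+1)-1}\times k^N$ of dimension $N$, and let $p:\bar Z\to k^N$ be the projection. If $p$ is not dominant the proposition holds with $g_i:=0$, since $\overline{p(\bar Z)}$ is then a proper closed (hence hypersurface-bounded) subset of $k^N$ outside which the system has no solution; so assume $p$ dominant. Then $[k(\bar Z):k(k^N)]$ is the degree of the generically finite morphism $p$, and a characteristic-free B\'ezout bound makes it at most $d^n$ (the generic fibre is cut out of $\pp^n$ by $i$ forms of degree $d$ and $n-i$ linear forms). This is the one place the hypothesis $\mathrm{char}(k)=0$ or $\mathrm{char}(k)>d^n$ intervenes, guaranteeing $\mathrm{char}(k)>[k(\bar Z):k(k^N)]$; the Sard-type lemma of Section~\ref{ssec:known} then yields a hypersurface $H_0\subset k^N$ containing all non-regular values of $p$, and Lemma~\ref{lemma:fiber} shows the fibre of $p$ over every point of $k^N\setminus H_0$ has exactly $g_i$ points.

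To handle the points that would escape from $\Gamma_0$, set $W_\Sigma=\{((x,y),(\ell,\hat\ell))\in\Gamma_\Sigma\times k^N\mid \ell(x)=0,\ \hat\ell(y)=0\}$; the same fibre computation together with $\dim\Gamma_\Sigma<n$ (Lemma~\ref{lemma:gamma}) gives $\dim W_\Sigma<N$, so $\overline{p(W_\Sigma)}$ lies in some hypersurface $H_1$. I would take $F$ to vanish on $H_0\cup H_1$ and then verify the conclusion directly. If $F(\ell,\hat\ell)\ne 0$ and $x$ solves the displayed system, then $x\notin V(F_0,\dots,F_m)$ makes $y=\varphi(x)$ well defined with $\hat\ell(y)=0$, and $(x,y)\in\Gamma=\overline{\Gamma_0}$ by continuity of $\varphi$ off $V(F_0,\dots,F_m)$; were $x_0F_0(x)=0$ we would get $(x,y)\in\Gamma_\Sigma$, hence $((x,y),(\ell,\hat\ell))\in W_\Sigma$ and $(\ell,\hat\ell)\in H_1$, a contradiction --- so $x_0F_0(x)\ne 0$, whence $x_0\ne 0$ and $((x,y),(\ell,\hat\ell))\in Z$. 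Conversely each point of $Z$ above $(\ell,\hat\ell)$ arises from such a solution, and since $\bar Z\setminus Z\subseteq W_\Sigma$ the entire fibre of $\bar Z$ above $(\ell,\hat\ell)$ lies in $Z$; as $(\ell,\hat\ell)\notin H_0$, this fibre has $g_i$ points. Hence the system has exactly $g_i$ solutions, all with $x_0\ne 0$.

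The hard part is the quantitative claim that $H_0\cup H_1$ can be placed inside a hypersurface of degree at most $(m+n+1)2^n(d+1)^{m+1}$. For $H_1$ I would describe $W_\Sigma$ explicitly in $\pp^n_x\times\pp^m_y\times\pp^N_{(\ell,\hat\ell)}$ as cut out by the $2\times 2$ minors of the matrix with rows $(y_0,\dots,y_m)$ and $(F_0(x),\dots,F_m(x))$ (of bidegree $(d,1)$ in $(x,y)$ and defining $\Gamma$), by $X_0F_0(x)$ (cutting $\Gamma$ down to $\Gamma_\Sigma$), and by the linear forms $\ell_j(x)$ and $\sum_k\hat\ell_{j,k}y_k$; eliminating the projective coordinates $x,y$ and estimating the degree of the image by multihomogeneous B\'ezout should bring out the three factors --- $2^n$ from the truncation $h^{n+1}=0$ in $A^*(\pp^n)$, $(d+1)^{m+1}$ from the total degree of the graph $\Gamma$ (whose bidegrees are the $g_j\le d^j$), and $m+n+1$ from the number of eliminated equations. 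For $H_0$ I would instead realize the non-regular locus of $p$ as the image in $k^N$ of the subvariety of $\bar Z$ on which the Jacobian in the $x,y$ variables of a chosen system of $m+n+1$ defining equations of $\bar Z$ drops rank --- a determinantal locus, hence of comparable degree --- with the Sard-type lemma invoked only to ensure this image is proper. Getting the constant to be exactly $(m+n+1)2^n(d+1)^{m+1}$, rather than merely some explicit polynomial in $m,n,d$, is where I expect the delicate bookkeeping, and the precise role of the threshold $d^n$ on the characteristic, to be concentrated.
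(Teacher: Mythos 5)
Your qualitative skeleton is exactly the paper's (which both follow B\"urgisser--Cucker--Lotz): the incidence variety over $\Gamma_0$ and its closure are the paper's $\Phi_\Gamma$, your $W_\Sigma$ is the paper's $\Phi_\Sigma$, the non-dominant case with $g_i=0$, the bijection between fiber points and solutions of the system (via $x\mapsto(x,\varphi(x))$ and the observation that avoiding $\overline{\pi_2(W_\Sigma)}$ forces $x_0F_0(x)y_0\ne 0$), the bound $[k(\bar Z):k(L,\hat L)]\le d^n$ by B\'ezout, and the combination of the Sard-type lemma with Lemma~\ref{lemma:fiber} are all carried out the same way in the paper, and that part of your argument is sound.

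The genuine gap is the degree bound, which you explicitly defer, and which is the entire quantitative content of the proposition (the appendix exists precisely to make the genericity condition explicit). Your sketched route would need substantial repair: the factor $2^n$ does not come from ``the truncation $h^{n+1}=0$'' but from applying Heintz's \emph{affine} B\'ezout inequality to the $n$ incidence equations $L(\tilde X)=0$, $\hat L(\tilde Y)=0$, each of degree $2$ in the ambient affine space $k^n\times k^m\times k^N$ after a dehomogenization; the factor $(d+1)^{m+1}$ comes from the $m$ bilinear graph equations $G_j=Y_jF_0-Y_0F_j$ (degree $d+1$ each, giving $\deg\tilde\Gamma\le(d+1)^m$) times one more factor $(d+1)$ for cutting with $X_0F_0$; and the factor $m+n+1$ is not ``the number of eliminated equations'' but the sum $1+(m+n)$, where $1$ accounts for a polynomial of degree $2^n(d+1)^{m+1}$ vanishing on $\Psi_\Sigma$ and $m+n$ accounts for the degree $md+n\le(m+n)(d+1)$ of the determinant of the $(m+n)\times(m+n)$ Jacobian-type matrix $J^\star$ whose vanishing locus on $\hat\Phi_\Gamma$ captures the critical points of $\pi_2$ (Lemma~\ref{lemma:MainLemma} and Lemma~\ref{lemma:deltaLemma}); the final $F$ is the product $F_1F_2$ of the two polynomials cutting out these loci, so the bound is a \emph{sum} of two degrees, not the output of a single elimination. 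Your determinantal idea for $H_0$ is the right one, but without identifying the explicit equations $G_j$ (needed both to define $J^\star$ and to bound $\deg\hat\Phi_\Gamma$) and without the characterization of regular values in terms of $J^\star$, the stated constant $(m+n+1)2^n(d+1)^{m+1}$ is not established.
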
 

The proof of this proposition occupies the rest of this section.  We
first prove a basic result about incidence varieties involving two
families of linear forms; compare with B{\"u}rgisser, Cucker, and Lotz
\cite[Lemma~A.3.i]{burgisser2005counting}. In the discussion below, we
consider algebraic subsets of the ambient space $\pp^n\times \pp^m
\times k^{(n-i) (n+1)} \times k^{i (m+1)}$; these algebraic sets being
quasi-projective, it makes sense to talk about their irreducibility
properties, or their decomposition into irreducibles.

\begin{lemma}\label{lemma:lambda}
  Let $\Lambda$ be an irreducible algebraic subset of $\pp^n\times
  \pp^m$ and define $\Phi \subset \Lambda \times k^{(n-i) 
    (n+1)} \times k^{i (m+1)}$ as 
  $$ \Phi= \left\lbrace (x, y, \ell, \hat{\ell} )\; \mid
  \;  (x, y) \in \Lambda,\ \ell \in k^{(n-i) 
      (n+1)},\ \hat{\ell} \in k^{i  (m+1)},\ \ell(x)=0,\
    \hat{\ell}(y)=0  \right\rbrace.
  $$ Then $\Phi$ is irreducible of dimension $(n-i)(n+1) +i(m+1)+
  \dim(\Lambda) - n$.
\end{lemma}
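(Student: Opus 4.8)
The plan is to analyze $\Phi$ through the projection $p\colon \Phi \to \Lambda$ that forgets $\ell$ and $\hat\ell$, i.e.\ $(x,y,\ell,\hat\ell)\mapsto (x,y)$. This is a morphism of quasi-projective varieties ($\Phi$ is closed in $\Lambda \times k^{(n-i)(n+1)} \times k^{i(m+1)}$, and the latter is quasi-projective since $\Lambda$ is), and it is surjective because the zero section $(x,y)\mapsto (x,y,0,0)$ takes values in $\Phi$. Everything will then follow from a precise description of the fibers of $p$ together with a standard theorem on dimensions of fibers.

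First I would compute the fibers. Fix $(x,y)\in\Lambda$ and choose homogeneous representatives, so that the coordinate vectors $x=(x_0,\dots,x_n)$ and $y=(y_0,\dots,y_m)$ are nonzero. Writing $\ell$ as an $(n-i)$-tuple of row vectors $\ell_1,\dots,\ell_{n-i}\in k^{n+1}$, the condition $\ell(x)=0$ is the system $\ell_j\cdot x = 0$ for $1\le j\le n-i$; each of these is a \emph{nonzero} linear form on a separate block $k^{n+1}$ of the variable vector $\ell$ (nonzero precisely because $x\ne 0$), so together they cut out a linear subspace of $k^{(n-i)(n+1)}$ of codimension exactly $n-i$. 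The same argument with $\hat\ell$ and $y$ gives codimension exactly $i$ in $k^{i(m+1)}$. Hence $p^{-1}(x,y)$ is a linear subspace of $k^{(n-i)(n+1)}\times k^{i(m+1)}$ of dimension
$$\bigl((n-i)(n+1)-(n-i)\bigr)+\bigl(i(m+1)-i\bigr) = (n-i)n+im,$$
independent of $(x,y)$. In particular every fiber of $p$ is irreducible (it is an affine space) and of the same dimension $\delta:=(n-i)n+im$.

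I would then invoke the standard fact that a surjective morphism of quasi-projective varieties with irreducible target all of whose fibers are irreducible of a fixed dimension $\delta$ has irreducible source of dimension $\dim(\Lambda)+\delta$ (see e.g.\ \cite[Lecture~11]{harris1992algebraic}; the statement there is phrased for projective varieties but the argument is identical in the quasi-projective case). Applied to $p$ with $\delta=(n-i)n+im$, this yields that $\Phi$ is irreducible of dimension
$$\dim(\Lambda)+(n-i)n+im = (n-i)(n+1)+i(m+1)+\dim(\Lambda)-n,$$
which is exactly the claimed value. Alternatively — and perhaps most cleanly — one can upgrade the fiber computation to the assertion that $p$ is a Zariski-locally-trivial vector bundle of rank $\delta$: over an open subset of $\Lambda$ on which one has an algebraic choice of nonzero representatives $x$ and $y$, each map $\ell_j\mapsto\ell_j\cdot x$ (resp.\ $\hat\ell_j\mapsto\hat\ell_j\cdot y$) is a surjective map of trivial bundles whose kernel is a subbundle, and rescaling the representatives does not change these kernels, so they patch globally; irreducibility and the dimension formula for $\Phi$ then follow at once from the corresponding facts about vector bundles over an irreducible base.

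I do not foresee a genuine obstacle. The one point that really uses the hypotheses is the \emph{constancy} of the fiber dimension, which is what forces $\Phi$ to be irreducible rather than merely equidimensional; this relies essentially on the fact that a representative of a point of $\pp^n$ or $\pp^m$ is nonzero, so that each individual linear condition is a nontrivial constraint on its block of coordinates. The only other mild care needed is to quote a version of the fiber-dimension/irreducibility theorem valid for quasi-projective varieties, which is routine, and the vector-bundle reformulation sidesteps even that.
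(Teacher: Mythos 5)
Your fiber computation is exactly right, and it is the same starting point as the paper's proof: project $\Phi\to\Lambda$, observe that every fiber is a linear space of the constant dimension $(n-i)n+im$, and check that this matches the claimed total dimension. The one genuine problem is your first route: the ``standard fact'' that a surjective morphism of quasi-projective varieties with irreducible target and irreducible equidimensional fibers has irreducible source is \emph{false} without a properness hypothesis, and the argument in Harris is not ``identical in the quasi-projective case'' --- it uses that the images of the irreducible components of the source are closed. A counterexample: in $k^2$ with coordinates $(t,u)$, let $X=\bigl(V(u)\cup\{(0,1)\}\bigr)\setminus\{(0,0)\}$, a locally closed (hence quasi-projective) set, mapped to $Y=k^1$ by $(t,u)\mapsto t$. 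Every fiber is a single point, yet $X$ has two irreducible components. So as literally quoted, that lemma does not close the argument, and your map $p$ is certainly not proper (its fibers are affine spaces).

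Fortunately, your ``alternative'' is not an alternative but the actual proof: the observation that $p\colon\Phi\to\Lambda$ is a sub-vector-bundle of the trivial bundle (the kernel of the fiberwise-surjective map $(\ell,\hat\ell)\mapsto(\ell(x),\hat\ell(y))$, well defined because the kernels are insensitive to rescaling the homogeneous representatives) is correct and immediately gives irreducibility and the dimension count, since the total space of a vector bundle over an irreducible base is irreducible of dimension $\dim(\text{base})+\mathrm{rank}$. The paper handles the same difficulty differently: it exploits the homogeneity of the equations in $L,\hat L$ to pass to the projectivization $\Phi'\subset\Lambda\times\pp^{(n-i)(n+1)+i(m+1)-1}$, where everything is projective, applies Theorem~11.14 of Harris together with the fiber-dimension theorem there, and then recovers $\Phi$ as the affine cone. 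Both devices buy the same thing --- a setting in which the irreducibility-from-fibers principle is legitimate --- so your write-up is fine provided you lead with the vector-bundle formulation (or projectivize as the paper does) rather than with the unqualified quasi-projective citation.
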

\begin{proof}
  Let $(\mathfrak{h}_1,\dots,\mathfrak{h}_c)$ be polynomials defining
  the ideal of $\Lambda$; this ideal is prime by assumption. Then, the
  ideal $I=(\mathfrak{h}_1,\dots, \mathfrak{h}_c, L(X), \hat L(Y) ) $
  defines $\Phi$.  Since all equations are homogeneous in the
  variables $L,\hat L$, we can consider the algebraic set $\Phi'$
  they define in $\Lambda \times \pp^{(n-i)(n+1) +i(m+1) -1}$; we will
  prove that $\Phi'$ is irreducible of dimension $(n-i)(n+1) +i(m+1)+
  \dim(\Lambda) - (n-1)$, which is enough to conclude.

  In this context, we can follow verbatim the proof
  of B{\"u}rgisser, Cucker, and
Lotz~\cite[Lemma~A.3.i]{burgisser2005counting}. Let $\pi_1: \Phi' \to
  \Lambda $ be the projection $(x, y, \ell , \hat{\ell} ) \mapsto
  (x,y)$, and let $(x,y)$ be any point in $\Lambda$. The fiber
  $\pi_1^{-1}(x,y)$ is defined by $n$ linear equations that intersect
  properly, so it is irreducible of dimension $(n-i)(n+1) +i(m+1) +
  (n-1)$; since $\Phi'$ and $\Lambda$ are projective, Theorem~11.14
  in Harris~\cite{harris1992algebraic} shows that $\Phi'$ is irreducible, and
  the theorem on the dimension of fibers shows that it has dimension
  $(n-i)(n+1) +i(m+1) + \dim(\Lambda)+ (n-1)$.
\end{proof}

Let us first construct the incidence variety $\Phi_\Gamma$ associated
to the graph $\Gamma$ by this process; it is irreducible of dimension
$(n-i)(n+1) +i(m+1)$. We will denote by $\Psi_\Gamma$ the image
$\pi_2(\Phi_\Gamma)$, where $\pi_2: \pp^n\times \pp^m \times k^{(n-i)
  (n+1)} \times k^{i (m+1)} \to k^{(n-i) (n+1)} \times k^{i (m+1)} $
is the projection on the right-hand factor $(x,y,\ell,\hat \ell)
\mapsto (\ell,\hat \ell)$. We expect $\Psi_\Gamma$ to have full
dimension, but this does not necessarily have to be the case.

We will also need results on another algebraic set obtained by means
of Lemma~\ref{lemma:lambda}, this time starting from
$\Gamma_\Sigma$. Consistent with the notation above let \small $$
\Phi_\Sigma= \left\lbrace (x, y, \ell, \hat{\ell} )\; \mid \; (x, y)
\in \Gamma_\Sigma,\ \ell \in k^{(n-i) (n+1)},\ \hat{\ell} \in k^{i
  (m+1)},\ \ell(x)=0,\ \hat{\ell}(y)=0 \right\rbrace.$$ \normalsize
The set $\Gamma_\Sigma = V(X_0 F_0)$ is not irreducible; writing its
irreducible components as $W_1,\dots,W_s$, we deduce that
$\Phi_\Sigma$ is the union of all $\Phi_j$, where for $j=1,\dots,s$,
$\Phi_j$ is built using the same process, starting from $W_j$.  Then,
Lemma~\ref{lemma:lambda} shows that each $\Phi_j$ is irreducible
dimension $(n-i)(n+1) +i(m+1)- 1$; the dimension claim also holds for
$\Phi_\Sigma$ itself.

Finally, let $\Psi_{\Sigma}= \pi_2 \left(\Phi_{\Sigma}
\right)$. Because this projection is closed, $\Psi_\Sigma$ is an
algebraic set, and the dimension bound on $\Phi_\Sigma$ implies that
$\Psi_\Sigma$ has codimension at least one. Our first criterion for
choosing $(\ell,\hat \ell)$ will be that they avoid $\Psi_\Sigma$ (as
in~\cite{burgisser2005counting}, the other constraint will amount to
saying that they are regular values of the restriction of $\pi_2$ to
$\Phi_\Gamma$). The following lemma shows a trivial consequence of
such an assumption; in what follows, we use the open set $\Omega =
\{(x,y) \in \pp^n \times \pp^m \ \mid \ x_0 F_0(x)y_0 \ne 0\}$.

\begin{lemma}\label{lemma:avoidSigma}
  Let $(\ell,\hat \ell)$ be in $k^{(n-i) (n+1)} \times k^{i (m+1)} -
  \Psi_\Sigma$. Then, for any point $(x,y,\ell,\hat \ell)$ in
  $\Phi_\Gamma$ lying over $(\ell,\hat \ell)$, $(x,y)$ is in $\Gamma_0
  \cap \Omega$.
\end{lemma}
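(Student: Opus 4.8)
The plan is to read the statement off the definitions, using the decomposition $\Gamma=\Gamma_0\cup\Gamma_\Sigma$ established in Lemma~\ref{lemma:gamma}. The key observation will be that $\Phi_\Gamma$ and $\Phi_\Sigma$ impose the \emph{same} linear conditions $\ell(x)=0$, $\hat\ell(y)=0$ on $x$ and $y$, so a point of $\Phi_\Gamma$ whose $\pp^n\times\pp^m$-coordinate happens to lie in $\Gamma_\Sigma$ is literally a point of $\Phi_\Sigma$ over the same $(\ell,\hat\ell)$.

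Concretely, I would fix $(\ell,\hat\ell)\in k^{(n-i)(n+1)}\times k^{i(m+1)}-\Psi_\Sigma$ and an arbitrary $(x,y,\ell,\hat\ell)\in\Phi_\Gamma$ lying over it. By definition of $\Phi_\Gamma$ this says $(x,y)\in\Gamma$, $\ell(x)=0$, $\hat\ell(y)=0$. By Lemma~\ref{lemma:gamma}, $(x,y)$ lies in $\Gamma_0$ or in $\Gamma_\Sigma$. If it were in $\Gamma_\Sigma$, then the same relations $\ell(x)=0$, $\hat\ell(y)=0$ would exhibit $(x,y,\ell,\hat\ell)$ as a point of $\Phi_\Sigma$, hence $(\ell,\hat\ell)=\pi_2(x,y,\ell,\hat\ell)\in\Psi_\Sigma$, contradicting the choice of $(\ell,\hat\ell)$. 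Therefore $(x,y)\in\Gamma_0$.

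It then remains to promote $(x,y)\in\Gamma_0$ to $(x,y)\in\Gamma_0\cap\Omega$, i.e.\ to verify $x_0F_0(x)y_0\neq 0$. The inequality $x_0F_0(x)\neq 0$ is part of the very definition of $\Gamma_0$, so only $y_0\neq 0$ needs a word: membership in $\Gamma_0$ forces $y=\varphi(x)=(F_0(x):\cdots:F_m(x))$, and since $F_0(x)\neq 0$ the first homogeneous coordinate of $y$ is nonzero, i.e.\ $y_0\neq 0$. This finishes the argument.

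I do not expect any real obstacle here; the statement is purely formal, a consequence of the definitions of $\Gamma_0$, $\Gamma_\Sigma$, $\Phi_\Gamma$, $\Phi_\Sigma$, $\Psi_\Sigma$ and the disjoint-union decomposition of Lemma~\ref{lemma:gamma}. The only point deserving a moment's attention is the identification of a point of $\Phi_\Gamma$ over $(\ell,\hat\ell)$ having $(x,y)\in\Gamma_\Sigma$ with a point of $\Phi_\Sigma$ over the same $(\ell,\hat\ell)$, which is what makes the avoidance hypothesis $(\ell,\hat\ell)\notin\Psi_\Sigma$ bite.
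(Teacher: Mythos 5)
Your proof is correct and follows essentially the same route as the paper: exclude $\Gamma_\Sigma$ via the hypothesis $(\ell,\hat\ell)\notin\Psi_\Sigma$, invoke the decomposition of Lemma~\ref{lemma:gamma} to place $(x,y)$ in $\Gamma_0$, and then read off $x_0F_0(x)\ne 0$ and $y=\varphi(x)$ to get $y_0\ne 0$. No gaps.
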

\begin{proof}
  Let $(\ell,\hat \ell)$ be as above, and let $(x,y,\ell,\hat \ell)$
  be a point in $\Phi_\Gamma$ lying over $(\ell,\hat \ell)$. Because
  $(\ell,\hat \ell)$ is not in the projection of $\Phi_\Sigma$,
  $(x,y)$ is not in $\Gamma_\Sigma$, so by Lemma~\ref{lemma:gamma}, it
  is in $\Gamma_0$; thus we have that $x_0$, $F_0(x)$ are non-zero and that $y =
  \varphi(x)$. This implies that $y_0$ is non-zero as well, so $(x,y)$
  is in the open set~$\Omega$.   
\end{proof}

\begin{corr}\label{coro:oneone}
  Let $(\ell,\hat \ell)$ be in $k^{(n-i) (n+1)} \times k^{i (m+1)} -
  \Psi_\Sigma$. Let $S \subset \pp^n \times \pp^m$ be the fiber
  $\pi_2^{-1}(\ell,\hat \ell) \cap \Phi_\Gamma$, and let $S' \subset \pp^n$
  be the set of solutions of 
\begin{align*}
   &\left ( \ell_{j,0} x_0 + \cdots + \ell_{j,n} x_m = 0 \right )_{1 \le j \le n-i},\\    
  &    \left (\hat \ell_{j,0} F_0(x) + \cdots + \hat \ell_{j,m} F_m(x) = 0 \right )_{1 \le j \le i},\\
  &    x \notin V(F_0,\dots,F_m).
\end{align*}
 Then, all points $(x,y)$ in $S$ satisfy $x_0 F_0(x) y_0 \ne 0$ and
 $(x,y)\mapsto x$ gives a bijection $S \to S'$ with inverse
 $x \mapsto (x,\varphi(x))$.
\end{corr}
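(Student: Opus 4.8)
The plan is to read off both assertions directly from Lemma~\ref{lemma:gamma}, Lemma~\ref{lemma:avoidSigma}, and the remark made just after the definition of $\Gamma$ --- namely that enlarging $\Gamma_0$ to the full graph of $\varphi$ over $\pp^n-V(F_0,\dots,F_m)$ does not change its Zariski closure $\Gamma$. No new geometry is needed; the work is simply to match the fiber $S=\pi_2^{-1}(\ell,\hat\ell)\cap\Phi_\Gamma$ with the solution set $S'$.

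First I would dispose of the inequality. Any $(x,y)$ in $S$ is the $\pp^n\times\pp^m$-component of a point of $\Phi_\Gamma$ lying over $(\ell,\hat\ell)$, and since $(\ell,\hat\ell)\notin\Psi_\Sigma$, Lemma~\ref{lemma:avoidSigma} puts $(x,y)$ in $\Gamma_0\cap\Omega$; because $\Omega=\{(x,y)\mid x_0F_0(x)y_0\ne 0\}$ this is exactly the claim, and it also records the fact that $y=\varphi(x)$ for every $(x,y)\in S$, which I will use below.

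Next I would introduce the maps $\alpha\colon S\to\pp^n$, $(x,y)\mapsto x$, and $\beta\colon S'\to\pp^n\times\pp^m$, $x\mapsto(x,\varphi(x))$ --- the latter is well defined since $x\notin V(F_0,\dots,F_m)$ holds on $S'$ --- and check the two inclusions. For $\alpha(S)\subseteq S'$: given $(x,y)\in S$, the previous step gives $(x,y)\in\Gamma_0$, so $F_0(x)\ne 0$ (hence $x\notin V(F_0,\dots,F_m)$) and $y=(F_0(x):\cdots:F_m(x))$; the incidence conditions $\ell(x)=0$ and $\hat\ell(y)=0$ that define $\Phi_\Gamma$ become, respectively, the linear system in $x$ and the system $\hat\ell_{j,0}F_0(x)+\cdots+\hat\ell_{j,m}F_m(x)=0$, so $x\in S'$. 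For $\beta(S')\subseteq S$: given $x\in S'$, the point $(x,\varphi(x))$ lies in the graph of $\varphi$ over $\pp^n-V(F_0,\dots,F_m)$, hence in $\Gamma$ by the remark cited above; together with $\ell(x)=0$ and $\hat\ell(\varphi(x))=0$, which are the defining equations of $S'$, this gives $(x,\varphi(x),\ell,\hat\ell)\in\Phi_\Gamma$, i.e.\ $(x,\varphi(x))\in S$.

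Finally I would check that $\alpha$ and $\beta$ are mutually inverse: $\alpha\circ\beta=\mathrm{id}_{S'}$ is immediate, and $\beta\circ\alpha=\mathrm{id}_S$ because $y=\varphi(x)$ for $(x,y)\in S$, as established in the first step; this identifies $\alpha$ as the asserted bijection with inverse $\beta$. The only step that really requires care is $\beta(S')\subseteq S$: one must know that $\Gamma$ contains the graph of $\varphi$ over its \emph{whole} domain, not merely over $\{x_0F_0(x)\ne 0\}$, so that a solution of the $S'$-system genuinely produces a point of $\Gamma$ (and hence of $\Phi_\Gamma$) --- which is precisely the content of that remark.
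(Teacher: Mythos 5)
Your proposal is correct and follows essentially the same route as the paper's proof: both use Lemma~\ref{lemma:avoidSigma} to place points of $S$ in $\Gamma_0\cap\Omega$ (giving the nonvanishing of $x_0F_0(x)y_0$ and $y=\varphi(x)$), and both use the observation that $\Gamma$ is also the closure of the graph of $\varphi$ over all of $\pp^n-V(F_0,\dots,F_m)$ to send $S'$ into $S$. Your explicit verification of both compositions is a slightly fuller bookkeeping of the same argument.
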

\begin{proof}
  Let $S$ be the fiber $\pi_2^{-1}(\ell,\hat \ell) \cap \Phi_\Gamma$,
  and let $S' \subset \pp^n$ be the points defined by the constraints
  above. Start first from a point $x$ in $S'$. Because $x$ is not in
  $V(F_0,\dots,F_m)$, we can define $y = \varphi(x)$, so that $(x,y)$
  is in $\Gamma$ (since we pointed out that $\Gamma$ can also be
  defined as the Zariski closure of the restriction of $\phi$ to
  $\pp^n - V(F_0,\dots,F_m)$). Then, $(x,y,\ell,\hat \ell)$ is in
  $\Phi_\Gamma$ and thus in $S$.  This gives an injection $a: S' \to
  S$.

  Conversely, start from a point $(x,y)$ in $S$. The previous lemma
  shows that $(x,y)$ is in $\Gamma_0$, so that $y =\varphi(x)$, which
  implies that $x$ satisfies all constraints defining $S'$. This gives
  a mapping $b:S \to S'$, such that $a \circ b: S \to S$ is the
  identity; as a result, $S$ and $S'$ are in one-to-one
  correspondence. We saw that all points $(x,y)$ in $S$ satisfy $x_0 F_0(x) y_0
  \ne 0$, so we are done.
\end{proof}

In order to bound the degree of the algebraic sets $\Phi_\Gamma$ and
$\Phi_\Sigma$, we need to give simple equations defining $\Gamma$ and
its tangent space at a point $(x,y)$. The definition of $\Gamma_0$
allows us to achieve this in the open set $\Omega$.
For this, we define the polynomials $G_i = Y_i F_0 - Y_0 F_i
\in k[X,Y]$, for $i=1,\dots,m$, together with their dehomogenization
$\tilde{G}_i = G_i(1,X_1,\dots,X_n,1,Y_1,\dots,Y_m)$. Similarly, given
$(x,y)$ in $\Omega$, we write $(\tilde x,\tilde
y)=(x_1/x_0,\dots,x_n/x_0,y_1/y_0,\dots,y_m/y_0)$ for the
corresponding point in $k^n\times k^m$.
\begin{lemma}\label{lemma:open}
  In the open set $\Omega$, $\Gamma$ coincides with
  $V(G_1,\dots,G_m)$; for any $(x,y)$ in $\Gamma \cap \Omega$, the
  tangent space $T_{(x,y)} \Gamma$ is the nullspace of the Jacobian
  matrix ${\rm Jac}(\tilde{G}_1,\dots,\tilde{G}_m)$ at $(\tilde x,\tilde y)$ and such
  a point is non-singular on $\Gamma$.
\end{lemma}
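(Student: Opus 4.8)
The plan is to treat the set-theoretic identity and the infinitesimal statement separately, working throughout in the affine chart $X_0=Y_0=1$ of $\pp^n\times\pp^m$, in which $\Omega$ becomes the principal open subset $\{\tilde f_0\neq 0\}$ of $k^n\times k^m$, where $\tilde f_j=F_j(1,X_1,\dots,X_n)$ and $\tilde G_i=Y_i\tilde f_0-\tilde f_i$.

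First I would prove $\Gamma\cap\Omega=V(G_1,\dots,G_m)\cap\Omega$. For one inclusion, note that every point $(x,y)$ of $\Gamma_0$ satisfies $y=\varphi(x)$, so $(y_0:\cdots:y_m)=(F_0(x):\cdots:F_m(x))$ and hence $y_iF_0(x)-y_0F_i(x)=0$, i.e. $G_i(x,y)=0$, for all $i$; since each $G_i$ is bihomogeneous, $V(G_1,\dots,G_m)$ is closed in $\pp^n\times\pp^m$, so $\Gamma=\overline{\Gamma_0}\subseteq V(G_1,\dots,G_m)$. For the reverse inclusion, take $(x,y)\in V(G_1,\dots,G_m)\cap\Omega$: from $y_iF_0(x)=y_0F_i(x)$ together with $F_0(x)\neq 0$ and $y_0\neq 0$ we get $(y_0:\cdots:y_m)=(F_0(x):\cdots:F_m(x))=\varphi(x)$, and since also $x_0F_0(x)\neq 0$ this means $(x,y)\in\Gamma_0\subseteq\Gamma$. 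Combining the two inclusions gives $\Gamma\cap\Omega=V(G_1,\dots,G_m)\cap\Omega$, which in the chart above reads $\Gamma\cap\Omega=V(\tilde G_1,\dots,\tilde G_m)\cap\Omega$. (Note $\Gamma_0\subseteq\Omega$, so $\Gamma\cap\Omega$ is a dense open subset of $\Gamma$, and the case $\Gamma\cap\Omega=\varnothing$ does not arise.)

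For the tangent-space claim I would compute the Jacobian of $(\tilde G_1,\dots,\tilde G_m)$. Differentiating with respect to $Y_1,\dots,Y_m$ gives $\partial\tilde G_i/\partial Y_j=\tilde f_0\,\delta_{ij}$, i.e. the $m\times m$ block $\tilde f_0\cdot I_m$, which is invertible on $\Omega$; hence ${\rm Jac}(\tilde G_1,\dots,\tilde G_m)$ has rank exactly $m$ at every point of $\Gamma\cap\Omega$, and its nullspace there has dimension $(n+m)-m=n$. Now fix $(x,y)\in\Gamma\cap\Omega$ with affine coordinates $(\tilde x,\tilde y)$. Each $\tilde G_i$ vanishes identically on the open subset $\Gamma\cap\Omega$ of $\Gamma$, so $T_{(x,y)}\Gamma\subseteq\ker{\rm Jac}(\tilde G_1,\dots,\tilde G_m)(\tilde x,\tilde y)$, a space of dimension $n$. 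By Lemma~\ref{lemma:gamma}, $\Gamma$ is irreducible of dimension $n$, so $\dim T_{(x,y)}\Gamma\geq n$; therefore equality holds everywhere, $(x,y)$ is a non-singular point of $\Gamma$, and $T_{(x,y)}\Gamma=\ker{\rm Jac}(\tilde G_1,\dots,\tilde G_m)(\tilde x,\tilde y)$, as claimed.

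I do not expect a serious obstacle: the whole point is the elementary observation that the $Y$-block of the Jacobian is $\tilde f_0$ times the identity, with $\tilde f_0$ a unit on $\Omega$. The only mild care needed is the bookkeeping with dehomogenization, and the use of the general inequality $\dim T_p\Gamma\geq\dim_p\Gamma$ (rather than any claim that $(\tilde G_1,\dots,\tilde G_m)$ generates a radical ideal) to upgrade the containment of tangent spaces into an equality.
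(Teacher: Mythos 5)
Your proof is correct and follows essentially the same route as the paper: the same direct verification that $G_i(x,y)=0$ characterizes $\Gamma$ on $\Omega$, and the same observation that the $Y$-block of ${\rm Jac}(\tilde G_1,\dots,\tilde G_m)$ is $\tilde f_0\cdot I_m$, hence invertible on $\Omega$. The only difference is that you spell out the concluding step (rank $m$ plus $\dim\Gamma=n$ plus $\dim T_p\Gamma\ge\dim_p\Gamma$ forces non-singularity and $T_{(x,y)}\Gamma=\ker{\rm Jac}$), which the paper leaves implicit after establishing full rank.
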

\begin{proof}
  Take $(x,y)$ in $\Gamma \cap \Omega$. Then, since $x_0 F_0(x)$ is
  non-zero, Lemma~\ref{lemma:gamma} shows that $y = \varphi(x)$, that
  is, $(y_0 : \cdots : y_m) = (F_0(x) : \cdots : F_m(x))$; thus, all
  $G_i$'s vanish at $(x,y)$.  Conversely, starting from $(x,y)$ in
  $V(G_1,\dots,G_m) \cap \Omega$, we see that the equalities
  $G_i(x,y)=0$ imply that $y_i = F_i(x)/F_0(x)$, as claimed.

  Consider such a point $(x,y)$, together with the corresponding
  affine point $(\tilde x, \tilde y)$. In the affine chart defined by
  $X_0=Y_0=1$, the Jacobian matrix of $\tilde G_1,\dots,\tilde G_m$
  with respect to $Y_1,\dots,Y_m$ at $(\tilde x,\tilde y)$ is a
  diagonal matrix, with $F_0(1,\tilde x)$ as diagonal. Since this value
  is non-zero, we deduce that the Jacobian of
  $\tilde{G}_1,\dots,\tilde{G}_m$ (with respect to all variables) 
  has full rank $m$ at $(\tilde x, \tilde y)$.
\end{proof}

From this, we can obtain useful degree bounds.

\begin{lemma}\label{lemma:PsiSigmaDegreeBound}
  The inequalities $\deg \left( \Psi_{\Gamma }\right) \leq
  2^{n}(d+1)^m$ and $\deg \left( \Psi_{\Sigma }\right) \leq
  2^{n}(d+1)^{m+1}$ hold.
\end{lemma}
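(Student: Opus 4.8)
The plan is to realize $\Phi_\Gamma$ and $\Phi_\Sigma$ as subvarieties of a product of three projective spaces cut out by few equations of controlled multidegree, bound their multidegrees by a multihomogeneous B\'ezout inequality, and then push these bounds down to the images $\Psi_\Gamma=\pi_2(\Phi_\Gamma)$ and $\Psi_\Sigma=\pi_2(\Phi_\Sigma)$.

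First I would set $N=(n-i)(n+1)+i(m+1)$ and work in $P=\pp^n\times\pp^m\times\pp^{N-1}$, with homogeneous coordinates $X$, $Y$, $(L,\hat L)$ on the three factors and Chow ring $A^*(P)\cong\ZZ[h_1,h_2,h_3]/(h_1^{n+1},h_2^{m+1},h_3^N)$. Passing to $\pp^{N-1}$ for the parameter directions is legitimate because $\Psi_\Gamma$ and $\Psi_\Sigma$ are invariant under rescaling the blocks of $(\ell,\hat\ell)$, hence are cones in $k^N$, and the degree of such a cone equals the degree of the associated variety in $\pp^{N-1}$. Next I would record the inclusion $\Gamma\subseteq V(G_1,\dots,G_m)$ in $\pp^n\times\pp^m$: since $\Gamma=\overline{\Gamma_0}$, $\Gamma_0\subseteq V(G_1,\dots,G_m)$, and the latter is closed, this is immediate (it is also contained in Lemma~\ref{lemma:open}), and each $G_i=Y_iF_0-Y_0F_i$ has class $dh_1+h_2$. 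As $\Phi_\Gamma$ is obtained from $\Gamma$ by further imposing the $n-i$ bilinear equations $\sum_j L_{k,j}X_j=0$ (class $h_1+h_3$) and the $i$ bilinear equations $\sum_j\hat L_{k,j}Y_j=0$ (class $h_2+h_3$), it lies in the common zero locus in $P$ of these $m+n$ forms, and a multihomogeneous B\'ezout inequality (in the style of Lemma~A.3 of~\cite{burgisser2005counting}) bounds it coefficientwise by
\[
[\Phi_\Gamma]\ \preceq\ (dh_1+h_2)^m(h_1+h_3)^{n-i}(h_2+h_3)^i .
\]
For $\Phi_\Sigma$ one has in addition $\Gamma_\Sigma\subseteq V(X_0F_0)$, a form of class $(d+1)h_1$, so $[\Phi_\Sigma]\preceq(d+1)h_1\cdot(dh_1+h_2)^m(h_1+h_3)^{n-i}(h_2+h_3)^i$.

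Then I would bound $\deg(\Psi_\Gamma)=\deg(\pi_2(\Phi_\Gamma))$, where $\pi_2\colon P\to\pp^{N-1}$ is projection to the parameter factor. When $\pi_2$ is generically finite on $\Phi_\Gamma$ its image is all of $\pp^{N-1}$ and $\deg(\Psi_\Gamma)=1$; otherwise I would first cut $\Phi_\Gamma$ by a generic linear section using only hyperplanes of classes $h_1$ and $h_2$ --- this leaves the closure of the image unchanged, because the general fibre of $\pi_2|_{\Phi_\Gamma}$ lies inside $\pp^n\times\pp^m$, and its effect on the class is multiplication by a monomial $h_1^ah_2^b$. For the resulting generically finite projection, $\deg(\Psi_\Gamma)$ is at most the coefficient of $h_1^nh_2^mh_3^{N-1-\dim\Psi_\Gamma}$ in the (sliced) class, hence at most the sum of all its coefficients; since every coefficient of the bounding product is nonnegative, and neither truncation in $A^*(P)$ nor multiplication by $h_1^ah_2^b$ increases this sum, we obtain $\deg(\Psi_\Gamma)\le (dh_1+h_2)^m(h_1+h_3)^{n-i}(h_2+h_3)^i\big|_{h_1=h_2=h_3=1}=(d+1)^m2^{n-i}2^i=2^n(d+1)^m$. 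The identical computation for $\Psi_\Sigma$ carries the extra factor $(d+1)$ and yields $2^n(d+1)^{m+1}$.

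The step I expect to be the main obstacle is this transfer to the images: one must argue carefully that a linear projection does not increase degree even when it fails to be generically finite (the slicing argument above), and one must justify the B\'ezout inequality in the multiprojective setting for a zero locus that need not be a complete intersection, so that the coefficientwise bounds on $[\Phi_\Gamma]$ and $[\Phi_\Sigma]$ are valid even though these varieties are only unions of some of the components of the indicated zero loci. Everything else reduces to routine bookkeeping with multidegrees.
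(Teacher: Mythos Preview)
Your route is workable but genuinely different from the paper's. The paper never stays in the multiprojective Chow ring: it fixes generic linear forms $\nu(X),\mu(Y)$, passes to the affine chart $\nu(X)=\mu(Y)=1$, and applies Heintz's affine B\'ezout inequality three times---first to the dehomogenized $\tilde G_1,\dots,\tilde G_m$ (each of total degree $d+1$) to get $\deg(\tilde\Gamma)\le(d+1)^m$, then once more with the dehomogenization of $X_0F_0$ for $\tilde\Gamma_\Sigma$, and finally with the $n$ bilinear incidence equations (total degree $2$ each) for $\tilde\Phi_\Sigma$. The projection step is then the elementary fact that degree cannot increase under an affine linear projection; a short closure argument, using that each irreducible $\Phi_j$ meets the chosen chart densely for generic $\nu,\mu$, identifies $\overline{\pi_2(\tilde\Phi_\Sigma)}$ with $\Psi_\Sigma$.

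The obstacle you single out is real, and it is exactly what the paper's choice of tools sidesteps. Your coefficientwise bound $[\Phi_\Gamma]\preceq(dh_1+h_2)^m(h_1+h_3)^{n-i}(h_2+h_3)^i$ cannot be read off a naive proper-intersection B\'ezout, because $V(G_1,\dots,G_m)$ already has excess components: for instance $V(F_0)\times V(Y_0)$ sits inside it with codimension $2$ in $\pp^n\times\pp^m$ regardless of $m$, so for $m>2$ the intersection is improper. To complete your argument you would need either a refined multiprojective B\'ezout that tolerates excess, or a direct bound on the multidegrees of $\Gamma$ from its description as the closure of a graph. Heintz's inequality is insensitive to properness, which is precisely why the paper's affine detour is short here. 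Your slicing-then-pushforward handling of the projection is sound in principle and lands on the same numbers.
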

\begin{proof}
  We give details of the proof for the case of $\Psi_\Sigma$, which is
  the (slightly) more involved; we comment below on the difference
  between the two cases. To begin with, remark that the previous
  lemma implies that $\Gamma \subset \pp^n \times \pp^m$ is one of the
  irreducible components of $V(G_1,\dots,G_m)$.

  Choose and fix a dehomogenization of $\pp^n \times \pp^m$, by
  setting $\nu(X)=\mu(Y)=1$, for some linear forms $\nu$ and $\mu$
  over respectively $\pp^n$ and $\pp^m$. These linear forms can be
  chosen arbitrarily, provided they satisfy constraints that will be
  made explicit below.

  Let us apply this dehomogenization to $\Gamma$ to obtain an affine
  set $\tilde \Gamma \subset k^n \times k^m$. The previous remark
  implies that $\tilde \Gamma$ is one of the irreducible components of
  $V(\tilde G_1,\dots,\tilde G_m)$, so by the affine B\'ezout
  inequality of Heintz~\cite{heintz1983definability}, it has degree at
  most $(d+1)^m$. We can apply the same dehomogenization to
  $\Gamma_\Sigma$, obtaining an affine set $\tilde \Gamma_\Sigma$,
  which coincides with $\tilde \Gamma \cap V(\tilde{F_0}) \subset k^n
  \times k^m$, where $\tilde{F_0}$ is the dehomogenized version of
  $X_0F_0$. Using the same B\'ezout inequality, we obtain $ \deg(\tilde
  \Gamma_\Sigma) \leq \deg(\tilde \Gamma) \deg(V(\tilde{F_0}) )\leq
  (d+1)^{m+1}.$

  We can similarly dehomogenize constructions in $\pp^n \times \pp^m
  \times k^{(n-i) (n+1)} \times k^{i (m+1)}$, in order to obtain
  objects in $k^n \times k^m \times k^{(n-i) (n+1)} \times k^{i
    (m+1)}$. We apply this process to $\Phi_{\Sigma}$ to obtain
  $\tilde \Phi_{\Sigma}$; this set can be rewritten as 
 $$\tilde \Phi_{\Sigma}= \tilde \Gamma_{\Sigma }\cap V(L(\tilde X))
  \cap V(\hat{L}(\tilde Y)) \subset k^n \times k^m \times k^{(n-i)
    \times (n+1)} \times k^{i \times (m+1)},$$ where $\tilde X,\tilde
  Y$ are obtained by dehomogenizing $X$ and $Y$ using the constraints
  $\nu(X)=1$ and $\mu(Y)=1$. Recall that we wrote $\Phi_\Sigma$ as the
  finite union of the irreducible sets $\Phi_1,\dots,\Phi_s$, with
  $\Phi_j = W_j \cap V(L(X)) \cap V(\hat L(Y))$; our assumption on the
  linear forms $\nu$ and $\mu$ is then that $\nu(X)\mu(Y)$ vanishes
  identically on none of $\Phi_1,\dots,\Phi_s$.

  Independently of this assumption, we always have that $\tilde
  \Phi_\Sigma$ is the union of the sets $\tilde \Phi_1,\dots,
  \tilde \Phi_s$ obtained by dehomogenizing $\Phi_1,\dots,\Phi_s$;
  our assumption implies that each $\tilde \Phi_j$ is open dense 
  in $\Phi_j$.

  In terms of degree, the equations $L(\tilde X)$ and $\hat L(\tilde
  Y)$ have degree $2$, and there are $n$ of them. Again applying the
  affine B\'ezout inequality of Heintz~\cite{heintz1983definability},
  we deduce that $$ \deg(\tilde \Phi_{\Sigma}) \leq \deg(\tilde
  \Gamma_{\Sigma}) \deg( V(L(\tilde X))) \deg(V(\hat{L}(\tilde
  Y)))\leq 2^n (d+1)^{m+1}.$$
  
  Finally, consider the projection on the $L,\hat L$-space $k^{(n-i)
    (n+1)} \times k^{i (m+1)}$; in a slight abuse of notation, we
  denote by $\pi_2$ both projections $\pp^n\times \pp^m \times
  k^{(n-i) (n+1)} \times k^{i (m+1)} \to k^{(n-i) (n+1)} \times k^{i
    (m+1)}$ and $k^n\times k^m \times k^{(n-i) (n+1)} \times k^{i
    (m+1)} \to k^{(n-i) (n+1)} \times k^{i (m+1)}$. In an affine
  setting, we know that the degree cannot increase through a
  projection, so that we have
  $$ \deg \left (\overline {\pi_2\left (\tilde \Phi_\Sigma \right )}\right ) \le  2^n (d+1)^{m+1}.$$
  Thus, to conclude, it is enough to prove that 
  $\pi_2(\Phi_\Sigma) = \overline {\pi_2\left (\tilde \Phi_\Sigma
    \right )}.$ By construction, the right-hand side is the union of
  the sets $\overline{\pi_2(\tilde
    \Phi_1)},\dots,\overline{\pi_2(\tilde \Phi_s)}$.  Now, by
  assumption, each $\tilde \Phi_j$ is an open dense subset of the
  corresponding irreducible set $\Phi_j$; this implies that
  $\overline{\pi_2(\tilde \Phi_j)}=\pi_2(\Phi_j)$, and the conclusion
  follows.

  In the case of $\Psi_\Gamma$, essentially the same construction
  applies, but we would not need to intersect with $V(\tilde F_0)$;
  this saves a factor $(d+1)$, for a total of $2^n (d+1)^m$.
\end{proof}
 
If the restriction of $\pi_2$ to $\Phi_\Gamma$ is not surjective, we are
now done.

\begin{proof}[Proof of
    Proposition~\ref{propn:projectiveDegreeExplicit}, non-surjective
    case.]  Take $(\ell,\hat \ell)$ not in $\Phi_\Gamma$. By
  Corollary~\ref{coro:oneone}, the system of equations and inequations
  in Proposition~\ref{propn:projectiveDegreeExplicit} has no solution.
  If the restriction of $\pi_2$ to $\Phi_\Gamma$ is not surjective,
  because $\pi_2$ is closed, $\Psi_\Gamma$ must have codimension at
  least $1$.  Thus, we take $g_i=0$ and let $F$ be a non-zero
  polynomial whose zero-set contains $\Psi_\Gamma$. The degree bound
  of the previous lemma allows us to conclude.
\end{proof}

In what follows, we can thus assume that $\Psi_\Gamma = k^{(n-i)
  (n+1)} \times k^{i (m+1)}$; in particular, this gives us an
extension $k(L,\hat L) \to k(\Phi_\Gamma)$ of finite degree $D$.

\begin{lemma}
  The degree $D=[k(\Phi_\Gamma) : k(L,\hat L)]$ is at most $d^n$.
\end{lemma}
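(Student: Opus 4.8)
The plan is to interpret $D$ as the degree of the generically finite dominant morphism $\pi_2\colon \Phi_\Gamma \to k^{(n-i)(n+1)} \times k^{i(m+1)}$ and to bound the length of its generic fibre by a B\'ezout estimate. The restriction of $\pi_2$ to $\Phi_\Gamma$ is dominant by the standing assumption $\Psi_\Gamma = k^{(n-i)(n+1)} \times k^{i(m+1)}$, and $\dim \Phi_\Gamma = (n-i)(n+1) + i(m+1)$ equals the dimension of the target, so $\pi_2$ is generically finite of degree $D = [k(\Phi_\Gamma):k(L,\hat L)]$. For a dominant generically finite morphism of integral schemes the coordinate ring of the generic fibre is exactly the function field of the source; hence the generic fibre of $\pi_2$ is a single reduced point whose $k(L,\hat L)$-length is $D$.

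Next I would describe this generic fibre explicitly. The projection to the $\pp^n$-factor identifies $\Gamma \cap \Omega$ with $\Gamma_0$, and $\Gamma_0$ is by definition the graph of the morphism $\varphi$ over the open set $\{x_0 F_0 \neq 0\}$ of $\pp^n$; consequently the bijection of Corollary~\ref{coro:oneone} is in fact a scheme isomorphism, and it holds over the whole complement of $\Psi_\Sigma$, a set of codimension at least one, which therefore contains the generic point of $k^{(n-i)(n+1)} \times k^{i(m+1)}$. Thus, over that generic point, the fibre $\pi_2^{-1}(\ell,\hat\ell) \cap \Phi_\Gamma$ is isomorphic to the zero-dimensional scheme $S'_\eta \subset \pp^n_{k(L,\hat L)}$ cut out, away from $V(F_0,\dots,F_m)$, by the $n-i$ linear forms $\ell(X)$ and the $i$ forms $\sum_{k=0}^m \hat\ell_{j,k} F_k(X)$ of degree $d$ — here $V(\hat\ell(Y))$ pulls back to $V(\sum_k \hat\ell_{j,k}F_k)$ precisely because $F_0$ is invertible on $\Omega$ (Lemma~\ref{lemma:open}). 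In particular $D$ equals the $k(L,\hat L)$-length of $S'_\eta$.

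Finally I would apply B\'ezout. Eliminating $n-i$ of the coordinates by the linear forms, $S'_\eta$ becomes a zero-dimensional scheme in an affine space $\mathbb{A}^{i}_{k(L,\hat L)}$ defined by $i$ polynomials of degree at most $d$; by the B\'ezout inequality the $k(L,\hat L)$-length of such a scheme is at most $1^{n-i}\cdot d^{i}=d^{i}$. Since $i \le \min(m,n) \le n$ and $d \ge 1$, this gives $D \le d^{i} \le d^{n}$, as claimed. The step that deserves care — and the main obstacle — is the passage from the field-degree $D$ to the length of the generic fibre together with the fact that this fibre is genuinely \emph{isomorphic}, not merely set-theoretically bijective, to the linear/degree-$d$ system $S'_\eta$: it is exactly the graph structure of $\varphi$ on $\Omega$ that furnishes this isomorphism, and it is what allows the B\'ezout count to be taken with multiplicities and to remain valid in arbitrary characteristic, with no appeal to generic smoothness.
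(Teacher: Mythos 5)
Your argument is correct and follows essentially the same route as the paper: both identify the generic fibre of $\pi_2$ restricted to $\Phi_\Gamma$ with the solution set $S'$ of the explicit system of $n-i$ linear forms and $i$ forms of degree $d$ over $k(L,\hat L)$ (via Corollary~\ref{coro:oneone} applied to the indeterminates $L,\hat L$), and then bound the field degree $D$ by a B\'ezout count of the isolated solutions of that system. Your count $d^{i}$ is in fact slightly sharper than the stated bound $d^{n}$, which the paper's own B\'ezout step also implicitly yields before relaxing it.
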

\begin{proof}
  Although it was proved for $(\ell,\hat\ell)$ with coefficients in $k$,
  Corollary~\ref{coro:oneone} also applies to indeterminates $(L,\hat L)$,
  since by construction they do not satisfy the equations defining 
  $\Psi_\Sigma$. Thus, the  
  the generic fiber $\pi_2^{-1}(L,\hat L) \cap \Phi_\Gamma$ is
in one-to-one correspondence with the set $S'$ of solutions of 
\begin{align*}
   &\left ( L_{j,0} x_0 + \cdots + L_{j,n} x_m = 0 \right )_{1 \le j \le n-i},\\    
  &    \left (\hat L_{j,0} F_0(x) + \cdots + \hat L_{j,m} F_m(x) = 0 \right )_{1 \le j \le i},\\
  &    x \notin V(F_0,\dots,F_m),
\end{align*}
  where the mapping from the latter to the former is $x \mapsto
  (x,\varphi(x))$.  Let $K$ be the function field of $S'$ over
  $k(L,\hat L)$; this then is a finite extension of $k(L,\hat L)$ of degree
  $D$ as well.
  
  Now, working over an algebraic closure of $k(L,\hat L)$, we see that
  this degree is bounded from above by the sum of the multiplicities
  of the isolated solutions of the system
\begin{align*}
   &\left ( L_{j,0} x_0 + \cdots + L_{j,n} x_m = 0 \right )_{1 \le j \le n-i},\\    
  &    \left (\hat L_{j,0} F_0(x) + \cdots + \hat L_{j,m} F_m(x) = 0 \right )_{1 \le j \le i};
\end{align*}
 by B\'ezout's theorem, this is at most $d^n$.
\end{proof}

To summarize, applying Lemma~\ref{lemma:fiber} to $\pi_2$, we have
that all fibers of its regular values have the same cardinality, which
we call $g_i$. On the other hand, the previous lemma, together with
our assumption on the characteristic of $k$, allow us to apply Sard's
Lemma; this implies that the regular values contain an open dense
subset of $k^{(n-i) (n+1)} \times k^{i (m+1)}$. It remains to give
sufficient conditions on a pair $(\ell,\hat \ell) \in k^{(n-i) (n+1)}
\times k^{i (m+1)}$ that will ensure that $(\ell,\hat \ell)$ is such a
regular value.

\begin{lemma}
  Let $(\ell,\hat \ell)$ be in $k^{(n-i) (n+1)} \times k^{i (m+1)} -
  \Psi_\Sigma$. Then, $(\ell,\hat \ell)$ is a regular value of the
  restriction of $\pi_2$ to $\Gamma$ if and only if for any $(x,y)$ in
  the fiber $\pi_2^{-1}(x,y,\ell,\hat \ell) \cap \Phi_\Gamma$, the
  $(m+n) \times (m+n)$ matrix
  $$J^\star= \begin{bmatrix} 
     {\rm Jac}(\tilde{G}_1,\dots,\tilde{G}_m) \\
      \begin{array}{cc}
       \ell^*~~  & ~~0
       \end{array}  \\
     \begin{array}{lr}
       0 ~~& ~~\hat{\ell}^*
       \end{array} 
  \end{bmatrix}$$
    is invertible at $(\tilde x, \tilde y, \ell, \hat \ell)$, where 
    $\ell^*$ and $\hat \ell^*$ denote the matrices
$$ \ell^* = \begin{bmatrix}
      \ell_{1,1} & \cdots & \ell_{1,n} \\
        \vdots & & \vdots \\
        \ell_{n-i,1} & \cdots & \ell_{n-i,n}
    \end{bmatrix} \quad\text{and}\quad
\hat \ell^* = \begin{bmatrix}
    \hat \ell_{1,1} & \cdots & \hat \ell_{1,m} \\
        \vdots & & \vdots \\
     \hat \ell_{i,1} & \cdots & \hat \ell_{i,m}
    \end{bmatrix}.$$
\label{lemma:MainLemma}
\end{lemma}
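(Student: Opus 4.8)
The plan is to reduce everything, once $(\ell,\hat\ell)$ lies outside $\Psi_\Sigma$, to an explicit computation in the affine chart $X_0=Y_0=1$. First I would invoke Lemma~\ref{lemma:avoidSigma} together with Corollary~\ref{coro:oneone}: for such an $(\ell,\hat\ell)$, every point $(x,y)$ of the fiber $\pi_2^{-1}(\ell,\hat\ell)\cap\Phi_\Gamma$ lies in $\Gamma_0\cap\Omega$, so $x_0 F_0(x) y_0\neq 0$. Hence it is legitimate to dehomogenize about any such point, passing to affine coordinates $\tilde x,\tilde y$, and to apply Lemma~\ref{lemma:open}, which says that in a neighbourhood of $(x,y)$ the graph $\tilde\Gamma$ equals $V(\tilde G_1,\dots,\tilde G_m)$, is non-singular there, and has tangent space equal to the nullspace of ${\rm Jac}(\tilde G_1,\dots,\tilde G_m)$; moreover, from the same lemma, the $\tilde Y$-block of this Jacobian is $F_0(1,\tilde x)\, I_m$, which has full rank $m$.

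Next I would give a local description of $\Phi_\Gamma$ itself. In the affine space with coordinates $(\tilde x,\tilde y, L,\hat L)$, near a fiber point $\Phi_\Gamma$ is cut out by the $m+n$ polynomials $\tilde G_1,\dots,\tilde G_m$, the $n-i$ affine-linear forms $L_{j,0}+L_{j,1}\tilde X_1+\cdots+L_{j,n}\tilde X_n$ (for $1\le j\le n-i$), and the $i$ affine-linear forms $\hat L_{j,0}+\hat L_{j,1}\tilde Y_1+\cdots+\hat L_{j,m}\tilde Y_m$ (for $1\le j\le i$); the equations $\hat\ell(y)=0$ become linear in $\tilde y$ precisely because $(x,y)\in\Gamma_0$ forces $y=\varphi(x)$ with $y_0\neq 0$. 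I would then write out the Jacobian of this system with respect to all of $\tilde X,\tilde Y,L,\hat L$: the $\tilde G_i$-rows carry no $L,\hat L$ derivatives, while each $L$-row (resp.\ $\hat L$-row) has a $1$ in the column corresponding to $L_{j,0}$ (resp.\ $\hat L_{j,0}$) and zeros in the other $L,\hat L$ columns. Combining these $1$'s with the full-rank $\tilde Y$-block of ${\rm Jac}(\tilde G_1,\dots,\tilde G_m)$ shows the whole Jacobian has full rank $m+n$; since $\Phi_\Gamma$ is irreducible of dimension $N=(n-i)(n+1)+i(m+1)$ (Lemma~\ref{lemma:lambda}) and is locally contained in the smooth $N$-dimensional set these equations define, the two coincide near the point. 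Hence $\Phi_\Gamma$ is non-singular at the fiber point and $T_{(x,y,\ell,\hat\ell)}\Phi_\Gamma$ is the nullspace of that Jacobian.

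Finally I would identify the regular-value condition. Since $\dim\Phi_\Gamma=N=\dim\bigl(k^{(n-i)(n+1)}\times k^{i(m+1)}\bigr)$ and each fiber point is non-singular on $\Phi_\Gamma$ by the previous step, the point is a regular point of $\pi_2|_{\Phi_\Gamma}$ exactly when the linear map $d\pi_2$ restricts to an isomorphism of tangent spaces, i.e.\ when it has trivial kernel on $T_{(x,y,\ell,\hat\ell)}\Phi_\Gamma$. A tangent vector in that kernel is a vector $(\dot{\tilde x},\dot{\tilde y},0,0)$ annihilated by the big Jacobian; setting the $L,\hat L$ components to zero, the surviving conditions are exactly ${\rm Jac}_{\tilde X,\tilde Y}(\tilde G_1,\dots,\tilde G_m)(\dot{\tilde x},\dot{\tilde y})^{t}=0$, $\ell^*\dot{\tilde x}=0$ and $\hat\ell^*\dot{\tilde y}=0$, that is, $J^\star(\dot{\tilde x},\dot{\tilde y})^{t}=0$. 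As $J^\star$ is a square $(m+n)\times(m+n)$ matrix, this kernel is trivial if and only if $J^\star$ is invertible at the corresponding affine point; running this over every point of the fiber yields the claimed equivalence. The first and last steps are routine bookkeeping; the part that needs the most care — and which I expect to be the main obstacle — is the middle one, namely checking that the $m+n$ listed equations really cut out $\Phi_\Gamma$ locally around a fiber point (so that their Jacobian genuinely computes the tangent space) and organising the block structure of that Jacobian so that the reduction of $\ker(d\pi_2)$ to $\ker J^\star$ is transparent; both hinge on the full-rank statement for the $\tilde Y$-derivatives of the $\tilde G_i$ supplied by Lemma~\ref{lemma:open}.
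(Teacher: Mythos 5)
Your proposal is correct and follows essentially the same route as the paper: use Lemma~\ref{lemma:avoidSigma}/Corollary~\ref{coro:oneone} to place fiber points in $\Omega$, describe $\Phi_\Gamma$ there by $\tilde G_1,\dots,\tilde G_m$ together with the dehomogenized linear forms, observe that the blocks $D_L,D_{\hat L}$ of derivatives with respect to $L_{j,0},\hat L_{j,0}$ give the full Jacobian rank $m+n$ (hence smoothness of $\Phi_\Gamma$ at the point), and reduce the regular-value condition to the invertibility of the square submatrix $J^\star$. Your extra dimension-count to justify that these equations cut out $\Phi_\Gamma$ locally is harmless but unnecessary, since in $\Omega$ this holds by the definition of $\Phi_\Gamma$ combined with Lemma~\ref{lemma:open}.
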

\begin{proof}
  Lemma~\ref{lemma:open} implies that in the open set $\Omega \times
  k^{(n-i) (n+1)} \times k^{i (m+1)}$, $\Phi_\Gamma$ is defined by the
  equations $G_1=\dots=G_m=0$, together with $L(X) =\hat L(Y) =
  0$. Let us work in the affine chart defined by $X_0=Y_0=1$; in this
  chart, the equations for $\Phi_\Gamma$ become $\tilde{G}_1=\dots=\tilde{G}_m=0$,
  together with $L(1,\tilde X)=\hat L(1,\tilde Y)$, with $\tilde X =
  (X_1,\dots,X_n)$ and $\tilde Y = (Y_1,\dots,Y_m)$.  Consider the
  Jacobian matrix of these polynomials with respect to variables
  $\tilde X, \tilde Y, L, \hat L$ (in this order); this matrix 
  takes the form
    $$J= \begin{bmatrix} 
     {\rm Jac}(\tilde{G}_1,\dots,\tilde{G}_m) & 0 & 0 \\
       \begin{array}{cc}
       L^\star~~ & ~~0
       \end{array}  & D_L & 0 \\
       \begin{array}{cc}
       0 ~~& ~~\hat{L}^\star
       \end{array} & 0 & D_{\hat L}
  \end{bmatrix},$$
  where 
  \begin{itemize}
  \item $L^\star$ and $\hat L^\star$ are defined similarly to
    $\ell^\star$ and $\hat \ell^\star$, but with indeterminate entries,
  \item $D_L$ and $D_{\hat L}$ are full-rank matrices (since they possess a
  maximal identity submatrix, coming from the derivatives with respect
  to variables $L_{j,0}$ and $\hat L_{j,0}$).
  \end{itemize}
 Since we saw in Lemma~\ref{lemma:open}
  that ${\rm Jac}(\tilde{G}_1,\dots,\tilde{G}_m)$ has full rank $m$ at
  $(\tilde x,\tilde y)$, $J$ has full rank $m + n$ at $(\tilde
  x,\tilde y,\ell,\hat \ell)$.  As a consequence, $(x,y,\ell,\hat
  \ell)$ is non-singular on $\Phi_\Gamma$, and the nullspace of
  $J(x,y,\ell,\hat \ell)$ defines the tangent space $T_{(x,y,\ell,\hat
    \ell)} \Phi_\Gamma$. From this, one easily deduces that
  $\pi_2(T_{(x,y,\ell,\hat \ell)} \Phi_\Gamma)$ has dimension less
  than $(n-i) (n+1)+i (m+1)$ if and only if the
  submatrix 
  \begin{equation}\label{eq:Jstar}
J^\star= \begin{bmatrix} {\rm
      Jac}(\tilde{G}_1,\dots,\tilde{G}_m) \\
      \begin{array}{cc}
       L^\star~~ & ~~0
       \end{array}  \\
     \begin{array}{cc}
       0~~ & ~~\hat{L}^\star
       \end{array}
  \end{bmatrix}    
  \end{equation}
  has rank less than $m+n$.
\end{proof}

Let us denote by $\hat \Phi_\Gamma \subset k^n \times k^m \times
k^{(n-i) (n+1)} \times k^{i (m+1)}$ the dehomogenization of
$\Phi_\Gamma$ obtained by setting $X_0=Y_0=1$ (this should not be
confused with the dehomogenizations defined in the proof of
Lemma~\ref{lemma:PsiSigmaDegreeBound}, that were obtained using random
linear forms). Remark that $\hat \Phi_\Gamma$ is not empty: the first
item in the previous lemma shows that above a generic choice of
$(\ell,\hat \ell)$, there exist points in $\Phi_\Gamma \cap \Omega$,
so that $X_0 Y_0$ does not vanish identically on $\Phi_\Gamma$.  Thus,
$\hat \Phi_\Gamma$ is an irreducible affine algebraic set; the degree
inequality established in the proof of
Lemma~\ref{lemma:PsiSigmaDegreeBound} for generic dehomogenizations
still holds, giving $\deg(\hat \Phi_\Gamma) \le 2^n (d+1)^m$.

Let $D$ be the determinant of the matrix $J^\star$ defined
in~\eqref{eq:Jstar}, and define $\Delta = \hat \Phi_\Gamma \cap V(D)
\subset k^n \times k^m \times k^{(n-i) (n+1)} \times k^{i (m+1)}$.
Finally, we let $\Delta'\subset k^n \times k^m \times k^{(n-i) (n+1)}
\times k^{i (m+1)}$ be the Zariski closure of $\pi_2(\Delta)$ (where
we see $\pi_2$ as the projection $k^n \times k^m \times k^{(n-i)
  (n+1)} \times k^{i (m+1)}\to k^{(n-i) (n+1)} \times k^{i (m+1)}$).

\begin{lemma}The following holds:
  \begin{itemize}
  \item Any $(\ell,\hat \ell)$ in $k^{(n-i) (n+1)} \times k^{i (m+1)}
    - \Psi_\Sigma - \Delta'$ is a regular value of the restriction of
    $\pi_2$ to $\Gamma$.
  \item The algebraic set $\Delta'$ has codimension at least 1, and degree
    at most $(n+m) 2^n(d+1)^{m+1}$.
  \end{itemize} \label{lemma:deltaLemma}
\end{lemma}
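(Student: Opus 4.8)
The plan is to deduce both assertions from Lemma~\ref{lemma:MainLemma} together with the degree bounds already in place. For the first assertion, I would fix a pair $(\ell,\hat\ell)$ outside $\Psi_\Sigma\cup\Delta'$. Since $(\ell,\hat\ell)\notin\Psi_\Sigma$, Corollary~\ref{coro:oneone} shows that every point $(x,y)$ of the fiber $\pi_2^{-1}(\ell,\hat\ell)\cap\Phi_\Gamma$ lies in $\Gamma_0\cap\Omega$, so its dehomogenization $(\tilde x,\tilde y)$ is defined and the point $(\tilde x,\tilde y,\ell,\hat\ell)$ lies in $\hat\Phi_\Gamma$. Because $(\ell,\hat\ell)$ is not in $\Delta'\supseteq\pi_2(\Delta)$ and $\Delta=\hat\Phi_\Gamma\cap V(D)$, the determinant $D$ cannot vanish at $(\tilde x,\tilde y,\ell,\hat\ell)$, i.e.\ the matrix $J^\star$ is invertible there; as this holds at every point of the fiber, Lemma~\ref{lemma:MainLemma} gives that $(\ell,\hat\ell)$ is a regular value of the restriction of $\pi_2$ to $\Phi_\Gamma$.

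For the codimension claim I would first argue that $D$ does not vanish identically on the irreducible set $\hat\Phi_\Gamma$. The discussion preceding Lemma~\ref{lemma:MainLemma} — Sard's Lemma applied via the bound $[k(\Phi_\Gamma):k(L,\hat L)]\le d^n$ and the hypothesis on the characteristic of $k$ — shows that the regular values of $\pi_2|_{\Phi_\Gamma}$ contain a dense open set; intersecting this with the complement of the proper closed set $\Psi_\Sigma$ and with the dense open locus over which the fiber is nonempty (in the case at hand $\pi_2|_{\Phi_\Gamma}$ is surjective, so in fact every fiber is nonempty), I can choose a regular value $(\ell,\hat\ell)\notin\Psi_\Sigma$ whose fiber contains a point $(x,y)$. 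Corollary~\ref{coro:oneone} and Lemma~\ref{lemma:MainLemma} then force $J^\star$ to be invertible at the corresponding point $(\tilde x,\tilde y,\ell,\hat\ell)\in\hat\Phi_\Gamma$, so $D\ne 0$ there. Hence $\Delta=\hat\Phi_\Gamma\cap V(D)$ is a proper closed subset of $\hat\Phi_\Gamma$, so $\dim\Delta\le\dim\hat\Phi_\Gamma-1=(n-i)(n+1)+i(m+1)-1$; since $\pi_2$ cannot raise dimension, $\Delta'$ has dimension at most $(n-i)(n+1)+i(m+1)-1$, strictly less than the dimension $(n-i)(n+1)+i(m+1)$ of the ambient space $k^{(n-i)(n+1)}\times k^{i(m+1)}$, i.e.\ $\Delta'$ has codimension at least $1$.

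For the degree bound I would estimate the total degree of $D=\det(J^\star)$ from the block form~\eqref{eq:Jstar}: every term of the determinant expansion is a product of one entry from each of the $m$ rows of ${\rm Jac}(\tilde G_1,\dots,\tilde G_m)$, whose entries have degree at most $d$ in $(\tilde X,\tilde Y)$, times one entry from each of the $n$ rows built from $L^\star$ and $\hat L^\star$, whose entries are single indeterminates; hence $\deg D\le md+n\le (n+m)(d+1)$. Combining $\deg\hat\Phi_\Gamma\le 2^n(d+1)^m$ (recorded just before the lemma) with the affine B\'ezout inequality of Heintz~\cite{heintz1983definability} gives $\deg\Delta\le 2^n(d+1)^m\cdot(n+m)(d+1)=(n+m)2^n(d+1)^{m+1}$, and since degree does not increase under the affine projection $\pi_2$ we conclude $\deg\Delta'\le (n+m)2^n(d+1)^{m+1}$.

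The step I expect to be the main obstacle is the non-vanishing of $D$ on $\hat\Phi_\Gamma$: one must pass carefully from the abstract existence of regular values supplied by Sard's Lemma to the concrete Jacobian criterion of Lemma~\ref{lemma:MainLemma}, in particular checking that the generic fiber of $\pi_2|_{\Phi_\Gamma}$ is nonempty and that the point it provides survives in the dehomogenization $\hat\Phi_\Gamma$ (taken with $X_0=Y_0=1$, not with the random linear forms used in Lemma~\ref{lemma:PsiSigmaDegreeBound}). Everything else is routine once the B\'ezout machinery developed above is in hand.
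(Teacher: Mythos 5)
Your proposal is correct and follows essentially the same route as the paper: the first bullet via Corollary~\ref{coro:oneone} and Lemma~\ref{lemma:MainLemma}, and the degree bound via $\deg D\le md+n\le(m+n)(d+1)$ together with Heintz's B\'ezout inequality and the fact that projections do not increase degree. The only (harmless) variation is in the codimension claim, where you show $D$ is not identically zero on $\hat\Phi_\Gamma$ so that $\Delta$ is a proper closed subset, whereas the paper instead notes that $\pi_2(\Delta)$ lies in the union of $\Psi_\Sigma$ with the critical values and applies Sard's Lemma directly; both arguments are valid.
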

\begin{proof}
  For $(\ell,\hat \ell)$ in $k^{(n-i) (n+1)} \times k^{i (m+1)} -
  \Psi_\Sigma$, we claim that $(\ell,\hat \ell)$ is a critical value
  of the restriction of $\pi_2$ to $\Phi_\Gamma$ if and only if
  $(\ell,\hat\ell)$ belongs to $\pi_2(\Delta)$.

  Indeed, take $(\ell,\hat \ell)$ in $k^{(n-i) (n+1)} \times k^{i
    (m+1)} - \Psi_\Sigma$ and a point $(x,y,\ell,\hat\ell)$ in
  $\Phi_\Gamma$ lying over $(\ell,\hat \ell)$. Then, the previous
  lemma shows that $x_0 y_0$ is non-zero, so that $(\tilde x,\tilde
  y,\ell,\hat\ell)$ is well-defined, and belongs to $\hat
  \Phi_\Gamma$. This shows that $(\tilde x,\tilde y,\ell,\hat\ell)$ is
  well-defined, and belongs to $\Delta$ if and only if it belongs to
  $V(D)$. The claim then follows from the second item in the previous
  lemma.

  This claim directly implies the first statement in the lemma.  By
  means of Sard's Lemma, the claim implies as well that the projection
  $\pi_2(\Delta)$ is contained in a strict algebraic subset of
  $k^{(n-i) (n+1)} \times k^{i (m+1)}$, so that its Zariski closure
  $\Delta'$ has codimension at least 1.

  Finally, to give an upper bound on the degree of $\Delta'$, it is
  enough to bound the degree of $\Delta$. We know that $\deg(\hat
  \Phi_\Gamma) \le 2^n(d+1)^m$, and the determinant $D$ has degree at
  most $md+n \le (m+n)(d+1)$. The desired bound is yet another
  consequence of Heintz's affine B\'ezout inequality
  \cite{heintz1983definability}.
\end{proof}
We can now conclude the proof of Proposition~\ref{propn:projectiveDegreeExplicit}. 
\begin{proof}[Proof of Proposition~\ref{propn:projectiveDegreeExplicit}, surjective case.]
We define $F$ as
$F=F_1 F_2$, where
\begin{itemize}
\item $F_1 \in k[L,\hat L]$ is any non-zero polynomial of degree at most
  $2^{n}(d+1)^{m+1}$ such that $V(F_1)$ contains $\Psi_\Sigma$ (such a
  polynomial exists, in view of
  Lemma~\ref{lemma:PsiSigmaDegreeBound});
\item $F_2 \in k[L,\hat L]$ is is any non-zero polynomial of degree at most
  $(m+n)2^n(d+1)^{m+1}$ such that $V(F_2)$ contains $\Delta'$ (such a
  polynomial exists, in view of the previous lemma, Lemma \ref{lemma:deltaLemma}).
\end{itemize}
Thus, $F$ has degree at most $(m+n+1)2^n (d+1)^{m+1}$.  The previous
lemma shows that any $(\ell,\hat \ell)$ for which $F$ is nonzero is a
regular value of the restriction of $\pi_2$ to $\Phi_\Gamma$, so by
Lemma~\ref{lemma:fiber}, the fiber $S=\pi_2^{-1}(\ell,\hat \ell) \cap
\Phi_\Gamma$ has cardinality $g_i$; we conclude using Corollary~\ref{coro:oneone}.
\end{proof}

\end{appendices}
\newpage
\bibliographystyle{plain}
\bibliography{refs}
\end{document}